\documentclass{article}

\usepackage{enumerate}
\usepackage{authblk}

\usepackage{amsmath, amssymb}
\usepackage{stmaryrd} 

\usepackage{hyperref}

\usepackage{tikz}

\usepackage{amsthm}

\theoremstyle{plain} 
    \newtheorem{theorem}{Theorem}[section]   
    \newtheorem{lemma}[theorem]{Lemma}
    \newtheorem{corollary}[theorem]{Corollary}
    \newtheorem{proposition}[theorem]{Proposition}
\theoremstyle{definition} 
    \newtheorem{definition}[theorem]{Definition}
    \newtheorem{example}[theorem]{Example}
    \newtheorem{notation}[theorem]{Notation}

\newcommand{\cpro}[2]{{}^{#1\rotatebox{90}{$\scriptscriptstyle{\ominus}$}}\!{#2}}
\newcommand{\rpro}[2]{{}^{#1\scriptscriptstyle{\ominus}}{#2}}

\newcommand{\sym}[1]{\mathrm{Sym}({#1})}
            
\newcommand{\gridpoints}[2]
{\foreach \y in {1,..., #1} \foreach \x in {1,..., #2}
   \fill[black] (\x,\y) circle(5pt) ; }            
            
\newcommand\extrafootertext[1]{%
    \bgroup
    \renewcommand\thefootnote{\fnsymbol{footnote}}%
    \renewcommand\thempfootnote{\fnsymbol{mpfootnote}}%
    \footnotetext[0]{#1}%
    \egroup
}            

\begin{document}

\title{Generalizations of nets and Latin squares}
\date{}
\author{Brian Curtin}
\affil{\small Department of Mathematics and Statistics,
        University of South Florida,\\
        4202 E Fowler Ave,
        Tampa, FL 33620-5700\\
        \href{mailto:bcurtin@usf.edu}{bcurtin@usf.edu}}

\maketitle


\begin{abstract}
We examine combinatorial structures that generalize  $(k,n)$-nets, orthogonal arrays, and mutually orthogonal Latin squares.
By a reticulation we mean a point set and two collections (types) of families of lines such that 
two lines of different types meet in exactly one point and each family of lines partitions the point set. 
The number of  points incident with any line depends only upon the type of the line, and every point is incident with the same number of lines of a given type.  
Each choice of a single line family of each type leads to an arrangement of  the points into a rectangular grid.  
Recording the line containing a given point in the corresponding position of an array gives a generalization of sets of mutually orthogonal Latin squares, dubbed a cooperative system.  
A cooperative system consists of a collection of column-Latin matrices and a collection of row-Latin matrices 
such that each column-Latin matrix is orthogonal to each row-Latin matrix.  
Recording lines which contain a given point as a tuple gives a generalization of certain orthogonal arrays, dubbed semi-orthogonal arrays.
Notions of parastrophy and isotopy for cooperative systems, corresponding to permuting families of lines and lines within each family, are introduced.   
Constructions of small reticulations and three recursive constructions of larger reticulations are given.  
\end{abstract}

         
\extrafootertext{\hskip-4.5ex\scriptsize  {\em 2020 Mathematics Subject Classification.} 05B15, 51E14, 05E18.}
\extrafootertext{\hskip-4.5ex\scriptsize   {\em Key words and phrases.} Bruck net, orthogonal array, Latin square, row-Latin rectangle, isotopy, parastrophy, bireversible automata.}

\section{Introduction}

We introduce combinatorial structures which can be viewed as generalizations of sets of mutually orthogonal Latin squares and the equivalent $(k,n)$-nets, orthogonal arrays, and transversal designs.    
Special cases of the structures under consideration were examined in connection with bireversible automata \cite{CurtinSavchuk:combBRA}: 
We comment on this connection after Definition \ref{def:cooppair}.  
With this application  in mind, we adapt a number of basic results from the theory to Latin squares to this new  setting.
Because they are  easily described and visualized, we begin with a point-line incidence structure analogous to a $(k,n)$-net which we call a \emph{reticulation}.  

For our purposes, a point-line incidence structure consists of 
     a set whose elements are called  \emph{points} and subsets of points called \emph{lines}.  
A point and a line are \emph{incident} when the point is an element of the line.  
By a \emph{family lines} we mean a set of lines.
Two lines are \emph{parallel} when they are equal or have empty intersection.  
The cells of any partition of the points are parallel lines.

\begin{definition}
\label{def:reticulation}
A \emph{reticulation} consists of a set of points and two (multi)sets (``types'') of families of  lines 
satisfying the following axioms. 
\begin{description}
\item[(R-1)] Two lines of different types meet in exactly one point. 
\item[(R-2)] Each family of lines partitions the set of points.
\end{description}
\end{definition}

\begin{example}
\label{ex:grid}
The prototypical reticulation consists of points arranged in an $m\times n$ array. 
The points on each horizontal line comprise one type line,  and the points on each vertical line comprise a line of the other type.   
We refer to the family of all horizontal lines as \emph{weft} and the family of vertical lines as \emph{warp}. 

\smallskip
\centering
$\mathcal{P}=\left\{ 
	\begin{tikzpicture}[scale=.5,baseline=6ex]
	   \gridpoints{3}{4};  
	\end{tikzpicture}\right\}$,
\quad 
$\mathcal{F}_{\mathrm{weft}}=\left\{
	\begin{tikzpicture}[scale=.5,baseline=6ex]
	   \foreach \y in {1,2, 3}
	         \draw[thick, red] (1,\y)--(4,\y);
	  \gridpoints{3}{4};  
	\end{tikzpicture}\right\}$,
\quad
$\mathcal{F}_{\mathrm{warp}}=\left\{
	\begin{tikzpicture}[scale=.5,baseline=6ex]
 	  \foreach \x in {1,2,3, 4}
   	      \draw[thick, blue] (\x,1)--(\x, 3);
 	 \gridpoints{3}{4};   
\end{tikzpicture}\right\}$.
\end{example}

Every finite nondegenerate reticulation can always be drawn on a rectangular grid with one type of line taking one point from each column and other type of line taking one point from each row.  
This inspired the name. We shall continue to refer to the first type as \emph{weft} and the second as \emph{warp}.  
In such a drawing, each family is drawn on a disjoint copy of the same grid (points in the same position are identified, so there is no need to draw the point set alone).

\begin{example}
\label{ex:smallrr}
We offer another small example adding families of lines to the previous example.
Here, points connected by a path lie in the same line.
 
\smallskip
\centering
$\left( \left\{
	\begin{tikzpicture}[scale=.5,baseline=6ex]
	   \foreach \x in {1,2,3, 4}. \foreach \y in {1,2, 3}
	      \draw[ red] (1,\y)--(4,\y);
	   \gridpoints{3}{4};  
	\end{tikzpicture},\,
	\begin{tikzpicture}[scale=.5,baseline=6ex]
	\draw[thick,magenta] (1,2)--(2,2) -- (3, 3) -- (4,1) ;
	\draw[magenta] (1,1)--(2,3)--(3,1)--(4,3); 
	\draw[ultra thick, magenta]  (1,3)--(2,1)--(3, 2)--(4, 2);
	\gridpoints{3}{4};  
	\end{tikzpicture}\right\}, 
\ 
\left\{
	\begin{tikzpicture}[scale=.5,baseline=6ex]
	   \foreach \x in {1,2,3, 4}. \foreach \y in {1,2, 3}
	      \draw[ blue] (\x,1)--(\x, 3); 
  	   \gridpoints{3}{4};  
	\end{tikzpicture},\,
	\begin{tikzpicture}[scale=.5,baseline=6ex]
	   \draw[ultra thick, cyan] (4,3)--(1,2)--(2,1);
	   \draw[cyan] (1,3)--(3,1);
	   \draw[thick, cyan] (2,3)--(4,1);
	   \draw[ultra thick, cyan] (1,1)--(4,2)--(3,3);
    	   \gridpoints{3}{4};  
	\end{tikzpicture}
\right\}\right)$ 
 \end{example}

In Section \ref{sec:degeninfinite}, we comment on reticulations that have an empty component (the degenerate case) and reticulations that have an  infinite component (the infinite case) before removing them from further consideration.
In Section \ref{sec:combreg}, we show that finite nondegenerate reticulations have a regular combinatorial structure.   
We discuss reticulations with no repeated families of lines in Section \ref{sec:repfree}.

Next, we consider various means of representing a reticulation.  
In Section \ref{sec:gridembed}, we show how to place the points on a rectangular grid and encode the reticulation as an array (analogous to certain orthogonal arrays) and as a collection of matrices (analogous to sets of mutually orthogonal Latin squares).   
We refer to such arrays and collections of matrices as svelte semi-orthogonal arrays and cooperative systems, respectively.  
We characterize svelte semi-orthogonal arrays and cooperative systems in Sections \ref{sec:svelteSOA} and \ref{sec:halflatinmatrices}. 
The defining conditions for a cooperative system involve column-Latin and row-Latin matrices which act on one another in a manner which 
encodes bireversible automata (we briefly discuss this connection in Section \ref{sec:halflatinmatrices}).
We describe the  geometric structure dual to a reticulation in Section \ref{sec:semitransversaldesign}, which we call  a semitransversal design.
These results parallel the relationship between $(k,n)$-nets, orthogonal arrays,  sets of mutually orthogonal Latin squares, and transversal designs.   
In fact, these known combinatorial objects give rise to examples of reticulations, svelte semi-orthogonal arrays,  cooperative systems, and semitransversal designs. 

In Sections \ref{sec:parastrophy} and \ref{sec:isotopy}, we introduce the notions of parastrophy and isotopy for svelte semi-orthogonal arrays and  cooperative systems. 
As in the theory of Latin squares, these notions have geometric interpretations.

We next turn to  constructions. In Section \ref{sec:smallparam}, we describe reticulations with small lines (one or two points) or one family of either type.    
Three constructions of  reticulations from smaller ones--prolongation, splicing, and a direct product--are described in Sections \ref{sec:prolongation}, \ref{sec:splice}, and \ref{sec:blockproduct}, respectively.

\section{Two extremes}
\label{sec:degeninfinite}

We briefly discuss two extreme cases of reticulations before putting them aside.

\begin{definition}
\label{def:degeneratereticulation}
A reticulation with an empty point set or an empty family of lines is said to be \emph{degenerate}.
A reticulation that is not degenerate is \emph{nondegenerate}.
\end{definition}

\begin{lemma}
\label{lem:degenretic}
Let $\mathcal{P}$ be a set. For any (possibly empty) (multi)set $\mathcal{F}$ of  partitions of $\mathcal{P}$, 
$(\mathcal{P}, \emptyset, \mathcal{F})$ is a degenerate reticulation.  
\end{lemma}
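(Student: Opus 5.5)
The plan is to check the two axioms of Definition~\ref{def:reticulation} directly for the triple $(\mathcal{P}, \emptyset, \mathcal{F})$, and then to observe that it is degenerate in the sense of Definition~\ref{def:degeneratereticulation}. Here the first type of families of lines (weft) is the empty (multi)set, and the second type (warp) is $\mathcal{F}$. Lines are taken as subsets of $\mathcal{P}$, namely the cells of the partitions in $\mathcal{F}$.

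Axiom (R-1) concerns pairs of lines of different types: one line from a weft family and one from a warp family. Because the weft type contains no families, it contains no lines at all. The universally quantified statement ``any two lines of different types meet in exactly one point'' therefore holds vacuously. No hypothesis on $\mathcal{F}$ is needed here, and $\mathcal{F}$ may even be empty.

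Axiom (R-2) requires each family of lines to partition $\mathcal{P}$. The weft type has no families, so it imposes no condition. Every family of the warp type is a member of $\mathcal{F}$, and by hypothesis is a partition of $\mathcal{P}$. So (R-2) holds, and $(\mathcal{P}, \emptyset, \mathcal{F})$ is a reticulation.

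For degeneracy, I read ``an empty family of lines'' in Definition~\ref{def:degeneratereticulation} as including the case where one type, here the weft type, is the empty (multi)set, i.e.\ a component of the triple is empty. I would make this reading explicit, since otherwise the claim could fail: for nonempty $\mathcal{P}$ and nonempty $\mathcal{F}$, every family would be a genuine partition and the point set would be nonempty.

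The only step needing care is this interpretive one, fixing what ``empty'' refers to. The verification of the axioms is purely vacuous reasoning.
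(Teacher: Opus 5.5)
Your proposal is correct and follows the paper's approach: the paper simply notes that (R-1) and (R-2) hold vacuously, and you check the same two axioms. Your version is a bit more careful in two places. You note that (R-2) for the warp families uses the hypothesis that each member of $\mathcal{F}$ is a partition of $\mathcal{P}$, rather than holding vacuously. You also make explicit that ``degenerate'' refers to the empty weft component, which is the reading the paper intends, since its introduction describes the degenerate case as a reticulation with an empty component.
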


\begin{proof}  
Conditions (R-1) and (R-2) are  satisfied vacuously in this case.
\end{proof}

In Lemma \ref{lem:degenretic}, note that 
$|\mathcal{F}|\leq B_{|\mathcal{P}|}$, the Bell number that counts the number of partitions of $\mathcal{P}$.
The lines of a degenerate reticulation  need not be uniform in size.
We shall see in the next section that the size is uniform in the nondegenerate case.

\begin{example}
In this example, the second, third and fourth families there are lines of size 1 and size 2.
There are five families of lines on three points since $B_3=5$.

\centering
$\left(\{\ \},
\
\left\{ 
\begin{tikzpicture}[scale=.5,baseline=6ex]
\draw[ blue] (1,1)--(1, 3); 
  \gridpoints{3}{1};  
\end{tikzpicture}
,\, 
\begin{tikzpicture}[scale=.5,baseline=6ex]
\draw[ blue] (1,1)--(1, 2); 
\draw[ blue] (.7,3)--(1.3, 3); 
  \gridpoints{3}{1};  
\end{tikzpicture}
,\,
\begin{tikzpicture}[scale=.5,baseline=6ex]
\draw[ blue] (1,2)--(1, 3); 
\draw[ blue] (.7,1)--(1.3, 1); 
  \gridpoints{3}{1};  
\end{tikzpicture}
,\, 
\begin{tikzpicture}[scale=.5,baseline=6ex]
\draw[ blue] (1,1) --(1.5,2)--(1, 3); 
\draw[ blue] (.7,2)--(1.3, 2); 
  \gridpoints{3}{1};  
\end{tikzpicture}
,\,
\begin{tikzpicture}[scale=.5,baseline=6ex]
\draw[ blue] (.7,1)--(1.3, 1); 
\draw[ blue] (.7,2)--(1.3, 2); 
\draw[ blue] (.7,3)--(1.3, 3); 
  \gridpoints{3}{1};  
\end{tikzpicture}
\right\}\right)$.
\end{example}

\begin{definition}
A reticulation is said to be  \emph{finite} whenever its point set and both (multi)sets of families of lines are finite. 
\end{definition}

We offer a simple reticulation with infinite components.

\begin{example}
\label{ex:infinitelines}
Let $\mathcal{P}=\mathbb{R}^2$.  
For each $m\in \mathcal{R}$ and each $b\in \mathcal{R}$,
let $L_{m,b}$ be the line $y=mx+b$, and let $L_{\infty,b}$ be the vertical line $x=b$.  
Let $\mathcal{L}_m=\{L_{m,b} :  b\in \mathbb{R}\}$.   
Let $M$ and $N$ be disjoint subsets of $\mathcal{R}\cup\{\infty\}$.
Let $\mathcal{F}_{\mathrm{weft}}=\{\mathcal{L}_m :  m\in M\}$ and 
       $\mathcal{F}_{\mathrm{warp}}=\{\mathcal{L}_n :  n\in N\}$. 
Any two nonparallel lines meet in a unique point, and every parallel class partitions the plane.
Thus, $(\mathcal{P}, \mathcal{F}_{\mathrm{weft}}, \mathcal{F}_{\mathrm{warp}})$ is a reticulation.

\centering
$\left(\left\{
\begin{tikzpicture}[scale=.45,baseline=3ex]
\foreach \x in {1,1.5,2,2.5,3, 3.5}
  \draw[ red] (\x,0)--(\x,3); 
\end{tikzpicture},\,
\begin{tikzpicture}[scale=.45,baseline=3ex]
\foreach \x in {1,1.5,2,2.5,3}
  \draw[ red] (\x,0)--(\x+1,3); 
\draw[red] (1, 1.5)--(1.5 ,3);
 \draw[ red] (3.5,0)--(4,1.5);
\end{tikzpicture},\,
\begin{tikzpicture}[scale=.45,baseline=6ex]
\foreach \y in {1,1.5,2,2.5,3}
   \draw[red] (0,\y)--(2.5, \y+1); 
\draw[red] (0,3.5)--(1.25, 4);
\draw[red] (1.25,1)--(2.5, 1.5);
\end{tikzpicture}\right\},
\  
\left\{
\begin{tikzpicture}[scale=.45,baseline=6ex]
\foreach \y in {1,1.5,2,2.5,3,3.5,4}
   \draw[ blue] (0,\y)--(2.5, \y); 

\end{tikzpicture},\,
\begin{tikzpicture}[scale=.45,baseline=3ex]
\foreach \y in {1,1.5,2,2.5,3}
   \draw[ blue] (0,\y)--(2.5, \y-1); 
\draw[blue] (1.25,3)--(2.5, 2.5);
\draw[blue] (0,.5)--(1.25, 0);
\end{tikzpicture},\,
\begin{tikzpicture}[scale=.45,baseline=3ex]
\foreach \x in {1,1.5,2,2.5}
   \draw[blue] (\x,0)--(\x-2,3); 
\draw[blue] (-1, 2.25)--(.5 ,0);
\draw[blue] (-1, 1.5)--(0 ,0);
 \draw[blue] (1,3)--(2.5,.75);
\end{tikzpicture}\right\}\right)$.
\end{example}

Example \ref{ex:infinitelines} suggests  the following interpretation of a reticulation:
Taking one family of lines of each type gives a coordinate system for the set of points. 
We develop this further in Section \ref{sec:gridembed}.

Three-webs provide a number of infinite examples as well.  
Three-webs were introduced and studied by Blaschke and his school (for example \cite{MR10451, MR1545014}) in the 1920s and 1930s  
to describe local invariants in  differential geometry arising from configurations of three foliations of the plane  \cite{AKIVIS20001}.     
We can rephrase the  definition from \cite[Sec.~4]{MR35} in our terminology as follows.

\begin{definition} \cite[Sec.~4]{MR35}
A three-web consists of a set of points and three families (types) of lines  which satisfy axioms (R-1) and (R-2). 
\end{definition}

We note that  $(k,n)$-nets (which we recall in Section \ref{sec:combreg}) are finite analogs of three-webs \cite{MR39678}.  
Reticulations are a different sort of analog shaped by their connection to finite bireversible Mealy automata.

Henceforth, we shall consider only finite nondegenerate reticulations.

 \section{Combinatorial regularity}
 \label{sec:combreg}
 
We show that finite nondegenerate reticulations have a regular combinatorial structure.  
The axioms, despite their simplicity, are  generous in this regard.

\begin{theorem}\label{thm:regular}
Let $\mathcal{R}=(\mathcal{P}, \mathcal{F}_{\mathrm{weft}}, \mathcal{F}_{\mathrm{warp}})$ be a finite nondegenerate reticulation.
Let $k=|\mathcal{F}_{\mathrm{weft}}|$ and $\ell=|\mathcal{F}_{\mathrm{warp}}|$.
Suppose that there is a weft line of size $m$ and a warp line of size $n$.  Then the following hold.
\begin{enumerate}
\item   There are $(mn)$-many points.
\item   Every point is in $k$-many weft lines (one from each weft family).
\item   Every point is in $\ell$-many warp lines (one from each warp family).
\item   Every weft line contains $m$-many points.
\item   Every warp line contains $n$-many points.
\item   Each family of weft lines contains $n$-many  parallel  lines.
\item   Each family of warp lines contains $m$-many parallel  lines.
\end{enumerate}
We call the tuple $(m,n,k,\ell)$ the \emph{parameters} of $\mathcal{R}$,
we say that $\mathcal{R}$ is an $(m,n,k,\ell)$-reticulation,
and we say that $\mathcal{R}$ has \emph{order} $m\times n$.
\end{theorem}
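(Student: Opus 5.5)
The whole argument rests on one bijection. Fix a weft family $\mathcal{W}\in\mathcal{F}_{\mathrm{weft}}$ and a warp line $L$. By (R-2), $\mathcal{W}$ partitions $\mathcal{P}$. By (R-1), each line of $\mathcal{W}$ meets $L$ in exactly one point. So the map $\mathcal{W}\to L$ sending a weft line to its intersection with $L$ is well defined. It is injective because distinct lines of $\mathcal{W}$ are disjoint (they are cells of a partition). It is surjective because every point of $L$ lies in some cell of $\mathcal{W}$. Hence $|\mathcal{W}|=|L|$. Symmetrically, for a warp family $\mathcal{V}$ and a weft line $M$, we get $|\mathcal{V}|=|M|$. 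Nondegeneracy guarantees that both types contain at least one family, and finiteness makes these counts finite.

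From the bijection the items follow in order.
\begin{itemize}
\item \textbf{Items 5 and 6.} Every warp line $L$ satisfies $|L|=|\mathcal{W}|$ for any fixed weft family $\mathcal{W}$. So all warp lines have the same size. Since some warp line has size $n$, every warp line has $n$ points, and every weft family has $n$ lines.
\item \textbf{Items 4 and 7.} The same argument with the roles of the types reversed gives every weft line $m$ points and every warp family $m$ lines.
\item \textbf{Item 1.} A weft family partitions $\mathcal{P}$ into $n$ lines of $m$ points each, so $|\mathcal{P}|=mn$.
\item \textbf{Items 2 and 3.} Each family is a partition, so each point lies in exactly one line of each family. Hence every point lies in $k$ weft lines and $\ell$ warp lines, counted with multiplicity when families are repeated as a multiset.
\end{itemize}

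There is one point to watch. To use a warp line $L$ at all we need a warp line to exist, which means a warp family that is nonempty as a set of lines. Nondegeneracy rules out an empty point set and an empty family. A family partitioning a nonempty point set is itself nonempty, and the hypothesis explicitly supplies lines of sizes $m$ and $n$. The rest is bookkeeping, so I expect no real obstacle beyond carefully stating the injectivity/surjectivity argument.

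A further caveat is whether ``weft lines containing a point'' should be counted with multiplicity when two families share a line. I would state item 2 as ``one from each weft family,'' matching the parenthetical in the statement.
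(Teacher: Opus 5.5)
Your proposal is correct and takes essentially the paper's route: the bijection between the lines of a weft family and the points of a warp line, coming from (R-1) and (R-2), gives the uniform line sizes and family sizes, and the partition property then gives the point count and the incidence counts. The only difference is that the paper gets the uniform size of warp lines by double counting all weft lines with multiplicity (its ``$nk=nw$ so $w=k$'' should read $nk=wk$, so $w=n$), whereas you apply the bijection directly to an arbitrary warp line, which is the same idea stated more cleanly.
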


\begin{proof} 
Each family of  lines in $\mathcal{F}_{\mathrm{weft}}$ partitions the set of points by (R-2), so
every point is in exactly one line from each of the $k$-many families of weft lines.  
 Similarly, every point is in $\ell$-many warp lines. Thus, (ii) and (iii) hold.
In any family of weft lines, there is one line for each of the $n$-many points in the given warp line since each weft line intersects the given warp line in exactly one point by (R-1). 
Thus, there are $n$-many lines in each family of weft-lines, giving (vi).  A similar argument gives (vii).  
The total number of weft lines (with multiplicity) is counted by the size $n$ of the given warp line multiplied by the number $k$ of weft lines containing each of the points on the warp line by (iii).   
Making the same count of weft lines starting from a warp line of size $w$ gives $nw$-many weft lines.  Now $nk=nw$ so $w=k$.  
Thus, every warp line has size $n$, as claimed in (v).  Reversing the roles of weft and warp gives (iv).
For (i), count the number of points by multiplying the number $n$ of weft lines multiplied by the size $m$ of each weft line in each parallel family since the weft lines partition the point set (R-2). 
\end{proof} 

\begin{example}
The reticulations in Examples  \ref{ex:grid} and \ref{ex:smallrr}
have respective parameters (3,4,1,1) and (3,4,2,2).
\end{example}

\begin{lemma}
\label{lem:transposereticulation}
If $\mathcal{R}=(\mathcal{P}, \mathcal{F}_{\mathrm{weft}}, \mathcal{F}_{\mathrm{warp}})$ is an $(m,n,k,\ell)$-reticulation,  
then the triple $\mathcal{R}^\dag=(\mathcal{P}, \mathcal{F}_{\mathrm{warp}},  \mathcal{F}_{\mathrm{weft}})$ is an $(n,m,\ell,k)$-reticulation. 
We refer to $\mathcal{R}^\dag$ as the \em{transpose} of $\mathcal{R}$.
\end{lemma}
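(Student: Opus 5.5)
The plan is to check the axioms for the swapped triple directly, and then read off its parameters from Theorem \ref{thm:regular}.

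First, I would show that $\mathcal{R}^\dag$ is a reticulation. Both axioms in Definition \ref{def:reticulation} are symmetric in the two types. Axiom (R-1) only talks about two lines of \emph{different} types, a relation that does not change when the labels are swapped. Axiom (R-2) is a condition on each family separately, so it does not care which type a family carries. Hence $\mathcal{R}^\dag$ satisfies (R-1) and (R-2). Finiteness and nondegeneracy also pass to $\mathcal{R}^\dag$, since the point set and the two (multi)sets of families are literally the same objects.

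Next, I would compute the parameters of $\mathcal{R}^\dag$ by applying Theorem \ref{thm:regular} to it. The weft families of $\mathcal{R}^\dag$ are the $\ell$ warp families of $\mathcal{R}$, and its warp families are the $k$ weft families of $\mathcal{R}$. So the third and fourth parameters are $\ell$ and $k$.

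By parts (iv) and (v) of Theorem \ref{thm:regular} applied to $\mathcal{R}$, every warp line of $\mathcal{R}$ has $n$ points and every weft line has $m$ points. So $\mathcal{R}^\dag$ has a weft line of size $n$ and a warp line of size $m$. Theorem \ref{thm:regular} then says $\mathcal{R}^\dag$ is an $(n,m,\ell,k)$-reticulation.

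The only step that needs any care, and it is minor, is bookkeeping the roles in Theorem \ref{thm:regular}. In that theorem, $m$ is the size of a weft line and $n$ the size of a warp line, so swapping the types swaps $m$ with $n$ and $k$ with $\ell$ consistently. As a sanity check, the point count $mn=nm$ and the family sizes in parts (vi) and (vii) are exchanged correctly.
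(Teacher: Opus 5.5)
Your argument is correct and is essentially the paper's. The paper gives no separate proof of this lemma. It treats it as immediate from the symmetry of (R-1) and (R-2) under exchanging the two types (the same ``reversing the roles of weft and warp'' used in the proof of Theorem~\ref{thm:regular}), with the parameters of $\mathcal{R}^\dag$ then read off from Theorem~\ref{thm:regular}, exactly as you do.
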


We note the following.

\begin{lemma}
Let $(\mathcal{P}, \mathcal{F}_{\mathrm{weft}}, \mathcal{F}_{\mathrm{warp}})$ be a finite reticulation.
\begin{enumerate}
\item Any two points are incident with at most one line in each family of lines.
\item Given a point and a line not incident to the point, 
         there is one line in each family of the other type of line incident with the given point and meeting the given line.
\end{enumerate}
\end{lemma}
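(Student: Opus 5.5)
Both parts follow directly from the axioms (R-1) and (R-2). Neither needs the counting of Theorem \ref{thm:regular}, except to note, if desired, that the lines involved are nonempty.

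For (i), I fix two points $p\neq q$ and a family $\mathcal{L}$ (of either type). By (R-2), $\mathcal{L}$ partitions $\mathcal{P}$, so $p$ lies in exactly one line $L_p\in\mathcal{L}$ and $q$ in exactly one line $L_q\in\mathcal{L}$. A line of $\mathcal{L}$ incident with both points must therefore equal both $L_p$ and $L_q$. Hence there is one such line if $L_p=L_q$ and none otherwise. I will phrase this for families counted with multiplicity, noting that "at most one line in each family" refers to the lines within a single family, which are pairwise distinct or parallel as cells of a partition.

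For (ii), I take a point $p$ and a line $L$ with $p\notin L$. Without loss of generality $L$ is a weft line, since the warp case follows by applying the same argument to the transpose $\mathcal{R}^\dag$ of Lemma \ref{lem:transposereticulation}. I fix any warp family $\mathcal{G}$. By (R-2) there is exactly one line $M\in\mathcal{G}$ incident with $p$. Since $M$ is a warp line and $L$ is a weft line, (R-1) says $M$ meets $L$ in exactly one point. So $M$ is a line of $\mathcal{G}$ through $p$ meeting $L$, and it is the only line of $\mathcal{G}$ through $p$ at all, hence the unique such line. If the finiteness hypothesis should also exclude degenerate cases, I note that the statement is vacuous when the other type is empty. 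In that case "one line in each family" holds trivially because there are no families.

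The only real obstacle is interpretive rather than mathematical. The statement in (ii) should be read as "exactly one line in each family of the other type." I should also remark that a line of the \emph{same} type through $p$ need not meet $L$, which is why the statement restricts to the other type. Otherwise the argument is a direct two-line application of (R-1) and (R-2).
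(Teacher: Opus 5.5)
Your proposal is correct and follows the paper's proof: in (i) you use (R-2), so each point lies in exactly one line of any given family, and in (ii) you take the unique line through the point in a family of the other type and apply (R-1) to see that it meets the given line. Your remarks on the transpose, multiplicities, and degenerate cases are harmless additions, not a different route.
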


\begin{proof} 
A point  is in exactly one line in any family of parallel weft/warp lines.  
A second point is either on this line or not, giving (i).
The line containing the given point meets any given warp/weft line in a unique point.  Thus, (ii) holds.
\end{proof} 

We now compare reticulations to geometric structures known as nets.  

\begin{definition}
An \emph{$(r,k)$-net} or \emph{Bruck net of order $r$ and degree $k$}   is an incidence structure consisting of $k^2$-many points and $rk$-many lines such that 
the lines comprise $r$-many parallel classes, each containing $k$-many lines (so each parallel class partitions the set of points), 
every line contains $k$-many points, and every point is incident with $r$-many lines, one from each parallel class.
\end{definition}

We note that $(r,k)$-nets are a generalization of affine geometries and a special case of partial geometries  \cite{MR1474497,MR782310,MR1434062,MR804798}. 
We immediately observe the following.

\begin{proposition}
\label{prop:net->retic}
Any partition of the lines of an $(m, k+\ell)$-net into parts of size  $k$ and $\ell$
is an $(m, m, k, \ell)$-reticulation.
\end{proposition}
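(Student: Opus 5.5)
Note first a notational clash: in the definition of an $(r,k)$-net, $r$ is the number of parallel classes and $k$ the number of points on a line. So an ``$(m,k+\ell)$-net'' in the statement should be read as a net with $k+\ell$ parallel classes, each line containing $m$ points and $m^2$ points in total. With this reading the conclusion $(m,m,k,\ell)$ is consistent, and I will prove the statement in that form.

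The plan is to check the two axioms of Definition~\ref{def:reticulation} directly, and then read off the parameters from Theorem~\ref{thm:regular}. Fix a net with point set $\mathcal{P}$ and $k+\ell$ parallel classes. Split the classes into a multiset $\mathcal{F}_{\mathrm{weft}}$ of $k$ classes and a multiset $\mathcal{F}_{\mathrm{warp}}$ of $\ell$ classes, and consider $(\mathcal{P},\mathcal{F}_{\mathrm{weft}},\mathcal{F}_{\mathrm{warp}})$.

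\textbf{Axiom (R-2).} This is immediate, since each parallel class of a net partitions the point set.

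\textbf{Axiom (R-1).} Two lines of different types lie in different parallel classes, and we must show they meet in exactly one point. Here I use the standard net axiom that two lines from distinct parallel classes meet in exactly one point. If one insists on deriving this from the counting definition as stated, the argument goes as follows.
\begin{enumerate}
\item Two distinct points lie on at most one common line, which is the usual implicit axiom of a net as a partial linear space.
\item A line $L$ of one class has $m$ points, and the other class $\mathcal{C}$ consists of $m$ lines, each of which is disjoint from, or meets, $L$ in at most one point.
\item Since $\mathcal{C}$ partitions the $m$ points of $L$ into at most $m$ singleton pieces, each of the $m$ lines of $\mathcal{C}$ meets $L$ exactly once.
\end{enumerate}
This is the main (minor) obstacle: the quoted definition omits the ``at most one common line'' axiom, so I would cite the standard definition of Bruck nets for it.

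\textbf{Parameters.} The resulting reticulation is finite, with $m^2$ points. It is nondegenerate provided $k,\ell\ge 1$; I assume this, since the parts are meant to be nonempty. Every line has $m$ points, so a weft line of size $m$ and a warp line of size $m$ exist. Theorem~\ref{thm:regular} then gives parameters $(m,m,k,\ell)$, using $|\mathcal{F}_{\mathrm{weft}}|=k$ and $|\mathcal{F}_{\mathrm{warp}}|=\ell$.
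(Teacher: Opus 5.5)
Your proposal is correct and matches the paper, which gives no argument beyond ``we immediately observe'': (R-2) is the fact that each parallel class partitions the points, and (R-1) is the net axiom that two lines from distinct parallel classes meet in exactly one point. You are right that the paper's net definition swaps the roles of $r$ and $k$ and omits this intersection axiom, and your counting derivation of the axiom from the partial-linear-space condition is sound.
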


It is known that an $(r,k)$-net is equivalent to a  set of $(k-2)$-many mutually orthogonal Latin squares of order $r$ 
\cite[Section 11.1]{MR1096296}, \cite[Subsection 1.4.1]{MR3837138}, \cite[Section 15.3]{MR1644242}, \cite[Section 8.2]{MR3495977}.
We shall develop several results analogous to known results concerning Latin squares, such as parastrophy and isotopy.

\section{Repetition-free reticulations}
\label{sec:repfree}

Reticulations  with parameters $(m,n,2,2)$ arose in connection with bireversible finite automata \cite{CurtinSavchuk:combBRA}.  
The reticulations arising from this application may have repeated families of lines.  Allowing this sort of repetition also side-steps some technical issues around preventing it.  
Furthermore, repetition of this nature arises in connection with orthogonal arrays, an analog of which we connect to reticulations in Section \ref{sec:svelteSOA}.  
The axioms of a reticulation allow multisets of families of lines and have no interaction between families of the same type. This leads to the following.

\begin{proposition}
\label{prop:changemultiplicities}
Given a reticulation, freely changing the multiplicities of the families of parallel lines to any nonnegative value yields another reticulation.
 \end{proposition}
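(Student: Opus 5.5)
The plan is to check the two axioms (R-1) and (R-2) directly for the new triple. Both axioms are statements about individual families or about pairs of lines, and neither involves how many times a family appears in its (multi)set.

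Let $\mathcal{R}=(\mathcal{P},\mathcal{F}_{\mathrm{weft}},\mathcal{F}_{\mathrm{warp}})$ be a reticulation. Form $\mathcal{F}'_{\mathrm{weft}}$ and $\mathcal{F}'_{\mathrm{warp}}$ by assigning each family a new multiplicity in $\mathbb{Z}_{\geq 0}$. The key observation is that the underlying set of distinct families of $\mathcal{F}'_{\mathrm{weft}}$ is contained in that of $\mathcal{F}_{\mathrm{weft}}$, and likewise for warp. A multiplicity of $0$ simply deletes a family.

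For (R-2), take any family in $\mathcal{F}'_{\mathrm{weft}}\cup\mathcal{F}'_{\mathrm{warp}}$. It already occurs in $\mathcal{F}_{\mathrm{weft}}\cup\mathcal{F}_{\mathrm{warp}}$, so it partitions $\mathcal{P}$ because $\mathcal{R}$ satisfies (R-2).

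For (R-1), take a line $L$ in some family of $\mathcal{F}'_{\mathrm{weft}}$ and a line $M$ in some family of $\mathcal{F}'_{\mathrm{warp}}$. Both families occur in the original types, so $L$ is a weft line of $\mathcal{R}$ and $M$ is a warp line of $\mathcal{R}$. Hence $|L\cap M|=1$ by (R-1) for $\mathcal{R}$.

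The only point needing care is the case where setting multiplicities to zero empties a type. The result is then a degenerate reticulation. This is still a reticulation: both axioms hold vacuously, exactly as in Lemma \ref{lem:degenretic}. I expect no real obstacle.
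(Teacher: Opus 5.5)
Your proof is correct and takes the same approach as the paper. The paper gives no formal proof, only the remark that the axioms allow multisets and involve no interaction between families of the same type; your direct check of (R-1) and (R-2) on the surviving families makes exactly that reasoning explicit, and the one nit is that when only one type is emptied, (R-2) for the other type holds by your general argument rather than vacuously, so your final paragraph adds nothing.
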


\begin{definition}
A  reticulation is said to be \emph{repetition-free} whenever each line family appears with multiplicity one. 
The  \emph{foundation} of a reticulation $(\mathcal{P},\mathcal{F}_{\mathrm{weft}},\mathcal{F}_{\mathrm{warp}})$ is the repetition-free  reticulation $(\mathcal{P},\tilde{\mathcal{F}}_{\mathrm{weft}},\tilde{\mathcal{F}}_{\mathrm{warp}})$, 
where $\tilde{\mathcal{F}}_{\mathrm{weft}}$ and $\tilde{\mathcal{F}}_{\mathrm{warp}}$ are the underlying sets on $\mathcal{F}_{\mathrm{weft}}$ and $\mathcal{F}_{\mathrm{warp}}$, respectively.
\end{definition}

For the purposes of construction and counting, one may focus on repetition-free reticulations.  
If this were one's focus, one could choose to make repetition-free a part of the definition.

\begin{example}
\label{ex:repeatedgrid}
 The reticulation in Example \ref{ex:grid} is repetition-free.  It is the foundation of the following reticulation formed by repeating families of lines.
 
 \centering
$\left(\left\{
 \begin{tikzpicture}[scale=.5,baseline=6ex]
\foreach \y in {1,2, 3}
{ \draw[thick, red] (1,\y)--(4,\y); };
\gridpoints{3}{4}
\end{tikzpicture},\,
 \begin{tikzpicture}[scale=.5,baseline=6ex]
\foreach \y in {1,2, 3}
{ \draw[thick, red] (1,\y)--(4,\y); };
\gridpoints{3}{4}
\end{tikzpicture},\,
 \begin{tikzpicture}[scale=.5,baseline=6ex]
\foreach \y in {1,2, 3}
{ \draw[thick, red] (1,\y)--(4,\y); };
\gridpoints{3}{4}
\end{tikzpicture}\right\},
\ 
\left\{
 \begin{tikzpicture}[scale=.5,baseline=6ex]
\foreach \x in {1,2,3, 4}
{ \draw[thick, blue] (\x,1)--(\x, 3); };
\gridpoints{3}{4}
\end{tikzpicture},\,
 \begin{tikzpicture}[scale=.5,baseline=6ex]
\foreach \x in {1,2,3, 4}
{ \draw[thick, blue] (\x,1)--(\x, 3); };
\gridpoints{3}{4}
\end{tikzpicture}\right\}\right)$.
\end{example}

\begin{lemma}
Given a repetition-free $(m,n,k,\ell)$-reticulation $\mathcal{R}$, 
the number of $(m,n,k+w, \ell+x)$-reticulations having $\mathcal{R}$ as its foundation is
\[\binom{k+w-1}{w}\binom{\ell+x-1}{x}.\]
\end{lemma}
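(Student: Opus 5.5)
The plan is to reduce the count to a stars-and-bars problem, one type of lines at a time. First I pin down what determines a reticulation with foundation $\mathcal{R}$. If $\mathcal{R}'=(\mathcal{P},\mathcal{F}'_{\mathrm{weft}},\mathcal{F}'_{\mathrm{warp}})$ has foundation $\mathcal{R}=(\mathcal{P},\tilde{\mathcal{F}}_{\mathrm{weft}},\tilde{\mathcal{F}}_{\mathrm{warp}})$, then the underlying set of $\mathcal{F}'_{\mathrm{weft}}$ is exactly $\tilde{\mathcal{F}}_{\mathrm{weft}}$, and likewise for warp. Hence $\mathcal{R}'$ is determined by a multiplicity function $\mu\colon \tilde{\mathcal{F}}_{\mathrm{weft}}\to\mathbb{Z}_{\geq 1}$ together with a multiplicity function $\nu\colon \tilde{\mathcal{F}}_{\mathrm{warp}}\to\mathbb{Z}_{\geq 1}$. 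Every family must appear at least once, since otherwise the underlying set would shrink and the foundation would change. Conversely, by Proposition \ref{prop:changemultiplicities}, any such pair $(\mu,\nu)$ yields a reticulation. Its foundation is $\mathcal{R}$ because every multiplicity is positive. By Theorem \ref{thm:regular}, its parameters are $(m,n,\sum\mu,\sum\nu)$, since the point set, and hence $m$ and $n$, is unchanged.

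Next I impose the parameter condition $\sum_{F}\mu(F)=k+w$ over the $k$ weft families, and $\sum_{F}\nu(F)=\ell+x$ over the $\ell$ warp families. Writing $\mu(F)=1+a_F$ with $a_F\geq 0$ turns the weft condition into counting nonnegative integer solutions of $\sum a_F=w$ in $k$ variables. By stars and bars there are $\binom{w+k-1}{k-1}=\binom{k+w-1}{w}$ solutions. The warp count is analogous and gives $\binom{\ell+x-1}{x}$. Since the choices of $\mu$ and $\nu$ are independent, the total is the product of the two binomial coefficients.

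The main obstacle is the identification step: I must justify that distinct multiplicity functions give distinct reticulations, and that every reticulation with foundation $\mathcal{R}$ arises this way. This depends on treating the types as multisets of families on the fixed point set $\mathcal{P}$, so that two reticulations are counted as equal exactly when their multisets agree. Under that reading, a multiset is the same thing as its multiplicity function, and the bijection is immediate. I will state this convention explicitly.

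Two degenerate edge cases also need a check. If $k=0$, the reticulation would be degenerate, which the standing assumption of nondegeneracy excludes. It is worth noting anyway that $\binom{w-1}{w}$ then equals $1$ when $w=0$ and $0$ otherwise, which agrees with the count, so the formula is consistent even there.
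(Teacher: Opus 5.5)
Your proposal is correct and takes the same approach as the paper. The paper simply forms the reticulation by choosing $w$ additional weft families and $x$ additional warp families, with repetition, from those of $\mathcal{R}$; this is exactly your stars-and-bars count on multiplicities $1+a_F$, and your identification of reticulations with multiplicity functions makes explicit a convention the paper leaves implicit.
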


\begin{proof} 
Form a reticulation with the desired parameters having $\mathcal{R}$ as its foundation by 
choosing $w$ of the $k$-many weft families with repetition and $x$ of the $\ell$-many warp families from $\mathcal{R}$.  
\end{proof}

\begin{lemma}
Let $\nu(m,n,k,\ell)$ and $\lambda(m,n,k,\ell)$ respectively denote the number of reticulations and repetition-free $(m,n,k,\ell)$-reticulations.
Then 
\[ 
\nu(m,n,k,\ell)= \sum_{w=0}^{k-1} \sum_{x=0}^{\ell-1} \binom{k-1}{w}\binom{\ell-1}{x}  \lambda(m,n, k-w, \ell-x).
\]
\end{lemma}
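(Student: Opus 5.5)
Every reticulation has a unique foundation, so I would count the $(m,n,k,\ell)$-reticulations according to their foundations. The foundation is repetition-free and has the same $m$ and $n$, since forgetting multiplicities changes neither the point set nor the sizes of lines. Its numbers of weft and warp families, call them $k'$ and $\ell'$, satisfy $1\le k'\le k$ and $1\le \ell'\le \ell$. The lower bounds hold because we only consider nondegenerate reticulations, so each type contains at least one family. Write $k'=k-w$ and $\ell'=\ell-x$ with $0\le w\le k-1$ and $0\le x\le \ell-1$. This gives the disjoint decomposition
\[
\nu(m,n,k,\ell)=\sum_{w=0}^{k-1}\sum_{x=0}^{\ell-1}\lambda(m,n,k-w,\ell-x)\cdot N(k-w,\ell-x;k,\ell),
\]
where $N(k',\ell';k,\ell)$ is the number of $(m,n,k,\ell)$-reticulations having a fixed repetition-free $(m,n,k',\ell')$-reticulation as foundation. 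By Proposition~\ref{prop:changemultiplicities}, any assignment of positive multiplicities to the families of a repetition-free reticulation gives a reticulation, and its foundation is the original. So $N$ depends only on $k',\ell',k,\ell$ and not on the particular foundation, which justifies the factorization.

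It remains to compute $N$. A reticulation with a given foundation amounts to choosing positive multiplicities on the $k'$ weft families summing to $k$, and independently on the $\ell'$ warp families summing to $\ell$. The number of compositions of $k$ into $k'$ positive parts is $\binom{k-1}{k'-1}=\binom{k-1}{k-k'}=\binom{k-1}{w}$, and similarly the warp side gives $\binom{\ell-1}{x}$. Substituting these into the decomposition yields the stated formula.

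The main obstacle is bookkeeping rather than mathematics. The preceding lemma counts repetitions as $\binom{k+w-1}{w}$, a multiset count: it adds $w$ further copies chosen with repetition, which does not force every family to appear. Our count instead requires each foundation family to appear at least once, which gives compositions into positive parts. I would state the positive-parts convention explicitly so that it is clear this is the count being used. I would also remark that reticulations are counted as structures with a multiset of families, so permuting identical copies does not produce a different reticulation. That is exactly why compositions, and not ordered words of families, are the right count.
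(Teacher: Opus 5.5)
Your proof is correct and is essentially the argument the paper leaves implicit. The paper also partitions the reticulations by their foundation and counts each fibre, which it gets from the preceding lemma with $k-w$ and $\ell-x$ in place of $k$ and $\ell$, giving $\binom{(k-w)+w-1}{w}\binom{(\ell-x)+x-1}{x}=\binom{k-1}{w}\binom{\ell-1}{x}$. Your remark about a convention clash is unfounded: that lemma adds its $w$ extra copies on top of one copy of each foundation family, and by stars and bars the number $\binom{k'+w-1}{w}$ of such multisets equals the number of compositions of $k'+w$ into $k'$ positive parts, so you could simply cite it.
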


\begin{definition}
We say that a reticulation 
      $(\mathcal{P},\mathcal{F}_{\mathrm{weft}},\mathcal{F}_{\mathrm{warp}})$ 
\emph{contains} another  reticulation 
     $(\tilde{\mathcal{P}},\tilde{\mathcal{F}}_{\mathrm{weft}},\tilde{\mathcal{F}}_{\mathrm{warp}})$
whenever $\tilde{\mathcal{P}}=\mathcal{P}$,     
                $\tilde{\mathcal{F}}_{\mathrm{weft}}\subseteq \mathcal{F}_{\mathrm{weft}}$, and
                $\tilde{\mathcal{F}}_{\mathrm{warp}}\subseteq \mathcal{F}_{\mathrm{warp}}$.
\end{definition}

\begin{lemma}
A repetition-free $(m,n,k,\ell)$-reticulation contains $\binom{k}{w}\binom{\ell}{x}$-many distinct repetition-free $(m,n, w,x)$-reticulations.
\end{lemma}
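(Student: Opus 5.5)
The plan is to exhibit a bijection between the contained repetition-free $(m,n,w,x)$-reticulations and pairs $(A,B)$, where $A$ is a $w$-subset of $\mathcal{F}_{\mathrm{weft}}$ and $B$ is an $x$-subset of $\mathcal{F}_{\mathrm{warp}}$. Counting such pairs then gives $\binom{k}{w}\binom{\ell}{x}$ directly.

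Let $\mathcal{R}=(\mathcal{P},\mathcal{F}_{\mathrm{weft}},\mathcal{F}_{\mathrm{warp}})$ be repetition-free, so that both $\mathcal{F}_{\mathrm{weft}}$ and $\mathcal{F}_{\mathrm{warp}}$ are genuine sets, of sizes $k$ and $\ell$ respectively.

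First, take any $A\subseteq\mathcal{F}_{\mathrm{weft}}$ with $|A|=w$ and any $B\subseteq\mathcal{F}_{\mathrm{warp}}$ with $|B|=x$. The triple $(\mathcal{P},A,B)$ satisfies (R-1), because any two lines of different types in it are lines of different types in $\mathcal{R}$. It satisfies (R-2), because each family in it is a family of $\mathcal{R}$ and therefore partitions $\mathcal{P}$. It is repetition-free, since $A$ and $B$ are subsets of sets. Its parameters are $(m,n,w,x)$: the line sizes $m$ and $n$ are inherited from $\mathcal{R}$ by Theorem \ref{thm:regular}, and the family counts are $w$ and $x$ by construction. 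This holds provided $w,x\ge 1$ so that the triple is nondegenerate; in the degenerate cases $w=0$ or $x=0$ the same construction still yields a reticulation (compare Lemma \ref{lem:degenretic}), and the count is unaffected. By the definition of containment, $\mathcal{R}$ contains $(\mathcal{P},A,B)$.

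Conversely, suppose $\mathcal{R}$ contains a repetition-free $(m,n,w,x)$-reticulation. By the definition of containment its point set is $\mathcal{P}$ and its families are subsets $A\subseteq\mathcal{F}_{\mathrm{weft}}$ and $B\subseteq\mathcal{F}_{\mathrm{warp}}$. Being repetition-free with these parameters forces $|A|=w$ and $|B|=x$. Hence the assignment $(A,B)\mapsto(\mathcal{P},A,B)$ is surjective onto the contained reticulations. It is injective because the triple determines $A$ and $B$.

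The only delicate point is making sure distinct pairs $(A,B)$ really give distinct reticulations and that no multiset issue arises. Both follow from repetition-freeness: the families of $\mathcal{R}$ are pairwise distinct, so subsets correspond exactly to sub-multisets. Multiplying the two independent choices then gives $\binom{k}{w}\binom{\ell}{x}$.
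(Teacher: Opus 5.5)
Your proof is correct and follows the same route as the paper: you choose a $w$-subset of the weft families and an $x$-subset of the warp families, and repetition-freeness guarantees the resulting reticulations are distinct. You also state explicitly that every contained repetition-free $(m,n,w,x)$-reticulation arises this way, which is immediate from the definition of containment and which the paper leaves implicit.
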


\begin{proof} 
Given a repetition-free $(m,n,k,\ell)$-reticulation, any subset of weft lines of size $w$  with any subset of warp lines of size $x$ gives a repetition-free $(m,n, w,x)$-reticulation.  
There are $\binom{k}{w}\binom{\ell}{x}$-many such reticulations, all distinct since the original reticulation is repetition-free.
\end{proof}

As a followup to Proposition \ref{prop:changemultiplicities}, we note the following.

\begin{lemma}
\label{lem:reticbypair}
Let $\mathcal{P}$ be a nonempty set, and let $\mathcal{F}_{\mathrm{weft}}$ and $\mathcal{F}_{\mathrm{warp}}$ be (multi)sets of subsets of $\mathcal{P}$.  
Then $(\mathcal{P}, \mathcal{F}_{\mathrm{weft}}, \mathcal{F}_{\mathrm{warp}})$ is a reticulation if and only if 
 for every $\mathcal{E}\in  \mathcal{F}_{\mathrm{weft}}$  and every $\mathcal{A}\in  \mathcal{F}_{\mathrm{warp}}$, 
$(\mathcal{P}, \{\mathcal{E}\}, \{\mathcal{A}\})$ is a reticulation.
\end{lemma}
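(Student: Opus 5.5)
The plan is to unpack both axioms and observe that each one is a conjunction of conditions, each involving at most one weft family and at most one warp family. The equivalence then follows by matching conjuncts. Throughout, the statement should be read with each element of $\mathcal{F}_{\mathrm{weft}}$ and $\mathcal{F}_{\mathrm{warp}}$ being a family of lines, that is, a set of subsets of $\mathcal{P}$, as in Definition \ref{def:reticulation}.

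For the forward direction, suppose $(\mathcal{P}, \mathcal{F}_{\mathrm{weft}}, \mathcal{F}_{\mathrm{warp}})$ is a reticulation, and fix $\mathcal{E}\in\mathcal{F}_{\mathrm{weft}}$ and $\mathcal{A}\in\mathcal{F}_{\mathrm{warp}}$. A line of $\mathcal{E}$ is a weft line of the original structure and a line of $\mathcal{A}$ is a warp line. Hence (R-1) for the original structure shows that any line of $\mathcal{E}$ meets any line of $\mathcal{A}$ in exactly one point. Axiom (R-2) for the original structure gives that $\mathcal{E}$ and $\mathcal{A}$ each partition $\mathcal{P}$. So $(\mathcal{P},\{\mathcal{E}\},\{\mathcal{A}\})$ satisfies both axioms.

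For the converse, suppose every pair $(\mathcal{P},\{\mathcal{E}\},\{\mathcal{A}\})$ is a reticulation. To check (R-1), take a weft line $L$ and a warp line $M$. Then $L$ lies in some family $\mathcal{E}\in\mathcal{F}_{\mathrm{weft}}$ and $M$ lies in some family $\mathcal{A}\in\mathcal{F}_{\mathrm{warp}}$. Axiom (R-1) for $(\mathcal{P},\{\mathcal{E}\},\{\mathcal{A}\})$ gives $|L\cap M|=1$.

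To check (R-2), take a family, say $\mathcal{E}\in\mathcal{F}_{\mathrm{weft}}$; we need a partner of the other type. This is the only point requiring care. If $\mathcal{F}_{\mathrm{warp}}$ is nonempty, pick any $\mathcal{A}$ in it, and (R-2) for $(\mathcal{P},\{\mathcal{E}\},\{\mathcal{A}\})$ shows that $\mathcal{E}$ partitions $\mathcal{P}$. The same argument with the types exchanged handles warp families.

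The main obstacle is therefore the degenerate case in which one type is empty. If, say, $\mathcal{F}_{\mathrm{warp}}=\emptyset$, the right-hand condition holds vacuously, so weft families are never tested for being partitions. I would handle this by noting that the paper has restricted attention, from Section \ref{sec:combreg} on, to nondegenerate reticulations, so both types are nonempty and the argument above applies. Otherwise the statement must be read with that standing hypothesis; without it, a weft family that fails to partition $\mathcal{P}$ alongside an empty warp type would be a counterexample, consistent with Lemma \ref{lem:degenretic} requiring partitions. I would add a remark to this effect.
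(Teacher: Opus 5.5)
Your argument is correct and is the same direct unpacking of Definition~\ref{def:reticulation} that the paper uses; its proof is just ``Clear from Definition~\ref{def:reticulation},'' and you make it explicit by checking (R-1) one pair of lines at a time and (R-2) one family at a time. Your caveat is also right and is something the paper glosses over: if one type is empty, say $\mathcal{F}_{\mathrm{warp}}=\emptyset$, the right-hand side holds vacuously while a non-partition in $\mathcal{F}_{\mathrm{weft}}$ violates (R-2), so the equivalence needs both types nonempty, which is the standing nondegeneracy convention stated at the end of Section~\ref{sec:degeninfinite}.
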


\begin{proof} 
Clear from Definition \ref{def:reticulation}.
\end{proof}

\begin{example}
Distinct families of lines in a reticulation may contain the same line, e.g.,

{
\centering
$\left(\left\{
\begin{tikzpicture}[scale=.33,baseline=4ex]
\foreach \x in {1,2,3, 4}. \foreach \y in {1,2, 3}
{ \draw[ red] (1,\y)--(4,\y); };
\gridpoints{3}{4}
\end{tikzpicture},\,
\begin{tikzpicture}[scale=.33,baseline=4ex]
\draw[thick,magenta] (1,2)--(2,2) -- (3, 3) -- (4,1) ;
\draw[magenta] (1,1)--(2,3)--(3,1)--(4,3); 
\draw[ultra thick, magenta]  (1,3)--(2,1)--(3, 2)--(4, 2);
\gridpoints{3}{4}
\end{tikzpicture},\,
\begin{tikzpicture}[scale=.33,baseline=4ex]
\draw[ red] (1,3)--(4,3);
\draw[ red] (1,2)--(2,1)--(3,1)--(4,1);
\draw[ red] (1,1)--(2,2)--(3,2)--(4,2);
\gridpoints{3}{4} 
\end{tikzpicture},\,
\begin{tikzpicture}[scale=.33,baseline=4ex]
\draw[ red] (1,3)--(4,3);
\draw[ red] (1,2)--(2,2)--(3,1)--(4,1);
\draw[ red] (1,1)--(2,1)--(3,2)--(4,2);
\gridpoints{3}{4}
\end{tikzpicture},\,
\begin{tikzpicture}[scale=.33,baseline=4ex]
\draw[ red] (1,3)--(4,3);
\draw[ red] (1,2)--(2,2)--(3,2)--(4,1);
\draw[ red] (1,1)--(2,1)--(3,1)--(4,2);
\gridpoints{3}{4}
\end{tikzpicture}
\right\},
\left\{
 \begin{tikzpicture}[scale=.33,baseline=4ex]
\foreach \x in {1,2,3, 4}. \foreach \y in {1,2, 3}
{ \draw[ blue] (\x,1)--(\x, 3); };
\gridpoints{3}{4}
\end{tikzpicture},\,
 \begin{tikzpicture}[scale=.33,baseline=4ex]
 \draw[ blue] (1,1)--(1, 3); 
 \draw[blue] (2,3)--(4,2)--(4,1);
  \draw[ blue] (3,1)--(3, 3); 
  \draw[blue] (4,3)--(2,2)--(2,1);
\gridpoints{3}{4}
\end{tikzpicture},\,
 \begin{tikzpicture}[scale=.33,baseline=4ex]
  \draw[ blue] (1,1)--(1,2)--(2, 3); 
 \draw[ blue] (2,1)--(2,2)--(1, 3); 
   \draw[ blue] (3,1)--(3, 3); 
     \draw[ blue] (4,1)--(4, 3); 
\gridpoints{3}{4}
\end{tikzpicture}
\right\}\right)$.
}

\noindent
This type of repetition of lines is intrinsic to reticulations.   
\end{example}

Repetition-free reticulations can also be defined in terms of hypergraphs. 
The  \emph{complete $m$-uniform hypergraph} on some set of vertices has edge set consisting
of all subsets of vertices of size $m$.
A \emph{perfect matching} in a hypergraph is a subset of hyperedges that partition the vertices.  
Two hypergraphs on a common vertex set are \emph{orthogonal} when each hyperedge of the first meets each hyperedge of the second in a unique vertex.
Fix a vertex set $\mathcal{P}$ of size $mn$, and 
let $M$ and $N$ be the complete $m$- and $n$-uniform hypergraphs on $\mathcal{P}$.
A repetition-free $(m,n,k,\ell)$-reticulation consists of a set $\mathcal{F}_{\hbox{weft}}$ of $k$-many perfect matchings of $M$ 
and a set $\mathcal{F}_{\hbox{warp}}$ of $\ell$-many perfect matchings of $N$ such that each matching in $\mathcal{F}_{\hbox{weft}}$ is orthogonal to each matching in $\mathcal{F}_{\hbox{warp}}$.  

The combinatorial regularity of finite nondegenerate reticulations gives them a flavor different from typical hypergraph problems. 
Pairs of hypergraphs related by conditions similar to those of reticulations (focusing on covering properties rather than matching properties) have been studied as cross-intersecting hypergraphs \cite{MR3603314} and looms \cite{MR4778802}.   
One may also consider families of sets analogous to cross-intersecting sets \cite{MR1357781}.

We note the following.

 \begin{lemma}
 Fix a finite set of points $\mathcal{P}$ and a set $\mathcal{G}$ of partitions of $\mathcal{P}$.
 Then the set $\mathbb{F}_\mathcal{G}$ of sets of partitions $\mathcal{F}$ of $\mathcal{P}$ such that 
 $(\mathcal{P},\mathcal{F}, \mathcal{G})$ is a repetition-free reticulation make up the independent sets of a matroid.  
 \end{lemma}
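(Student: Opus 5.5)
The plan is to show that membership in $\mathbb{F}_\mathcal{G}$ is decided one partition at a time. That makes $\mathbb{F}_\mathcal{G}$ the collection of all subsets of a fixed set of partitions, so the matroid is simply a free matroid on that set, with every other partition of $\mathcal{P}$ a loop. Take the ground set to be the (finite) set $\Pi(\mathcal{P})$ of all partitions of $\mathcal{P}$, and define
\[ S=\{\mathcal{E}\in \Pi(\mathcal{P}) : (\mathcal{P},\{\mathcal{E}\},\{\mathcal{A}\}) \text{ is a reticulation for every } \mathcal{A}\in\mathcal{G}\}. \]

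The key step is to prove $\mathbb{F}_\mathcal{G}=\{\mathcal{F} : \mathcal{F}\subseteq S\}$, and for this I would invoke Lemma \ref{lem:reticbypair}. Since $\mathcal{G}$ is a set (no repetitions) and each $\mathcal{F}$ in question is a set of partitions, the triple $(\mathcal{P},\mathcal{F},\mathcal{G})$ is automatically repetition-free. The only remaining requirement is that it be a reticulation. By Lemma \ref{lem:reticbypair}, this holds exactly when $(\mathcal{P},\{\mathcal{E}\},\{\mathcal{A}\})$ is a reticulation for all $\mathcal{E}\in\mathcal{F}$ and $\mathcal{A}\in\mathcal{G}$, that is, exactly when $\mathcal{F}\subseteq S$. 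If $\mathcal{P}$ is empty, the same conclusion follows from the definitions, since all conditions are then vacuous or trivial. If $\mathcal{G}=\emptyset$, Lemma \ref{lem:degenretic} shows that every set of partitions qualifies, and indeed $S=\Pi(\mathcal{P})$ in that case.

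It then remains to check the independence axioms for the power set of $S$, all of which are immediate. First, $\emptyset\subseteq S$, so the empty set is independent; equivalently, $(\mathcal{P},\emptyset,\mathcal{G})$ is a reticulation by Lemma \ref{lem:degenretic}. Second, subsets of subsets of $S$ are again subsets of $S$, which gives the hereditary property. Third, for exchange, suppose $\mathcal{F}_1,\mathcal{F}_2\subseteq S$ with $|\mathcal{F}_1|<|\mathcal{F}_2|$. Any $\mathcal{E}\in\mathcal{F}_2\setminus\mathcal{F}_1$, which exists by cardinality, gives $\mathcal{F}_1\cup\{\mathcal{E}\}\subseteq S$.

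I expect no real obstacle here. The only point needing care is the bookkeeping about multisets. One must make sure that Lemma \ref{lem:reticbypair} applies verbatim to sets, and that $\mathcal{G}$ being a set is what makes the repetition-free condition automatic. If $\mathcal{G}$ were permitted to have repeated members, $\mathbb{F}_\mathcal{G}$ would be empty and the claim would fail, so the hypothesis that $\mathcal{G}$ is a set should be emphasized.
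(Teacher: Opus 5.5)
Your proof is correct and takes essentially the same route as the paper. Both arguments rest on Lemma \ref{lem:reticbypair}: the paper uses it directly for the exchange axiom, with Lemma \ref{lem:degenretic} for $\emptyset$ and Proposition \ref{prop:changemultiplicities} for heredity, whereas you use it first to show that $\mathbb{F}_{\mathcal{G}}$ is exactly the power set of $S$ (so the matroid is free on $S$, with every other partition a loop), which is slightly more informative and also covers the case $\mathcal{P}=\emptyset$ that Lemma \ref{lem:reticbypair} formally excludes; for $\mathcal{G}=\emptyset$ you need the transposed form of Lemma \ref{lem:degenretic}, which is immediate.
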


 \begin{proof}
By Lemma \ref{lem:degenretic}, $\emptyset\in \mathbb{F}_\mathcal{G}$.  
By Proposition \ref{prop:changemultiplicities}, for each  $\mathcal{F}\in\mathbb{F}_\mathcal{G}$,  any subset of $\mathcal{F}$ is also in $\mathbb{F}_\mathcal{G}$. 
Suppose $\mathcal{F}_1$, $\mathcal{F}_2 \in\mathbb{F}_\mathcal{G}$ with $|\mathcal{F}_1|>|\mathcal{F}_2|$.  
Then there is some $F\in \mathcal{F}_1\setminus \mathcal{F}_2$.
In light of Lemma \ref{lem:reticbypair}, $\mathcal{F}_2\cup\{ F \}\in\mathbb{F}_\mathcal{G}$.
Thus, the lemma holds. 
 \end{proof} 
 
Note that many choices of $\mathcal{G}$ admit no nonempty set partitions $\mathcal{F}$ such that  $(\mathcal{P},\mathcal{F}, \mathcal{G})$ is a reticulation.   
In such a case, $\mathbb{F}_\mathcal{G}=\{\emptyset\}$, giving a degenerate reticulation as in Lemma \ref{lem:degenretic}.
For example, if the partitions in $\mathcal{G}$ do not have the same number of cells (each of the same size), 
then $\mathbb{F}_\mathcal{G} = \{\emptyset\}$ by Theorem \ref{thm:regular}.

%
\section{Representing reticulations}
\label{sec:gridembed}

We describe how to embed a finite nondegenerate reticulation on a grid and then offer two other 
representations involving arrays and matrices that arise from this concrete realization.

\begin{notation}
Let $m$, $n$, $k$, $\ell$ be positive integers.
Write  $[m]=\{1,2, \ldots,m\}$.
Write $[m]^{k}  \times [n]^{\ell}$ to denote the Cartesian product of $k$-many copies of $[m]$ and  $\ell$-many copies of $[n]$.
It will be convenient to write $y\in[m]^{k}  \times [n]^{\ell}$ as $y=(y_1|y_2)$, where $y_1\in [m]^{k}$ (a $k$-tuple of elements of $[m]$) and $y_2\in [n]^{\ell}$ (an $\ell$-tuple of elements of $[n]$).
 Let $ \sym{m}$ denote the symmetric group on $[m]$.
Let $\mathcal{M}_{m\times n}(X)$ denote the set of $m\times n$ 
matrices with entries from the set $X$.   For any matrix $M$, we write $M^\dag$ to denotes its transpose, and we write $M(i,j)$ to denote the $(i,j)$-entry of $M$.
\end{notation}

\begin{definition}\label{def:constructarray}
Let $\mathcal{R}=(\mathcal{P}, \mathcal{F}_{\mathrm{weft}}, \mathcal{F}_{\mathrm{warp}})$ be a 
$(m,n,k,\ell)$-reticulation.
We say that $\mathcal{R}$ is an \emph{ordered reticulation} or \emph{ordered $(m,n,k,\ell)$-reticulation} 
when it is endowed with the following:
\begin{enumerate}[1.]
\item a fixed choice of weft and warp (multi)sets of lines;
\item an enumeration $\mathcal{E}^1$, $\ldots$, $\mathcal{E}^k$ of the $k$-many weft families;
\item an enumeration of the lines in each weft family $\mathcal{E}^u=\{E_i^u\}_{i\in[m]}$ ($u\in[k]$);
\item an enumeration $\mathcal{A}^1$, $\ldots$, $\mathcal{A}^\ell$ of the  $\ell$-many parallel families; and
\item an enumeration of the lines in each warp family $\mathcal{A}^v=\{A_j^v\}_{j\in[n]}$ ($v\in[\ell]$).
\end{enumerate}
\end{definition} 
We now give several representations of an ordered reticulation.

\begin{definition}
\label{def:gridarraymatrices}
Take the ordered $(m,n,k,\ell)$-reticulation of in Definition \ref{def:constructarray}:\\
 $\mathcal{R}=(\mathcal{P}, 
                              \mathcal{F}_{\mathrm{weft}}=\{\mathcal{E}^u=\{E_i^u\}_{i\in[m]}\}_{u\in[k]},
                               \mathcal{F}_{\mathrm{warp}}=\{\mathcal{A}^v=\{A_j^v\}_{j\in[n]}\}_{v\in[\ell]})$.

\begin{enumerate}[1.]  \setcounter{enumi}{5}
\item Let $P_{i,j}$ be the unique point of intersection of $E_i^1$ and $A_j^1$ $(i\in[m], j\in[n])$.
\item For $u\in[k]$, define $C^u\in\mathcal{M}_{m\times n}([m])$ by $C^u(i,j) = q$ when $P_{i,j}\in E^u_q$.
\item For $v\in[\ell]$, define  $R^v\in\mathcal{M}_{m\times n}([n])$ by $R^v(i,j) = r$ when $P_{i,j}\in A^v_r$.
\item Set $\sigma(\mathcal{R})=\{ (C^1(i,j), \ldots, C^k(i,j); R^1(i,j), \ldots, R^\ell(i,j) ): i\in[m], j\in[n]\}$.
                      We refer to $\sigma(\mathcal{R})$ as the \emph{array of $\mathcal{R}$}.

\item Write $\mu(\mathcal{E}^u)=C^u$, $\mu(\mathcal{A}^v)=R^v$, and
                  $\mu(\mathcal{R})=(C^1,C^2, \ldots, C^k; R^1, R^2, \ldots, R^\ell)$.
                       We refer to $\mu(\mathcal{R})$ as the \emph{cooperative matrices of $\mathcal{R}$}. 
\end{enumerate}
\end{definition}

A finite reticulation  can be ordered in many ways.   We discuss how the  arrays and matrices constructed from different orderings in Section \ref{sec:parastrophy} and \ref{sec:isotopy}.

\begin{example}
\label{ex:gridarraymatrix}
Name the points in  Example \ref{ex:smallrr} as $\mathcal{P}=\left\{\addtolength{\arraycolsep}{-0.4em}
\begin{array}{cccc}
a&b&c&d\\[-2pt]
e&f&g&h\\[-2pt]
i&j&k&\ell
\end{array}\right\}.$
\begin{itemize}
\item[2, 3.] Enumerate weft families/lines:\\
$\begin{array}{llll}
\mathcal{E}^1=&\{ E_1^1=\{a,j,g,h\}, &E_2^1=\{ e,f, c,\ell\}, &E_3^1=\{i,b,k,d\} \},\\
\mathcal{E}^2=&\{  E_1^2=\{e,f,g,h\}, &E_2^2=\{i,j,k,\ell\}, &E_3^2=\{a,b,c,d\} \}.
\end{array}$

\item[4, 5.] Enumerate warp families/lines:\\
$\begin{array}{lllll}
\mathcal{A}^1=\!\!&\!\!\{ A_1^1=\{i,c,h\}, \!\!&A_2^1=\{ j,e,d\}, &\!\!A_3^1=\{a, f,k\}, &\!\!A_4^1=\{b,g,\ell\} \},\\
\mathcal{A}^2=\!\!&\!\!\{ A_1^2=\{ a,e,i\}, &\!\!A_2^2=\{ b,f,j\}, &\!\!A_3^2=\{c,g,k\}, &\!\!A_4^2= \{d,h,\ell\}\}.
\end{array}$

\item[6.] Name points according to the grid given by the first line of each type:\\ 
 $\begin{array}{cccc}
P_{1,1} = h, & P_{1,2} = j, & P_{1,3}=a, & P_{1,4}=g,\\
P_{2,1} = c, &P_{2,2} = e, & P_{2,3}=f, & P_{2,4}=\ell,\\
P_{3,1} = i, &P_{3,2} = d, & P_{3,3}=k, & P_{3,4}=b.
\end{array}$\\
Note that the  first family of lines of each type form a grid, as in Example \ref{ex:grid}.

\item[7, 8, 10.] Construct $C$'s and $R$'s

$\begin{array}{cccc}
 h\in E_{\mathbf 1}^1 &  j \in E_{\mathbf 1}^1&a \in E_{\mathbf 1}^1&g\in E_{\mathbf 1}^1\\
 c\in E_{\mathbf 2}^1& e\in E_{\mathbf 2}^1  & f\in E_{\mathbf 2}^1 & \ell\in E_{\mathbf 2}^1\\
 i\in E_{\mathbf 3}^1& d\in E_{\mathbf 3}^1 & k\in E_{\mathbf 3}^1 & b\in E_{\mathbf 3}^1
\end{array}$, 
so 
$C^1 =\left[\addtolength{\arraycolsep}{-0.4em}\begin{array}{cccc}
1&1&1&1\\[-2pt]
2&2&2&2\\[-2pt]
3&3&3&3
\end{array}\right]$.

$\begin{array}{cccc}
h\in E_{\mathbf 1}^2 &  j \in E_{\mathbf 2}^2& a \in E_{\mathbf 3}^2& g\in E_{\mathbf 1}^2\\
c\in E_{\mathbf 3}^2&e\in E_{\mathbf 1}^2   & f\in E_{\mathbf 1}^2 & \ell\in E_{\mathbf 2}^2\\
 i\in E_{\mathbf 2}^2& d\in E_{\mathbf 3}^2  & k\in E_{\mathbf 2}^2 & b\in E_{\mathbf 3}^2
\end{array}$,
so 
$C^2 =\left[\addtolength{\arraycolsep}{-0.4em}\begin{array}{cccc}
1&2&3&1\\[-2pt]
3&1&1&2\\[-2pt]
2&3&2&3
\end{array}\right]$.

$\begin{array}{cccc}
h\in A_{\mathbf 1}^1 &  j \in A_{\mathbf 2}^1& a \in A_{\mathbf 3}^1& g\in A_{\mathbf 4}^1\\
c\in A_{\mathbf 1}^1& e\in A_{\mathbf 2}^1  & f\in A_{\mathbf 3}^1 & \ell\in A_{\mathbf 4}^1\\
i\in A_{\mathbf 1}^1& d\in A_{\mathbf 2}^1  & k\in A_{\mathbf 3}^1 & b\in A_{\mathbf 4}^1
\end{array}$, 
so 
$R^1 =\left[\addtolength{\arraycolsep}{-0.4em}\begin{array}{cccc}
1&2&3&4\\[-2pt]
1&2&3&4\\[-2pt]
1&2&3&4
\end{array}\right]$.

$\begin{array}{cccc}
h\in A_{\mathbf 4}^2 &  j \in A_{\mathbf 2}^2& a \in A_{\mathbf 1}^2& g\in A_{\mathbf 3}^2\\
 c\in A_{\mathbf 3}^2& e\in A_{\mathbf 1}^2  & f\in A_{\mathbf 2}^2 & \ell\in A_{\mathbf 4}^2\\
i\in A_{\mathbf 1}^2& d\in A_{\mathbf 4}^2 & k\in A_{\mathbf 3}^2 & b\in A_{\mathbf 2}^2
\end{array}$, 
so 
$R^2 =\left[\addtolength{\arraycolsep}{-0.4em}\begin{array}{cccc}
4&2&1&3\\[-2pt]
3&1&2&4\\[-2pt]
1&4&3&2
\end{array}\right]$.

\item[9.] Construct $\sigma(\mathcal{R})$\\
$\sigma(\mathcal{R})=\{ (1,1;1,4), (1,2;2,2), (1,3;3,1), (1,1;4,3)$,\\
      $\phantom{X=\{111}(2,3;1,3), (2,1;2,1), (2,1;3,2), (2,2;4,4)$,\\
       $\phantom{X=\{111}(3,2;1,1), (3,3;2,4), (3,2;3,3), (3,3;4,2)\}$.
\end{itemize}
\end{example}

The following justifies our presentation of reticulations on grids  and will be helpful in  further discussions and constructions of reticulations. 

\begin{lemma}
\label{lem:gridembed}
With reference to Definitions \ref{def:constructarray} and \ref{def:gridarraymatrices},
define $\theta:\mathcal{P}\rightarrow [m]\times [n]$  by 
\[ 
       \theta(P_{i,j})= (i,j),
 \] 
and map $\theta$ over lines  and sets of lines:
\begin{align}
   \theta(E^u_q) &= \{\theta(P) :  P\in E^u_q\}, & \theta(\mathcal{E}^u ) &= \{\theta(E^u_q)  :  q\in[m] \},\nonumber\\
   \theta({A}^v_r) &= \{\theta(P) :  P\in A^v_r\},  & \theta(\mathcal{A}^v ) &= \{\theta(A^v_r)  :  r\in[n] \},\nonumber
\end{align}
\[
   \theta(\mathcal{R}) = ( \theta(\mathcal{E}^1), \ldots, \theta(\mathcal{E}^k); 
                                                                            \theta(\mathcal{A}^1), \ldots, \theta(\mathcal{A}^\ell) ).
\]

\begin{enumerate}
\item The map  $\theta$ is a bijection that induces an embedding of $\mathcal{R}$ onto the grid $[m]\times [n]$. 
\item  $C^1(i,j)=i$.  
         So $\theta$ maps $E^1_i$ to the horizontal line $\{ (i,1), \ldots, (i, n)\}$.
\item $R^1(i,j)=j$.
        So $\theta$ maps $A^1_j$ to the vertical line $\{(1,j), \ldots, (m, j)\}$.
\item $C^u(i,j)=q$, where $(i,j)$ is on the line $\theta(E^u_q)$.
           In particular, $\theta$ maps $E^u_q$ to the line consisting of the positions where $C^u$ has entry $q$.
\item $R^v(i,j)=r$, where $(i,j)$ is on the line $\theta(A^v_r)$.
            In particular, $\theta$ maps $A^v_r$ to the line consisting of the positions where $R^v$ has entry $r$.
\end{enumerate}
\end{lemma}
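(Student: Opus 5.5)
The plan is to first make $\theta$ well defined and bijective, then read off items (ii)--(v) directly from the definitions of $C^u$ and $R^v$ in Definition \ref{def:gridarraymatrices}.

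\textbf{Step 1: $\theta$ is a bijection.} For each $(i,j)\in[m]\times[n]$, the lines $E_i^1$ and $A_j^1$ are of different types. By (R-1) they meet in exactly one point, so $P_{i,j}$ is well defined. Every point $P$ lies in exactly one line $E_i^1$ of $\mathcal{E}^1$ and in exactly one line $A_j^1$ of $\mathcal{A}^1$, by (R-2). Hence $P=P_{i,j}$ for a unique pair $(i,j)$, and the assignment $P_{i,j}\mapsto(i,j)$ is a well-defined bijection $\mathcal{P}\to[m]\times[n]$. This is consistent with the count $|\mathcal{P}|=mn$ in Theorem \ref{thm:regular}(i), and with Theorem \ref{thm:regular}(vi),(vii) giving $m$ lines in each weft family and $n$ in each warp family (so the indices $i\in[m]$, $j\in[n]$ are correct). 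Since $\theta$ is a bijection, transporting every line and family along $\theta$ preserves incidence, intersections and partitions. Therefore $\theta(\mathcal{R})$ satisfies (R-1) and (R-2) on the point set $[m]\times[n]$ and is isomorphic to $\mathcal{R}$. This is the ``embedding onto the grid'' claimed in (i).

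\textbf{Step 2: the first families, items (ii) and (iii).} By construction $P_{i,j}\in E_i^1$, so $C^1(i,j)=i$. Thus $\theta(E^1_i)$ consists of the points $(i,j)$ with $j\in[n]$, which is the horizontal line. There are exactly $n$ such points, in agreement with Theorem \ref{thm:regular}. Symmetrically, $P_{i,j}\in A_j^1$ gives $R^1(i,j)=j$, and $\theta(A_j^1)$ is the vertical line $\{(1,j),\ldots,(m,j)\}$.

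\textbf{Step 3: general families, items (iv) and (v).} By definition $C^u(i,j)=q$ exactly when $P_{i,j}\in E^u_q$, and this is equivalent to $(i,j)=\theta(P_{i,j})\in\theta(E^u_q)$. Since $\mathcal{E}^u$ partitions $\mathcal{P}$, such a $q$ is unique, so $C^u$ is well defined. The preimage of $q$ under $C^u$ is then exactly $\theta(E^u_q)$. The warp case is the same argument with $R^v$ and $A^v_r$.

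None of the steps poses a real obstacle. The only point needing care is the well-definedness in Step 1, namely that each point arises as exactly one $P_{i,j}$, and this uses both (R-1) and (R-2) for the first family of each type.
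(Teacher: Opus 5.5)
Your proof is correct and follows essentially the paper's route: (R-1) makes $P_{i,j}$ well defined, the partition property of $\mathcal{E}^1$ and $\mathcal{A}^1$ makes $\theta$ a bijection (you invoke (R-2) directly where the paper appeals to the line counts and sizes), and (ii)--(v) are read off from the definitions of $C^u$ and $R^v$. One caution: your parenthetical consistency checks misquote Theorem~\ref{thm:regular}, which as stated gives $n$ lines per weft family and weft lines of size $m$ (the reverse of the indexing in Definition~\ref{def:constructarray}, a convention clash within the paper itself), but nothing in your argument depends on those remarks.
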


\begin{proof} 
Parts (i)--(iii) follow from the fact that there are $n$-many weft lines of size $m$ in $\mathcal{E}^1$ and $m$-many warp lines of size $n$ in $\mathcal{A}^1$, and 
      each line of one type meets each line of the other type in exactly one point.  
The indices on $P_{i,j}$ are defined by its incidence with line $i$ of $\mathcal{E}^1$ and line $j$ of $\mathcal{A}^1$.  Parts (iv) and (v) follow from  the definitions of $C^u$ and $R^v$.
\end{proof} 

Lemma \ref{lem:gridembed} gives the following deconstruction the construction. 
Given a reticulation, a chosen weft family and a chosen warp family are sorted into horizontal and vertical lines, respectively,  to arrange the point set on $[m]\times [n]$. 
From each  $C^u$ and $R^v$ we read the corresponding weft family and warp family, respectively,
   by the rule that the positions of the cooperative matrices which hold the same value are on the same line of the corresponding family.  
We examine the conditions on arrays that permit this construction of a reticulation in the next section.

\begin{example}
\label{ex:newgridembedding}
The embedding of the reticulation found in Example \ref{ex:gridarraymatrix} is 

\centering
$\left(\left\{\begin{tikzpicture}[scale=.4,baseline=4.7ex]
\foreach \x in {1,2,3, 4}. \foreach \y in {1,2, 3}
{ \draw[ red] (1,\y)--(4,\y); };
\gridpoints{3}{4}
\end{tikzpicture},\, 
\begin{tikzpicture}[scale=.4,baseline=4.7ex]
\draw[magenta] (1,3)--(2,2)--(3, 2 )--(4,3);
\draw[ultra thick, magenta] (1,1)--(2,3)--(3,1)--(4,2);
\draw[thick, magenta] (1,2)--(2,1)--(3,3)--(4,1);
\foreach \x in {1,2,3, 4} \foreach \y in {1,2, 3}
   \fill[black] (\x,\y) circle(5pt) ; 
\end{tikzpicture}\right\},
\ 
\left\{ \begin{tikzpicture}[scale=.4,baseline=4.7ex]
\foreach \x in {1,2,3, 4}. \foreach \y in {1,2, 3}
{ \draw[ blue] (\x,1)--(\x, 3); };
\gridpoints{3}{4}
\end{tikzpicture}
,\,
\begin{tikzpicture}[scale=.4,baseline=4.7ex]
\draw[ cyan] (1,1)--(2,2)--(3,3);
\draw[cyan] (2,3)--(3,2)--(4,1);
\draw[thick, cyan] (1,2)--(3,1)--(4,3);
\draw[ultra thick, cyan] (1,3)--(2,1)--(4,2);
\gridpoints{3}{4}
\end{tikzpicture}
\right\}\right)$.
 \end{example}

Different choices of orderings in Definition \ref{def:constructarray} may lead to 
different embeddings onto the grid.  Indeed, in Example \ref{ex:gridarraymatrix} we started 
with one embedding onto the grid and ended with the embedding shown in Example \ref{ex:newgridembedding}.
One may omit $C^1$ and $R^1$ when listing the cooperative matrices as they simply establish the grid lines and have fixed values.
Had we used the natural order from the grid embedding drawn in Example \ref{ex:smallrr}, we could have immediately read that 
\[ \addtolength{\arraycolsep}{-0.4em}
C^2=\begin{bmatrix} 1&3&2&3\\[-2pt] 2&2&1&1\\[-2pt] 3&1&3&2 \end{bmatrix}
\qquad\hbox{and}\qquad
R^2=\begin{bmatrix} 1&2&3&4\\[-2pt] 4&1&2&3\\[-2pt] 3&4&1&2 \end{bmatrix}.\]
We elaborate on this observation in Section \ref{sec:parastrophy}.

For the purposes of bookkeeping, one may organize components of a reticulation.  
We will discuss the impact of changing the orderings in Sections \ref{sec:parastrophy} and \ref{sec:isotopy}.

\begin{definition}
Given an $(m,n,k,\ell)$-reticulation, an \emph{initial ordering} consists of a choice of weft and warp families $\mathcal{E}^1$ and $\mathcal{A}^1$ and 
orderings of the lines in these families $\mathcal{E}^1=\{E^1_1, \ldots, E^1_m\}$ and  $\mathcal{A}^1= \{A^1_1, \ldots, A^1_n\}$. 
The \emph{standard ordering relative to the initial ordering} on the lines in the other $\mathcal{E}^u$ and $\mathcal{A}^v$ (whose order is yet to be fixed) is given by the rule 
that $E^u_i$ is the line containing $P_{i,1}$ $(i\in[m])$ and $A^v_j$ is the line containing $P_{1,j}$ $(j\in[n])$.  
The \emph{standard orderings of the $\mathcal{E}^u$ and $\mathcal{A}^v$ relative to the initial ordering} is the ordering 
     for which the corresponding $C^u$ are in row-lexicographic order and the corresponding $R^v$ are in  column-lexicographic order.
\end{definition}

Note that an initial ordering can be made in $k\cdot \ell\cdot m! \cdot n!$ ways. 
Relative to a standard ordering, the first column of all $C^u$ is $[1 \, 2 \, \ldots \, m]^\dag$ and the first row of all $A^v$ is $[1\, 2\, \ldots \, n]$. 
In  row-lexicographic order  $C^1=H$ is always first, and in column-lexicographic order  $R^1=V$ is always first.  One may also swap the roles of weft and warp to make $m\leq n$.

\section{Svelte semi-orthogonal arrays}
\label{sec:svelteSOA}

We examine the array $\sigma(\mathcal{R})$ of an ordered reticulation $\mathcal{R}$.  
We introduce a combinatorial structure (svelte semi-orthogonal arrays) which  characterize the arrays of ordered reticulations.  
This is analogous to the relationship between $(r,k)$-nets and certain orthogonal arrays (see \cite[Section 11.1]{MR3495977}).

\begin{definition}
By a \emph{svelte  semi-orthogonal array with parameters $(m,n,k,\ell)$}, we mean a subset 
$\mathcal{Y}$ of $[m]^{k}  \times [n]^{\ell}$ with the property that for all $u\in [k]$ and $v\in [\ell]$, 
every element of $[m]\times[n]$ appears exactly once among the pairs $(y_1(u),y_2(v))$ as $(y_1|y_2)$ runs over  $\mathcal{Y}$.
\end{definition}

A svelte semi-orthogonal array with parameters $(m,n,k,\ell)$ contains $(mn)$-many $(k+\ell)$-tuples.
We introduced the special case with parameters $(m,n,2,2)$ in \cite{CurtinSavchuk:combBRA} in connection with bireversible finite automata (with components listed in a different order).  
We called such an instance a \emph{narrow semi-orthogonal array}.  
The adjectives ``svelte'' and ``narrow'' convey the understanding that a more general notion of semi-orthogonal array could be formulated and studied; 
however, doing so lies beyond the scope of the present work.

\begin{theorem}
With reference to Definition \ref{def:constructarray}, 
$\sigma(\mathcal{R})$ is a svelte semi-orthogonal array with parameters $(m,n,k,\ell)$.
\end{theorem}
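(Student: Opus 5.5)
The plan is to check the defining property of a svelte semi-orthogonal array directly, using the grid embedding of Lemma \ref{lem:gridembed}. By Definition \ref{def:gridarraymatrices}, $\sigma(\mathcal{R})$ is indexed by positions $(i,j)\in[m]\times[n]$. Each position contributes the tuple $y^{(i,j)}=(C^1(i,j),\ldots,C^k(i,j);R^1(i,j),\ldots,R^\ell(i,j))$. Since $C^u$ takes values in $[m]$ and $R^v$ in $[n]$, this is an element of $[m]^k\times[n]^\ell$. So the only thing to prove is that, for each fixed $u\in[k]$ and $v\in[\ell]$, the map $(i,j)\mapsto (C^u(i,j),R^v(i,j))$ is a bijection $[m]\times[n]\to[m]\times[n]$.

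I will treat $\sigma(\mathcal{R})$ as a list indexed by positions. This is the natural reading of ``appears exactly once as $(y_1|y_2)$ runs over $\mathcal{Y}$''. Once the bijection is established, the $mn$ tuples are pairwise distinct anyway (they already differ in their $(u,v)$ coordinates), so the set and the list agree and $|\sigma(\mathcal{R})|=mn$.

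For the bijection, fix $u,v$ and a target pair $(q,r)\in[m]\times[n]$. By (R-1), the weft line $E^u_q$ and the warp line $A^v_r$ meet in exactly one point $P$. By Lemma \ref{lem:gridembed}(i), $\theta$ is a bijection, so $P=P_{i,j}$ for a unique position $(i,j)$. By parts (iv) and (v) of that lemma, or directly from items 7 and 8 of Definition \ref{def:gridarraymatrices}, we have $C^u(i,j)=q$ and $R^v(i,j)=r$. This gives surjectivity.

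For injectivity, suppose $(C^u(i,j),R^v(i,j))=(C^u(i',j'),R^v(i',j'))=(q,r)$. Then $P_{i,j}$ and $P_{i',j'}$ both lie in $E^u_q\cap A^v_r$. This intersection is a single point by (R-1), so $P_{i,j}=P_{i',j'}$, and injectivity of $\theta$ gives $(i,j)=(i',j')$. Alternatively, injectivity follows from surjectivity by counting, since domain and codomain both have size $mn$ by Theorem \ref{thm:regular}(i).

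There is no serious obstacle here. The one point needing care is that $C^u(i,j)$ and $R^v(i,j)$ are well defined. This holds because each family partitions $\mathcal{P}$ by (R-2), so $P_{i,j}$ lies in exactly one line of $\mathcal{E}^u$ and one of $\mathcal{A}^v$. The values land in $[m]$ and $[n]$ respectively because Theorem \ref{thm:regular}(vi),(vii) give the family sizes that the enumeration in Definition \ref{def:constructarray} uses.
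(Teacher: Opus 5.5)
Your proof is correct and follows the same route as the paper's: both read off $(C^u(i,j),R^v(i,j))=(q,r)$ from $P_{i,j}\in E^u_q\cap A^v_r$, and both conclude that this pair runs over $[m]\times[n]$ as $(i,j)$ does. Your version is more complete than the paper's. You invoke (R-1) explicitly for surjectivity and injectivity, whereas the paper's argument mentions only that each point lies in one line of each family and leaves the bijectivity implicit. You also check that distinct positions give distinct tuples, so treating $\sigma(\mathcal{R})$ as a set loses nothing.
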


\begin{proof} 
By  construction, $\sigma(\mathcal{R})\subset [m]^{k}  \times [n]^{\ell}$.  
The construction of $\sigma(\mathcal{R})$ gives that 
if the lines $E^u_q$ and $A^v_r$ both contain the point $P_{i,j}$, then $(i=y_1(1), y_1(u), j=y_2(1),y_2(v))=(i,q,j,r)$. 
Every point is in one weft line from family $u$ and one warp line from family $v$, so as $P_{i,j}$ runs over the points
(that is, as $(i,j)$ runs over $[m]\times [n]$) the pair $(y_1(u),y_2(v))=(q,r)$ also runs over $[m]\times[n]$.
\end{proof} 

\begin{theorem}
\label{thm:TSOA-R}
Let $\mathcal{Y}$ be a svelte semi-orthogonal array with parameters $(m,n,k,\ell)$.
\begin{enumerate}
\item For each $u\in [k]$, $v\in[\ell]$ and for each $(q,r)\in[m]\times[n]$, 
         there is a unique $(y_1|y_2)\in \mathcal{Y}$ such that $(y_1(u), y_2(v))=(q,r)$.
\item The $(\mathcal{P},\mathcal{F}_{\mathrm{weft}}, \mathcal{F}_{\mathrm{warp}} )$ 
          constructed as follows is an $(m,n,k,\ell)$-reticulation.
\begin{enumerate}
\item Let $\mathcal{P}=[m]\times [n]$.
\item For each $u\in[k]$, define a family $\mathcal{E}^u=\{E^u_q :  q\in [m]\}$ of weft lines, \\
         where $E^u_q = \{  (y_1(1),y_2(1))  :  (y_1|y_2)\in \mathcal{Y},\,  y_1(u) = q  \}$. 
         Let $\mathcal{F}_{\mathrm{weft}} = \{\mathcal{E}^u :  u\in [k] \}$.
\item For each $v\in [\ell]$, define a family $\mathcal{A}^v=\{A^v_r :  r\in[n]\}$ of warp lines,\\
         where $A^v_r  = \{ (y_1(1),y_2(1))  :  (y_1|y_2)\in \mathcal{Y},\, y_2(v) = r \}$. 
         Let $\mathcal{F}_{\mathrm{warp}} = \{\mathcal{A}^v :  v\in [\ell] \}$.     
\end{enumerate}
\item With reference to Definition \ref{def:constructarray}, if $\mathcal{Y}=\sigma(\mathcal{R})$, 
      then the reticulation constructed  from $\mathcal{Y}$ in {\textup{(ii)}} is the embedding $\theta(\mathcal{R})$ of $\mathcal{R}$ onto the grid $[m]\times [n]$ of Lemma \ref{lem:gridembed}.
\end{enumerate}
\end{theorem}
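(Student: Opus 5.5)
Part (i) is a restatement of the definition of a svelte semi-orthogonal array. For fixed $u,v$, the map $\mathcal{Y}\to[m]\times[n]$ given by $(y_1|y_2)\mapsto(y_1(u),y_2(v))$ hits every pair exactly once. It is therefore a bijection, so for each $(q,r)$ there is exactly one preimage. Two consequences follow at once. First, $|\mathcal{Y}|=mn$. Second, taking $u=v=1$, the map $\pi:(y_1|y_2)\mapsto(y_1(1),y_2(1))$ is a bijection from $\mathcal{Y}$ onto $\mathcal{P}=[m]\times[n]$. This bijection $\pi$ is the key tool for the rest of the proof.

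For (ii) I would verify (R-2) and (R-1) by transporting statements about $\mathcal{Y}$ through $\pi$.

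For (R-2), fix $u$. The sets $\{(y_1|y_2)\in\mathcal{Y}: y_1(u)=q\}$, $q\in[m]$, partition $\mathcal{Y}$. Applying the bijection $\pi$, their images $E^u_q$ partition $\mathcal{P}$. Each $E^u_q$ is nonempty: by (i) with any $v$, it contains exactly $n$ points, one for each value $r$ of $y_2(v)$. The warp families are handled the same way, and each $A^v_r$ has $m$ points.

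For (R-1), take $E^u_q$ and $A^v_r$. A point of $\mathcal{P}$ lies in both exactly when its unique preimage under $\pi$ satisfies $y_1(u)=q$ and $y_2(v)=r$. By (i) there is exactly one such element of $\mathcal{Y}$, so the two lines meet in exactly one point. The structure is finite and nondegenerate, so by Theorem~\ref{thm:regular} the parameters are $(m,n,k,\ell)$. This matches the sizes already computed.

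The only real subtlety is distinguishing lines from families, since families may coincide, which is allowed as multisets. It is also worth checking that $E^u_q\neq E^u_{q'}$ for $q\ne q'$. This holds because the two sets are disjoint and nonempty, so each family has exactly $m$ lines.

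For (iii), suppose $\mathcal{Y}=\sigma(\mathcal{R})$. Its elements are indexed by the points $P_{i,j}$, and the tuple attached to $P_{i,j}$ has $y_1(1)=C^1(i,j)=i$ and $y_2(1)=R^1(i,j)=j$ by Lemma~\ref{lem:gridembed}(ii),(iii). Hence $\pi$ sends the tuple of $P_{i,j}$ to $(i,j)=\theta(P_{i,j})$. The constructed line $E^u_q$ is therefore $\{(i,j): C^u(i,j)=q\}$, and by Lemma~\ref{lem:gridembed}(iv) this equals $\theta(E^u_q)$. Likewise $A^v_r=\theta(A^v_r)$ by Lemma~\ref{lem:gridembed}(v). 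So the reticulation built from $\mathcal{Y}$ coincides family by family with $\theta(\mathcal{R})$.

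The one point needing care is that $\sigma(\mathcal{R})$ is defined as a set. Distinct points could in principle give equal tuples and collapse the set, but this cannot happen because the first weft and warp coordinates $(i,j)$ already differ. I expect no genuine obstacle; the main care is the bookkeeping of identifying $\mathcal{Y}$ with $\mathcal{P}$ via $\pi$.
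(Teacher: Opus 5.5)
Your proof is correct and follows the paper's route: (i) comes straight from the definition, (R-1) and (R-2) come from pulling everything back to $\mathcal{Y}$ through $(y_1|y_2)\mapsto(y_1(1),y_2(1))$, and (iii) comes from unwinding Lemma~\ref{lem:gridembed}; making that bijection explicit is in fact cleaner than the paper's rather muddled disjointness step for (R-2). One minor caution: the identification of the parameters as $(m,n,k,\ell)$ really rests on your direct count ($m$ weft lines of size $n$ per family, and $n$ warp lines of size $m$ per family), because Theorem~\ref{thm:regular} as literally worded assigns the line sizes $m$ and $n$ the other way round, so citing it verbatim would yield $(n,m,k,\ell)$.
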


\begin{proof} 
Since $\mathcal{Y}$ is a svelte semi-orthogonal array, every element of $[m]\times[n]$, in particular $(q,r)$, occurs once among pairs  $(y_1(u), y_2(v))$ as $(y_1|y_2)$ runs over $\mathcal{Y}$. 
Thus, (i) holds.   
Now, (i) implies that there is a unique $(y_1|y_2)\in \mathcal{Y}$ in both $E^u_q$ and $A^v_r$. 

For the unique $(y_1|y_2)\in \mathcal{Y}$ with $y_1(u)=q$ and $y_2(v)=r$, we have $(y_1(1),y_2(1))\in E^u_q\cap A^v_r$.
If $(y'_1(1),y'_2(1))\in \mathcal{Y}$ is also in $E^u_q\cap A^v_r$, then $y'_1(u)=q$ and $y'_2(v)=r$, so $(y_1'|y_2')=(y_1|y_2)$ by (i).  
That is, (R-1) holds.
By (i), for each $(i,j)\in[m]\times [n]$, there is a unique $(y_1|y_2)\in \mathcal{Y}$ with $y_1(1)=i$, $y_2(1)=j$.  Say $y_1(u)=q$ and $y_2(v)=r$. 
Now $(i,j)\in E^u_q$ and $(i,j)\in A^v_r$, so all elements of $[m]\times [n]$ are in one of the lines of $\mathcal{E}^u$ and in one of the lines of $\mathcal{A}^v$.
Note that $(y_1|y_2)$ is the unique element of $Y$ with $y_1(u)=q$ and $y_2(1)=j$, so no other element $(y_1'|y_2')$ of $\mathcal{Y}$ with $(y_1'(1),y_2'(2))\in E^u_q$ has $y_2'(1)=j$.  
Thus, every element of $[m]\times [n]$ is in at most one line of $\mathcal{E}^u$.  Thus,  $\mathcal{E}^u$ is a partition of $[m]\times [n]$.  Similarly, 
$\mathcal{A}^v$ is a partition of $[m]\times [n]$.  Thus, (R-2) holds, giving (ii).
Part (iii) is straightforward.
\end{proof} 

From the preceding theorem, we get the following.

\begin{corollary}
The map $\sigma$ from the set of ordered $(m,n,k,\ell)$-reticulations of the grid to svelte semi-orthogonal arrays with parameters $(m,n,k,\ell)$ is invertible. 
\end{corollary}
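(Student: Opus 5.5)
To show $\sigma$ is invertible, I will write down an explicit inverse using Theorem \ref{thm:TSOA-R}(ii) and check both composites. First I fix what "ordered $(m,n,k,\ell)$-reticulations of the grid" means. These are ordered reticulations with point set $[m]\times[n]$ whose first weft family $\mathcal{E}^1$ is the set of horizontal lines, with $E^1_i$ the $i$th row, and whose first warp family $\mathcal{A}^1$ is the set of vertical lines, with $A^1_j$ the $j$th column. For such an object, $P_{i,j}=(i,j)$, so the embedding $\theta$ of Lemma \ref{lem:gridembed} is the identity and $\theta(\mathcal{R})=\mathcal{R}$.

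Next I define $\tau$ from svelte semi-orthogonal arrays to ordered reticulations. Given $\mathcal{Y}$, let $\tau(\mathcal{Y})$ be the reticulation built in Theorem \ref{thm:TSOA-R}(ii), ordered by the indices $u,q$ and $v,r$ already present there. It remains to check that $\tau(\mathcal{Y})$ is a reticulation of the grid. By definition, $E^1_q=\{(y_1(1),y_2(1)) : y_1(1)=q\}$, so every point of this line has first coordinate $q$. Taking $u=v=1$ in Theorem \ref{thm:TSOA-R}(i), each $(q,r)$ arises exactly once as $(y_1(1),y_2(1))$. Hence $E^1_q$ is exactly the $q$th row. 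In the same way, $A^1_r$ is the $r$th column.

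The two composites are then handled separately.

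\emph{The composite $\tau\circ\sigma$ is the identity.} If $\mathcal{R}$ is an ordered reticulation of the grid and $\mathcal{Y}=\sigma(\mathcal{R})$, then Theorem \ref{thm:TSOA-R}(iii) says $\tau(\mathcal{Y})=\theta(\mathcal{R})=\mathcal{R}$. Both the lines and their enumerations match, since each line is indexed by the same $q$ or $r$ on both sides.

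\emph{The composite $\sigma\circ\tau$ is the identity.} Let $\mathcal{R}=\tau(\mathcal{Y})$. Then $P_{i,j}=(i,j)$. By Theorem \ref{thm:TSOA-R}(i) there is a unique $(y_1|y_2)\in\mathcal{Y}$ with $y_1(1)=i$ and $y_2(1)=j$. From the definition of $E^u_q$, $C^u(i,j)=y_1(u)$, and from the definition of $A^v_r$, $R^v(i,j)=y_2(v)$. So the tuple of $\sigma(\mathcal{R})$ at position $(i,j)$ is exactly this $(y_1|y_2)$. As $(i,j)$ runs over the grid, these tuples exhaust $\mathcal{Y}$, because $|\mathcal{Y}|=mn$ and the pairs $(y_1(1),y_2(1))$ are distinct. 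Hence $\sigma(\mathcal{R})=\mathcal{Y}$.

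The only delicate step is the bookkeeping about what "of the grid" means. The normalization $P_{i,j}=(i,j)$ is needed for injectivity; without it, relabeling the points would change the reticulation but not its array. Once that is settled, every remaining step is a direct application of Theorem \ref{thm:TSOA-R}.
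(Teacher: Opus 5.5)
Your proposal is correct and is essentially the paper's argument: the corollary is read off from Theorem~\ref{thm:TSOA-R}. Part (ii) supplies the inverse, and part (iii) gives $\tau\circ\sigma=\mathrm{id}$ once $\theta$ is the identity on grid reticulations. Your explicit checks that $\tau(\mathcal{Y})$ has the rows and columns as its first families, and that $\sigma\circ\tau=\mathrm{id}$, fill in details the paper leaves implicit.
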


We compare svelte semi-orthogonal arrays with orthogonal arrays and mixed orthogonal arrays.  

\begin{definition}
An \emph{orthogonal array $OA(N,k, m,t)$} with index $\lambda$ is an $N\times k$ array with entries from $[m]$ with the property that 
in any $N\times t$ subarray every element of $[m]^{t}$ occurs exactly $\lambda$ times as a row. Note $\lambda=N/m^t$. (See, for example, \cite[p.~2]{MR1693498}).
A \emph{mixed orthogonal array $OA(N, m^kn^\ell, t)$} is an $N\times (k+\ell)$ array in which the first $k$-many columns take values from $[m]$ and the last $\ell$-many columns take values from $[n]$ 
with the property that in any $N\times t$ subarray every possible pair occurs an equal number of times as a row.  
(This is a special case of a mixed orthogonal array; see, for example, \cite[p.~200]{MR1693498} for the general definition). 
\end{definition}

Unlike a mixed orthogonal array, a svelte semi-orthogonal array has no condition on columns in the same group.  This gives the following observation.

\begin{proposition}
A partition of the columns of an orthogonal array $OA(m^2, k+\ell, m, 2)$ into parts of size $k$ and $\ell$ 
is a svelte semi-orthogonal array with parameters $(m,m, k,\ell)$.
A mixed orthogonal array $OA(mn, m^kn^\ell, 2)$ is a svelte semi-orthogonal array with parameters $(m,n,k,\ell)$.
\end{proposition}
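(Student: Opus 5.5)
The plan is to verify the defining property of a svelte semi-orthogonal array straight from the definitions. In each case we choose a column $u$ in the first group, with values in $[m]$, and a column $v$ in the second group, with values in $[n]$. We must then show that as the rows run over the array, every pair in $[m]\times[n]$ occurs exactly once. A preliminary point is that a svelte semi-orthogonal array is a \emph{set} of tuples. So we read an array with $mn$ rows as the set of its rows, and we check that the rows are distinct.

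For the first statement, take an $OA(m^2,k+\ell,m,2)$. Its index is $\lambda=m^2/m^2=1$. Label the columns of one part $1,\ldots,k$ and the columns of the other part $1,\ldots,\ell$. Each row then becomes an element $(y_1|y_2)$ of $[m]^k\times[m]^\ell$, with $n=m$. Now pick $u\in[k]$ and $v\in[\ell]$. These are two distinct columns of the array. By the index-one property, the $m^2\times 2$ subarray they form contains every element of $[m]^2=[m]\times[n]$ exactly once as a row. That is exactly the required condition.

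Distinctness of rows follows from the same property, provided $k,\ell\ge 1$ (we take this, since parameters are positive by the Notation). Two equal rows would agree in columns $u$ and $v$. The pair they share would then appear twice in that subarray, a contradiction. Hence the collection of rows is a set of $m^2=mn$ tuples, and it is a svelte semi-orthogonal array with parameters $(m,m,k,\ell)$.

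The second statement works the same way. The mixed array $OA(mn,m^kn^\ell,2)$ has first $k$ columns with values in $[m]$ and last $\ell$ columns with values in $[n]$. For $u\in[k]$ and the $v$-th of the last $\ell$ columns, the $2$-column subarray contains all $mn$ pairs of $[m]\times[n]$ equally often. There are $mn$ rows, so each pair occurs exactly once. The rows are distinct for the same reason as before, and the condition of the definition holds.

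The only real obstacle is interpretive rather than technical. We have to treat the array as a set of rows, which is legitimate because the rows are distinct. We also have to note that the conditions on pairs of columns within one group are simply not needed.
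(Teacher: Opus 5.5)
Your proof is correct and follows the paper's reasoning. The paper offers no formal proof, only the remark that a svelte semi-orthogonal array imposes no condition on columns in the same group, so the cross-group index-one strength-2 property is exactly what is needed. Your argument makes explicit two points the paper leaves implicit: that $\lambda=1$ (equivalently, $N=mn$ rows covering $mn$ pairs equally often), and that the rows are distinct, so the array may be read as a set.
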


\section{Cooperative systems}
\label{sec:halflatinmatrices}

We  give a combinatorial characterization of the cooperative matrices $\mu(\mathcal{R})$ of an ordered reticulation $\mathcal{R}$.  We dub such objects cooperative systems.  
They resemble sets of mutually orthogonal Latin squares \cite[Chapter 5]{MR3495977}.  
We  examine connections to svelte semi-orthogonal arrays (and thus reticulations), showing a relationship with cooperative pairs analogous to the connections between certain  orthogonal arrays  and mutually orthogonal Latin squares \cite[Theorems 2.21]{MR1644242} and \cite[Section 11.1]{MR3495977}.   

\begin{definition}
A matrix $C\in \mathcal{M}_{m\times n}([m])$ is said to be \emph{column-Latin} whenever 
each element of $[m]$ appears exactly once in each column, that is, for each $j\in[n]$, $\{C(i,j) :  i\in[m]\}=[m]$.
A matrix is said to be \emph{row-Latin} whenever
each element of $[n]$ appears exactly once in each row, that is, for each $i\in[m]$, $\{R(i,j) :  j\in[n]\}=[n]$.
Matrices 
$C\in \mathcal{M}_{m\times n}([m])$ and $R\in \mathcal{M}_{m\times n}([n])$  are said to be \emph{orthogonal} whenever
every possible pairing of entries occurs exactly once in some position, that is, $\{(C(i,j), R(i,j)) :  (i,j)\in[m]\times [n]\}=[m]\times[n]$.
\end{definition}

These matrices are not to be confused with Latin rectangles, which are $m\times n$ rectangular arrays 
in which each of the numbers $1$, $2$, $\ldots$, $n$ occurs exactly once in each row and at most once in each column  (see \cite[Section 3.1]{MR3495977}).

\begin{definition}\label{def:cooppair}
A pair $(C,R)\in \mathcal{M}_{m\times n}([m])\times\mathcal{M}_{m\times n}([n])$
is said to be a \emph{cooperative pair} of order $m\times n$ whenever the following hold.
\begin{description}
\item[(CP-1)]  $C$ is column-Latin.
\item[(CP-2)]  $R$ is row-Latin.
\item[(CP-3)]  $C$ and $R$ are orthogonal.
\end{description}
By a \emph{cooperative system} with parameters $(m,n,k,\ell)$, we mean an array of  $m\times n$ matrices
$(C^1, \ldots, C^k; R^1, \ldots, R^\ell)$ such that $(C^u, R^v)$ is a cooperative pair for all $u\in[k]$ and $v\in [\ell]$.
\end{definition}

Cooperative pairs were introduced as an encoding of bireversible finite automata, and their relationship to $(m,n,2,2)$-reticulations  was first discussed in \cite{CurtinSavchuk:combBRA}.   
Let $\mathcal{A}=(Q, X, \pi,\lambda)$ be a finite Mealy automaton. 
Then  ${\mathcal{A}}$ is \emph{invertible} when for each fixed state $q\in Q$, the output function $\lambda(q,\cdot)$ induces a permutation of the letters $X$,
 in which case its inverse automaton is ${\mathfrak i}\mathcal{A}=(Q,X,\tilde{\pi},\tilde{\lambda})$, where
$\tilde{\pi}(q,x)=\pi(q,\lambda_q^{-1}(x))^{-1}$ and $\tilde{\lambda}(q,x)=\lambda_q^{-1}(x)$ for every $x\in X$ and $q\in Q$.
The \emph{dual} of $\mathcal{A}$ is a finite automaton ${\mathfrak d}\mathcal{A}=(X,Q,\hat{\lambda},\hat{\pi})$, where
$\hat\lambda(x,q)=\lambda(q,x)$ and $\hat\pi(x,q)=\pi(q,x)$ for every $x\in X$ and $q\in Q$.
The  automaton $\mathcal{A}$ is  \emph{reversible} if its dual is invertible. 
The automaton $\mathcal{A}$ is \emph{co-reversible} if for every $(r,y)\in Q\times X$ there is a  unique $(x,q)\in Q\times X$ such that 
$\pi(q,x)=r$ and $\lambda(q,x)=y$.
Finally, $\mathcal{A}$ is  \emph{bireversible} if it is invertible, and both ${\mathcal{A}}$ and ${\mathfrak i}\mathcal{A}$ are reversible.
See, for instance, \cite{MR1841755} for more on bireversible automata.

 \begin{theorem} \cite{CurtinSavchuk:combBRA}
 Let $\mathcal{A}=([m], [n], \pi,\lambda)$ be a finite Mealy automaton. 
 Define $C_{\mathcal{A}}\in \mathcal{M}_{m\times n}([m])$ and  $R_{\mathcal{A}}\in\mathcal{M}_{m\times n}([n])$ by 
 $C_{\mathcal{A}}(i,j)=\pi(i,j)$  and $R_{\mathcal{A}}(i,j)=\lambda(i,j)$.
Then $\mathcal{A}$\ is invertible if and only if $R_{\mathcal{A}}$ is row-Latin, 
         $\mathcal{A}$\ is reversible if and only if $C_{\mathcal{A}}$ is column-Latin, 
         $\mathcal{A}$\ is co-reversible if and only if $C_{\mathcal{A}}$ and $R_{\mathcal{A}}$ are orthogonal, and 
         $\mathcal{A}$\ is bireversible if and only if $(C_{\mathcal{A}}, R_{\mathcal{A}}) $ is a cooperative pair.
 \end{theorem}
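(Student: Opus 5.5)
The plan is to check each of the four equivalences directly, by translating the automaton conditions entry by entry into conditions on $C_{\mathcal{A}}$ and $R_{\mathcal{A}}$. The last equivalence then follows by combining the first three with the definition of a cooperative pair (Definition \ref{def:cooppair}).

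\emph{Invertibility.} By definition, $\mathcal{A}$ is invertible iff for each state $i\in[m]$ the map $j\mapsto\lambda(i,j)$ is a permutation of $[n]$. This says exactly that row $i$ of $R_{\mathcal{A}}$ contains every element of $[n]$ once, i.e.\ that $R_{\mathcal{A}}$ is row-Latin.

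\emph{Reversibility.} The dual ${\mathfrak d}\mathcal{A}=([n],[m],\hat\lambda,\hat\pi)$ has output function $\hat\pi(j,i)=\pi(i,j)$. So ${\mathfrak d}\mathcal{A}$ is invertible iff, for each fixed letter $j$, the map $i\mapsto\pi(i,j)$ permutes $[m]$. This is the statement that column $j$ of $C_{\mathcal{A}}$ contains each element of $[m]$ once, i.e.\ that $C_{\mathcal{A}}$ is column-Latin.

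\emph{Co-reversibility.} The condition that each $(r,y)\in[m]\times[n]$ has a unique $(q,x)$ with $(\pi(q,x),\lambda(q,x))=(r,y)$ says the map $(i,j)\mapsto(C_{\mathcal{A}}(i,j),R_{\mathcal{A}}(i,j))$ is a bijection $[m]\times[n]\to[m]\times[n]$. That is precisely orthogonality of $C_{\mathcal{A}}$ and $R_{\mathcal{A}}$. (For finite sets of equal size, surjectivity and bijectivity coincide, which matches the set-equality phrasing of orthogonality.)

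\emph{Bireversibility.} By definition, $\mathcal{A}$ is bireversible iff it is invertible and both $\mathcal{A}$ and ${\mathfrak i}\mathcal{A}$ are reversible. Given invertibility ($R_{\mathcal{A}}$ row-Latin) and reversibility ($C_{\mathcal{A}}$ column-Latin), the remaining task, and the main obstacle, is to show that reversibility of ${\mathfrak i}\mathcal{A}$ is equivalent to co-reversibility of $\mathcal{A}$.

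Applying the reversibility criterion to ${\mathfrak i}\mathcal{A}$, it is reversible iff for each fixed letter $x$ the map $q\mapsto\tilde\pi(q,x)=\pi(q,\lambda_q^{-1}(x))$ is injective on $[m]$. I am reading the stray ${}^{-1}$ in the displayed formula for $\tilde\pi$ as a typo, though inverting the resulting state would not affect injectivity anyway. Writing $j=\lambda_q^{-1}(x)$, the pairs $(q,j)$ with $R_{\mathcal{A}}(q,j)=x$ range over exactly one cell in each row, since $R_{\mathcal{A}}$ is row-Latin. So the condition says that $C_{\mathcal{A}}$ takes distinct values on the $m$ cells where $R_{\mathcal{A}}$ equals $x$, for every $x$. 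That means every pair $(r,x)$ occurs at most once among $(C_{\mathcal{A}}(i,j),R_{\mathcal{A}}(i,j))$. Counting the $mn$ cells against the $mn$ pairs upgrades "at most once" to "exactly once", which is orthogonality, and the argument reverses. Hence bireversibility is equivalent to (CP-1), (CP-2) and (CP-3) together, i.e.\ to $(C_{\mathcal{A}},R_{\mathcal{A}})$ being a cooperative pair.
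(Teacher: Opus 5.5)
Your proof is correct and complete. The paper gives no proof of its own for this statement; it quotes the result from \cite{CurtinSavchuk:combBRA}, so there is no internal argument to compare against. Your direct translation of each automaton condition into a condition on $C_{\mathcal{A}}$ and $R_{\mathcal{A}}$ is the natural route.

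The one nontrivial step is handled properly. Given that $R_{\mathcal{A}}$ is row-Latin, the cells where $R_{\mathcal{A}}$ equals $x$ are exactly $\{(q,\lambda_q^{-1}(x)) : q\in[m]\}$. So reversibility of ${\mathfrak i}\mathcal{A}$ says precisely that no pair $(C_{\mathcal{A}}(i,j),R_{\mathcal{A}}(i,j))$ repeats. Comparing the $mn$ cells with the $mn$ possible pairs then turns this into orthogonality.

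One small correction to your reading of the definition: the exponent ${}^{-1}$ in $\tilde{\pi}$ is not a typo. It follows the usual convention that the states of the inverse automaton are formal inverses $q^{-1}$. As you already observed, this is a bijective relabeling of the states and does not affect the injectivity you need.
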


\begin{example}
\label{ex:CVHR}
Let $H=H_{m\times n}\in  \mathcal{M}_{m\times n}([m])$ and $V=V_{m\times n}\in \mathcal{M}_{m\times n}([n])$ be the matrices with entries $H(i,j)=i$ and  $V(i,j)=j$.
Then for all column-Latin matrices $C\in  \mathcal{M}_{m\times n}([m])$ and all row-Latin matrices $R\in \mathcal{M}_{m\times n}([n])$,
 $(C, V)$ and $(H,R)$ are cooperative pairs. 
By Lemma \ref{lem:gridembed}, the matrices $H$ and $V$ appear in connection with reticulations via the construction of $C^1$ and $R^1$ in Definition \ref{def:constructarray}. 
\end{example}

\begin{lemma}
\label{lem:VHgivehalfLatin}
Let  $C\in \mathcal{M}_{m\times n}([m])$  and $R\in \mathcal{M}_{m\times n}([n])$, and let $H$, and $V$ be as in Example \ref{ex:CVHR}.
\begin{enumerate}
\item   $C$ is column-Latin if and only if $C$ and $V$ are orthogonal.
\item  $R$ is row-Latin  if and only if $R$ and $H$ are orthogonal.
\end{enumerate}
\end{lemma}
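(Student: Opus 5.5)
The plan is to unwind the definitions and compare two conditions on the same finite set of pairs. For (i), the key observation is that $V(i,j)=j$, so the set of pairs in the orthogonality condition is
\[
S_C=\{(C(i,j),V(i,j)) : (i,j)\in[m]\times[n]\}=\{(C(i,j),j) : i\in[m],\, j\in[n]\}.
\]
Orthogonality of $C$ and $V$ means $S_C=[m]\times[n]$.

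To prove (i), I would argue column by column. Fix $j\in[n]$ and look at the slice
\[
\{q : (q,j)\in S_C\}=\{C(i,j): i\in[m]\}.
\]
This identity holds because the second coordinate of a pair in $S_C$ records exactly the column index. Hence $S_C=[m]\times[n]$ if and only if, for every $j$, the slice at $j$ equals $[m]$. That slice condition is exactly the definition of column-Latin. Both directions follow at once, since the equality of sets decomposes as a disjoint union over $j$.

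One detail needs checking: the definition of orthogonality is phrased as a set equality, while the informal wording says ``exactly once''. Since there are $mn$ positions and $|[m]\times[n]|=mn$, set equality is equivalent to the map $(i,j)\mapsto(C(i,j),j)$ being a bijection. Likewise, column-Latin is equivalent to $i\mapsto C(i,j)$ being a bijection of $[m]$ for each $j$. So no multiplicity issue arises.

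Part (ii) is the same argument with rows and columns interchanged, using $H(i,j)=i$. Then
\[
\{(H(i,j),R(i,j))\}=\{(i,R(i,j))\},
\]
and the slice at a fixed first coordinate $i$ is the set of entries of row $i$ of $R$. Alternatively, (ii) can be obtained by applying (i) to transposes: $R^\dag$ is column-Latin exactly when $R$ is row-Latin, and $H_{m\times n}^\dag=V_{n\times m}$, with the roles of $m$ and $n$ swapped, and the pair coordinates reversed.

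I expect no real obstacle. The only care needed is the counting remark above, which justifies passing between ``appears exactly once'' and plain set equality.
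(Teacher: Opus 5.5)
Your proposal is correct and follows the paper's argument: since $V(i,j)=j$, the orthogonality pairs split by column, so orthogonality with $V$ is exactly the column-Latin condition, and (ii) is the same argument with rows and $H$. Your counting remark, which reconciles the set-equality definition of orthogonality with the ``exactly once'' wording, makes explicit a step the paper leaves implicit.
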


\begin{proof}
Column $j$ of $V$ is $[j\ j\ \dots\ j]^T$.  $C$ is column-Latin if and only if each value in $[m]$ appears exactly once in each column if and only if  every $(i,j)\in[m]\times[n]$ appears exactly once
among the pairs $(V(i,j), C(i,j))$.  A similar argument gives (ii).
\end{proof}

\begin{lemma}
Suppose $(C^1, \ldots, C^k; R^1, \ldots, R^\ell)$  is a cooperative system with parameters $(m,n,k,\ell)$. 
Then the array
$(H,C^1, \ldots, C^k; V, R^1, \ldots, R^\ell)$ is a cooperative system with parameters $(m,n,k+1, \ell+1)$. 
We say that a cooperative system is \emph{normalized} when $C^1=H$ and $R^1=V$.
\end{lemma}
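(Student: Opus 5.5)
We have to check that every pair drawn from the enlarged system $(H,C^1,\ldots,C^k;V,R^1,\ldots,R^\ell)$ is a cooperative pair. By Definition \ref{def:cooppair}, the pairs are of four kinds: $(C^u,R^v)$, $(H,R^v)$, $(C^u,V)$, and $(H,V)$, with $u\in[k]$ and $v\in[\ell]$. We treat each kind in turn.

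The pairs $(C^u,R^v)$ are cooperative pairs because $(C^1,\ldots,C^k;R^1,\ldots,R^\ell)$ is a cooperative system by hypothesis.

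For a pair $(H,R^v)$, fix any $u_0\in[k]$. Since $(C^{u_0},R^v)$ is a cooperative pair, $R^v$ is row-Latin. Example \ref{ex:CVHR} then shows that $(H,R^v)$ is a cooperative pair. Concretely, $H$ is column-Latin because column $j$ of $H$ is $[1\,2\,\cdots\,m]^\dag$, and $H$ is orthogonal to $R^v$ by Lemma \ref{lem:VHgivehalfLatin}(ii). Symmetrically, for $(C^u,V)$, each $C^u$ is column-Latin because $(C^u,R^{v_0})$ is a cooperative pair for some $v_0\in[\ell]$. Example \ref{ex:CVHR} together with Lemma \ref{lem:VHgivehalfLatin}(i) then gives that $(C^u,V)$ is a cooperative pair.

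Finally, $(H,V)$ is a cooperative pair. Here $H$ is column-Latin, $V$ is row-Latin, and $(H(i,j),V(i,j))=(i,j)$ runs over all of $[m]\times[n]$ exactly once. This is also the instance $C=H$ of Example \ref{ex:CVHR}.

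The only delicate point is the use of the hypothesis $k,\ell\ge 1$. This is needed to extract the row-Latin property of $R^v$ and the column-Latin property of $C^u$ from some existing pair $(C^{u_0},R^v)$ or $(C^u,R^{v_0})$. In the paper's setting the parameters are positive integers, so this is harmless. Even if some $k$ or $\ell$ were $0$, the statement would still hold provided each matrix is assumed column-Latin or row-Latin as appropriate.
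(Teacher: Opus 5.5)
Your proof is correct and follows the paper's approach. The paper's proof simply cites Example~\ref{ex:CVHR} and Lemma~\ref{lem:VHgivehalfLatin}, and you unpack that citation into the four kinds of pairs $(C^u,R^v)$, $(H,R^v)$, $(C^u,V)$, $(H,V)$; your remark that $k,\ell\geq 1$ is needed to get each $R^v$ row-Latin and each $C^u$ column-Latin is accurate, and the paper's standing assumption that the parameters are positive integers covers it.
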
 

\begin{proof}
Clear from Example \ref{ex:CVHR} and Lemma \ref{lem:VHgivehalfLatin}.
\end{proof} 

A cooperative system can be normalized without adding repetition by prepending $H$  if it does not appear among the $C^u$ and prepending $V$ if it does not appear among the $R^v$.  
Otherwise, the lists can be reordered to bring $H$ and/or $V$ to the start of the list.    A cooperative system can be transformed into a  normalized cooperative system by applying an isotopism (Section \ref{sec:isotopy}), corresponding to choosing and ordering of the line families and lines of the related reticulation to define a grid of points (Lemma \ref{lem:gridembed}).

We re-use some notation in an obvious way.

\begin{theorem}
\label{thm:CS-TSOA}
Fix positive integers $m$, $n$, $k$, $\ell$.
\begin{enumerate}
\item Given a normalized cooperative system $\mathcal{C}=(C^1=H, \ldots, C^k; R^1=V, \ldots, R^\ell)$ with parameters $(m,n,k,\ell)$, let \\
$\sigma(\mathcal{C}) = \{ (C^1(i,j), \ldots, C^k(i,j); R^1(i,j), \ldots, R^\ell(i,j)) :  (i,j)\in[m]\times[n] \}$.
Then $\sigma(\mathcal{C})$ is a svelte semi-orthogonal array.
\item Given a svelte semi-orthogonal array $\mathcal{S}=\{(y_1|y_2)\}\subset[m]^k\times [n]^\ell$ with 
parameters $(m,n,k,\ell)$, let $\mu(\mathcal{S}) = (C^1, \ldots, C^k; R^1, \ldots, R^\ell)$, where
$C^u\in \mathcal{M}_{m\times n}([m])$ and $R^v\in \mathcal{M}_{m\times n}([n])$ satisfy
$C^u(i,j) =y_1(u)$ and $R^v(i,j)= y_2(v)$, where $(y_1|y_2)$ is the unique element of $\mathcal{S}$ with
$y_1(1)=i$ and $y_2(1)=j$.
Then $\mu(\mathcal{S})$ is a normalized cooperative system with parameters $(m,n,k,\ell)$.
\item $\sigma$ and $\mu$ are inverse bijections between the sets of svelte semi-orthogonal arrays and normalized cooperative systems with parameters $(m,n,k,\ell)$.
\end{enumerate}
\end{theorem}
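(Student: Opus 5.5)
The plan is to verify (i) and (ii) directly from the definitions, and then check that the two composites in (iii) are identities.

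For (i), fix $u\in[k]$ and $v\in[\ell]$. Since $(C^u,R^v)$ is a cooperative pair, (CP-3) says that the map $(i,j)\mapsto (C^u(i,j),R^v(i,j))$ from $[m]\times[n]$ to $[m]\times[n]$ is surjective. By cardinality it is therefore bijective. So every $(q,r)$ occurs exactly once among the pairs $(y_1(u),y_2(v))$, provided the tuples are counted with multiplicity by position. To see that $\sigma(\mathcal{C})$ is genuinely a set of $mn$ distinct tuples, use normalization: $C^1=H$ and $R^1=V$, so the tuple indexed by $(i,j)$ has $(y_1(1),y_2(1))=(i,j)$. Hence distinct positions give distinct tuples, and the "exactly once" condition in the definition of a svelte semi-orthogonal array holds.

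For (ii), first note that $\mu(\mathcal{S})$ is well defined. Theorem \ref{thm:TSOA-R}(i) with $u=v=1$ gives a unique element of $\mathcal{S}$ with $(y_1(1),y_2(1))=(i,j)$, and every element of $\mathcal{S}$ arises this way, since $|\mathcal{S}|=mn$ by the same theorem. Normalization is immediate: $C^1(i,j)=y_1(1)=i$ and $R^1(i,j)=y_2(1)=j$, so $C^1=H$ and $R^1=V$. For orthogonality of $C^u$ and $R^v$, the pairs $(C^u(i,j),R^v(i,j))$ as $(i,j)$ ranges over $[m]\times[n]$ are exactly the pairs $(y_1(u),y_2(v))$ as $(y_1|y_2)$ ranges over $\mathcal{S}$, by the bijection just described. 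By the defining property these pairs cover $[m]\times[n]$, which gives (CP-3).

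For (CP-1) and (CP-2), I would not argue directly but apply Lemma \ref{lem:VHgivehalfLatin}. The orthogonality just proved holds for all $u,v$, in particular for the pairs $(C^u,R^1)=(C^u,V)$ and $(C^1,R^v)=(H,R^v)$. By that lemma, each $C^u$ is column-Latin and each $R^v$ is row-Latin. So every $(C^u,R^v)$ is a cooperative pair, and $\mu(\mathcal{S})$ is a normalized cooperative system.

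For (iii), take a normalized system $\mathcal{C}$. The element of $\sigma(\mathcal{C})$ with first coordinates $(i,j)$ is exactly the $(i,j)$-indexed tuple, so $\mu(\sigma(\mathcal{C}))$ has $(i,j)$-entries $C^u(i,j)$ and $R^v(i,j)$; that is, $\mu(\sigma(\mathcal{C}))=\mathcal{C}$. Conversely, $\sigma(\mu(\mathcal{S}))$ consists of the tuples $(C^1(i,j),\dots;R^1(i,j),\dots)$, each of which is the unique element of $\mathcal{S}$ indexed by $(i,j)$. Since every element of $\mathcal{S}$ is indexed this way, $\sigma(\mu(\mathcal{S}))=\mathcal{S}$.

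The only delicate point is this set-versus-multiset bookkeeping: $\sigma(\mathcal{C})$ must have no collapsed duplicates, and $\mu$ must be well defined. Both are handled by the normalization $C^1=H$, $R^1=V$ together with Theorem \ref{thm:TSOA-R}(i). Everything else is direct unwinding of definitions.
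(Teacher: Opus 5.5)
Your proof is correct and follows essentially the same route as the paper: in (i) orthogonality of each $(C^u,R^v)$ gives the svelte condition; in (ii) the svelte condition gives normalization and orthogonality, and Lemma~\ref{lem:VHgivehalfLatin}, applied to the pairs $(C^u,V)$ and $(H,R^v)$, supplies the column- and row-Latin properties; and (iii) is unwinding definitions. Your extra bookkeeping (distinctness of the tuples in $\sigma(\mathcal{C})$, and well-definedness of $\mu$ via the $(u,v)=(1,1)$ bijection) makes explicit what the paper leaves implicit.
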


\begin{proof} 
In (i), observe that for $(u,v)\in[k]\times[l]$, 
$\{ (C^u(i,j), R^v): (i,j)\in[m]\times [n]\}=[m]\times [n]$ since $C^u$ and $R^v$ are orthogonal.  Thus,  $\sigma(\mathcal{C})$ is a svelte semi-orthogonal array.
We now prove (ii).    Since $\mathcal{S}$ is a svelte semi-orthogonal array, $\{ (y_1(u), y_2(v)) : (y_1|y_2)\in \mathcal{S}\} =[m]\times[n] $ for all $(u,v)\in[k]\times[\ell]$.
In the case $(u,v)=(1,1)$,  the construction gives $C^1(i,j)=i$ and $R^1(i,j)=j$ for $(i,j)\in[m]\times[n]$, so $C^1=H$ and $R^1=V$.  More generally, it gives
that $\{ C^u(i,j), R^v(i,j) : y_1(1)=i, y_2(1)=j\}=[m]\times [n]$.   Thus, each pair $C^u$ and $R^v$ is orthogonal.  In light of Lemma \ref{lem:VHgivehalfLatin}, $C^u$ and $R^v$ form a cooperative pair.  
Hence,  $\mu(\mathcal{S})$ is a normalized cooperative system with parameters as stated in (ii).
The constructions of $\sigma$ and $\mu$ are clearly inverse operations, giving (iii). 
\end{proof} 

\begin{theorem}
Let  $C^u\in \mathcal{M}_{m\times n}([m])$ $(u\in[k])$ and $R^v\in \mathcal{M}_{m\times n}([n])$ $(v\in[\ell])$. Then 
the following are equivalent.
\begin{enumerate}
\item $(C^1, \ldots, C^k; R^1, \ldots, R^\ell)$ is the matrix array of an ordered $(m,n,k,\ell)$-reticulation.
\item $(C^1, \ldots, C^k; R^1, \ldots, R^\ell)$ is a normalized cooperative system.
\end{enumerate}
\end{theorem}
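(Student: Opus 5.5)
The plan is to route both implications through svelte semi-orthogonal arrays, using Theorem \ref{thm:CS-TSOA} together with the results of Section \ref{sec:svelteSOA}. The link between matrices and arrays is the observation that the matrix array $\mu(\mathcal{R})$ of Definition \ref{def:gridarraymatrices} and the cooperative system $\mu(\sigma(\mathcal{R}))$ of Theorem \ref{thm:CS-TSOA}(ii) agree entrywise.

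For (i)$\Rightarrow$(ii), I take an ordered $(m,n,k,\ell)$-reticulation $\mathcal{R}$ with $\mu(\mathcal{R})=(C^1,\ldots,C^k;R^1,\ldots,R^\ell)$. By Lemma \ref{lem:gridembed}(ii),(iii) we have $C^1=H$ and $R^1=V$. To see that each $(C^u,R^v)$ is a cooperative pair, I argue directly from the reticulation. Each point $P_{i,j}$ lies on exactly one line $E^u_q$ and exactly one line $A^v_r$, and by (R-1) every pair $(q,r)$ is realized by exactly one point. Hence $\{(C^u(i,j),R^v(i,j))\}=[m]\times[n]$, so $C^u$ and $R^v$ are orthogonal. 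Applying this with $v=1$ and $u=1$ shows that $C^u$ is orthogonal to $V$ and $R^v$ is orthogonal to $H$. Lemma \ref{lem:VHgivehalfLatin} then gives that $C^u$ is column-Latin and $R^v$ is row-Latin. This establishes (CP-1)--(CP-3), so the system is a normalized cooperative system. Alternatively, this direction follows by noting that $\sigma(\mathcal{R})$ is a svelte semi-orthogonal array (the theorem preceding Theorem \ref{thm:TSOA-R}) and that $\mu(\mathcal{R})=\mu(\sigma(\mathcal{R}))$, then invoking Theorem \ref{thm:CS-TSOA}(ii).

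For (ii)$\Rightarrow$(i), I start from a normalized cooperative system $\mathcal{C}$ and pass to $\mathcal{Y}=\sigma(\mathcal{C})$, which is a svelte semi-orthogonal array by Theorem \ref{thm:CS-TSOA}(i). Theorem \ref{thm:TSOA-R}(ii) builds from $\mathcal{Y}$ an $(m,n,k,\ell)$-reticulation on $\mathcal{P}=[m]\times[n]$. I order it by taking the families $\mathcal{E}^u$, $\mathcal{A}^v$ and the lines $E^u_q$, $A^v_r$ with the indices supplied by that construction. Since $C^1=H$ and $R^1=V$, the line $E^1_i$ is the $i$-th row and $A^1_j$ is the $j$-th column, so $P_{i,j}=(i,j)$. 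Also $(i,j)\in E^u_q$ exactly when $C^u(i,j)=q$, and likewise for the $R^v$. Therefore the matrices of this ordered reticulation, as computed in Definition \ref{def:gridarraymatrices}, are exactly $C^u$ and $R^v$.

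The main obstacle is bookkeeping rather than substance. One has to check that the point labelling $P_{i,j}$ determined by $\mathcal{E}^1,\mathcal{A}^1$ matches the grid coordinates, which is precisely where the normalization $C^1=H$, $R^1=V$ is used. One also needs the fact, from Theorem \ref{thm:CS-TSOA}(ii), that each $(i,j)$ corresponds to a unique element of $\mathcal{Y}$, so that the matrices are recovered without ambiguity, as the inverse relation in Theorem \ref{thm:CS-TSOA}(iii) guarantees.
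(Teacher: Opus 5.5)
Your proof is correct and is essentially the paper's argument. The paper's proof simply combines Theorems~\ref{thm:TSOA-R} and~\ref{thm:CS-TSOA}, which is your route for (ii)$\Rightarrow$(i) and your alternative for (i)$\Rightarrow$(ii); your direct argument via (R-1), (R-2) and Lemma~\ref{lem:VHgivehalfLatin} just inlines those same results, and your check that normalization forces $P_{i,j}=(i,j)$ supplies bookkeeping the paper leaves implicit.
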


\begin{proof} 
Combine Theorems \ref{thm:TSOA-R} and \ref{thm:CS-TSOA}.
\end{proof}

\begin{example}
\label{ex:coopsys}
From Example \ref{ex:gridarraymatrix}, we have the cooperative system
\[\addtolength{\arraycolsep}{-0.4em}
(C^1=V, C^2; R^1=H, R^2) = 
 \left(
\left[\begin{array}{cccc}
1&1&1&1\\[-2pt]
2&2&2&2\\[-2pt]
3&3&3&3
\end{array}\right],\,  
\left[\begin{array}{cccc}
1&2&3&1\\[-2pt]
3&1&1&2\\[-2pt]
2&3&2&3
\end{array}\right]
;
\left[\begin{array}{cccc}
1&2&3&4\\[-2pt]
1&2&3&4\\[-2pt]
1&2&3&4
\end{array}\right],
\left[\begin{array}{cccc}
4&2&1&3\\[-2pt]
3&1&2&4\\[-2pt]
1&4&3&2
\end{array}\right]\right).
\]
\end{example}

We compare half-Latin matrix arrays with mutually orthogonal Latin squares.

\begin{definition}
A \emph{Latin square of order  $n$}  is an $n\times n$ array  such that every element of $[n]$  
appears exactly once in each row and each column.
Two Latin squares $L_1$ and $L_2$ of the same order $n$ are  \emph{orthogonal} whenever the pairs
$(L_1(i,j), L_2(i,j))$ are distinct for all  $(i,j)\in[n]\times[n]$. 
 A set of pairwise orthogonal Latin squares is called a set of \emph{mutually orthogonal Latin squares}.
 \end{definition}
 
References for Latin squares and sets of mutually orthogonal Latin squares include
\cite{MR1096296,MR3837138,MR3495977,MR1644242}.

\begin{theorem}\cite{MR3495977}
Let  $\mathcal{L}=\{L_1, \ldots, L_h\}$ be a set of mutually orthogonal Latin squares of order $n$.
Then $\mathcal{Y}=\{(i,j,L_1(i,j), L_2(i,j), \ldots, L_h (i,j))\}$ is an orthogonal array of strength 2 and index 1:
For any distinct $u$, $v\in[h+2]$, every element of $[n]\times [n]$ appears exactly once among 
the pairs $(y(u), y(v))$ as $y$ runs over $Y$.
\end{theorem}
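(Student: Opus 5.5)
The plan is to check every pair of coordinate positions $u<v$ in $[h+2]$. Index the tuple $y=(i,j,L_1(i,j),\ldots,L_h(i,j))$ so that $y(1)=i$, $y(2)=j$ and $y(t+2)=L_t(i,j)$. The set $\mathcal{Y}$ has at most $n^2$ elements, since it is indexed by $(i,j)\in[n]\times[n]$. So for each pair $(u,v)$ it suffices to show that the map $(i,j)\mapsto (y(u),y(v))$ from $[n]\times[n]$ to $[n]\times[n]$ is injective. A finite-cardinality argument then makes it bijective. Bijectivity shows both that distinct $(i,j)$ give distinct tuples (so $|\mathcal{Y}|=n^2$) and that each pair occurs exactly once.

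I will split into three cases according to how many of $u,v$ lie in $\{1,2\}$.

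\emph{Case $(u,v)=(1,2)$.} The pair is $(i,j)$ itself, and the map is the identity.

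\emph{Case $u\in\{1,2\}$, $v=t+2$.} Suppose $u=1$. If $(i,L_t(i,j))=(i',L_t(i',j'))$, then $i=i'$, and $L_t(i,j)=L_t(i,j')$. Since $L_t$ is Latin, each symbol occurs once in row $i$, so $j=j'$. The case $u=2$ is symmetric, using that each symbol occurs once in each column. This is the same content as Lemma~\ref{lem:VHgivehalfLatin}, with $H$ and $V$ playing the roles of the row- and column-index squares.

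\emph{Case $u=s+2$, $v=t+2$ with $s\neq t$.} Injectivity is exactly the definition of orthogonality of $L_s$ and $L_t$: the pairs $(L_s(i,j),L_t(i,j))$ are distinct over all $(i,j)$.

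There is essentially no obstacle. The only point needing care is that $\mathcal{Y}$ is a set, so one must note that distinct $(i,j)$ give distinct tuples; this already follows from the first two coordinates. After that, the counting step (an injection of an $n^2$-set into an $n^2$-set is a bijection) gives the "exactly once" conclusion uniformly in all three cases. Finally, the index is $\lambda=|\mathcal{Y}|/n^2=1$, and the strength-$2$ property is precisely the pairwise statement just proved.
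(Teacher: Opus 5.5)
Your proof is correct. The paper gives no proof of this statement and only cites it from Keedwell and D\'enes; your case split is the standard argument, and it matches how the paper argues Theorem~\ref{thm:CS-TSOA}(i) together with Lemma~\ref{lem:VHgivehalfLatin}. The split is: the identity for the two index coordinates, the row or column Latin property for an index coordinate paired with a square, and orthogonality for two squares, each followed by the injection-implies-bijection count on $[n]\times[n]$. The only thing you leave implicit is the case $u>v$, which follows from the $u<v$ case by composing with the coordinate swap.
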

 
As an immediate consequence, we get the following.

\begin{corollary}
Let $\mathcal{L}$ be a set of at least $(k+\ell)$-many mutually orthogonal Latin squares of order $n$.  
Let $\mathcal{L}_{\mathrm{weft}}=\{L^+_1, \ldots, L^+_k\}$ and 
      $\mathcal{L}_{\mathrm{warp}}=\{L^-_1, \ldots, L^-_\ell\}$ be disjoint subsets of $\mathcal{L}$.
Then 
\[  ( H, L^+_1, \ldots, L^+_k; V, L^-_1, \ldots, L^-_\ell) \]
is a cooperative system with parameters $(n,n, k+1, \ell+1)$.
\end{corollary}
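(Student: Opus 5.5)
The plan is to check directly that every pair $(C,R)$ with $C$ taken from $\{H, L^+_1,\ldots,L^+_k\}$ and $R$ taken from $\{V, L^-_1,\ldots,L^-_\ell\}$ satisfies (CP-1)--(CP-3) with $m=n$. All matrices here are $n\times n$ with entries in $[n]$, so they lie in both $\mathcal{M}_{n\times n}([n])$ slots.

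First I check the Latin properties. $H$ is column-Latin, since column $j$ of $H$ is $[1\,2\,\ldots\,n]^\dag$. $V$ is row-Latin for the same reason. Each $L^\pm_s$ is a Latin square, so it is both column-Latin and row-Latin. Hence (CP-1) holds for every $C$ in the weft list and (CP-2) holds for every $R$ in the warp list.

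Next I check orthogonality (CP-3), splitting into four cases.
\begin{itemize}
\item The pair $(H,V)$: the pairs $(H(i,j),V(i,j))=(i,j)$ run over $[n]\times[n]$.
\item The pairs $(H,L^-_t)$: since $L^-_t$ is row-Latin, Lemma \ref{lem:VHgivehalfLatin}(ii) gives that $L^-_t$ and $H$ are orthogonal.
\item The pairs $(L^+_s,V)$: since $L^+_s$ is column-Latin, Lemma \ref{lem:VHgivehalfLatin}(i) gives orthogonality.
\item The pairs $(L^+_s,L^-_t)$: these are two distinct members of $\mathcal{L}$, distinct because $\mathcal{L}_{\mathrm{weft}}$ and $\mathcal{L}_{\mathrm{warp}}$ are disjoint, so they are orthogonal Latin squares.
\end{itemize}

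For the last case, orthogonality in the Latin-square sense says the $n^2$ pairs $(L^+_s(i,j),L^-_t(i,j))$ are distinct. Since these are $n^2$ distinct elements of $[n]\times[n]$, they exhaust it, which is exactly the matrix orthogonality of the definition. Alternatively, I can invoke the cited orthogonal array theorem with the columns $u,v$ corresponding to $L^+_s$ and $L^-_t$. In fact, that theorem handles all four cases at once, because $H$ and $V$ are the first two coordinates $i,j$ of $\mathcal{Y}$.

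The only mild obstacle is the bookkeeping in the last case: the cited definition states orthogonality as distinctness of pairs, whereas (CP-3) requires the pairs to equal all of $[n]\times[n]$. The cardinality count bridges these. I also need disjointness so that I never pair a square with itself. Once these are settled, the tuple is a cooperative system with parameters $(n,n,k+1,\ell+1)$, and it is normalized.
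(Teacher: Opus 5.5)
Your proof is correct and matches the paper's approach. The paper states the corollary as an immediate consequence of the cited theorem that $\{(i,j,L_1(i,j),\ldots,L_h(i,j))\}$ is an orthogonal array of strength 2 and index 1, with $H$ and $V$ as the first two coordinates; this is exactly the alternative you mention. Your case-by-case check simply unpacks that one-line argument: Lemma \ref{lem:VHgivehalfLatin} handles the pairs involving $H$ or $V$, the count of $n^2$ distinct pairs in $[n]\times[n]$ handles $(L^+_s,L^-_t)$, and disjointness rules out pairing a square with itself.
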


We have an equivalence between ordered reticulations, svelte semi-orthogonal arrays, and cooperative systems which mirrors the equivalence between nets, orthogonal arrays, and sets of mutually orthogonal Latin squares. 
In fact, objects in the latter list are examples of objects in the former list.

\section{Semitransversal designs}
\label{sec:semitransversaldesign}

We describe a dual geometric structure of a reticulation, which provides another means of encoding the reticulation.  
Here, lines become varieties, taking the role of points, and points become blocks, taking the role of lines, while the families  of lines are collected into groups.
 We first recall  transversal designs, a special case of which are dual to   $(r,k)$-nets.   For more on transversal designs, see \cite{MR1742365, MR2246267}, for example.

\begin{definition}
A \emph{transversal design} with $r$ groups of size $k$ and index $\lambda$ is a triple $(\mathcal{V},\mathcal{G},\mathcal{B})$ satisfying
  $\mathcal{V}$ is a set of $rk$-many elements, called \emph{varieties};
  $\mathcal{G}$ is a partition of $\mathcal{V}$ into $r$-many cells of size $k$,  called \emph{groups};
  $\mathcal{B}$ is a collection of subsets of size $r$ of varieties, called \emph{blocks}; and
  each pair of varieties is either in exactly one group or exactly $\lambda$-many blocks.
Write $TD[r,\lambda;k]$ to denote such a transversal design.
\end{definition}

Given an $(r,k)$-net, a transversal design $TD[r;1;k]$ is defined as follows.
The varieties are the  $rk$-many lines of the net.  
The groups are the $r$-many parallel families  of lines.
There is a block for each of the  $k^2$-many points, consisting of all $r$ of the lines incident with the point.
Thus, by Proposition \ref{prop:net->retic}, any division of the groups of such a transversal design into two types encodes a reticulation.  
 
We formulate an analog of a transversal design $TD[r;1;k]$ associated with a reticulation.  
The groups come in two types, so we use the modifier ``semi''.  
Here we shall not attempt to formulate a general notation admitting an index/indices larger than one.
We use the adjective ``svelte'' to leave room for  a potential generalization. 

\begin{definition}
 By a \emph{svelte semitransversal design} with parameters $(m, n, k, \ell)$, we mean a tuple $(\mathcal{V}, \mathcal{G}_1, \mathcal{G}_2, \mathcal{B})$ 
  such that the following hold.
(i)    $\mathcal{V}$ is a set of $(km+\ell n)$-many \emph{varieties}.
(ii)   $\mathcal{G}_1$ is a set of $k$-many \emph{weft groups}, each consisting of $m$-many varieties, and 
       $\mathcal{G}_2$ is a set of $\ell$-many \emph{warp groups}, each consisting of $n$-many varieties such that $\mathcal{G}_1$ and $\mathcal{G}_2$ partition $\mathcal{V}$.
(iii)  $\mathcal{B}$ is a set of $mn$-many \emph{blocks}, each consisting of $(h+k)$-many varieties.  
(iv)  Every group meets every block in exactly one variety, and every pair of varieties from groups of different type appear together in exactly one block.
\end{definition}

We now show that svelte semitransversal designs are equivalent to reticulations.

\begin{lemma}
Given an $(m,n,k,\ell)$-reticulation $\mathcal{R}=(\mathcal{P}, \mathfrak{F}_\mathrm{weft}, \mathfrak{F}_\mathrm{warp})$, 
construct a svelte semitransversal design $(\mathcal{V}, \mathcal{G}_1, \mathcal{G}_2, \mathcal{B})$  with parameters $(m, n, k, \ell)$ as follows.
Let the varieties $\mathcal{V}$ be the set of all  $km$-many weft lines and all $\ell n$-many warp lines, 
      where  lines consisting of the same points are distinct varieties when they appear in different  families. 
Let the groups be $\mathcal{G}_1= \mathcal{F}_\mathrm{weft}$ and $\mathcal{G}_2=\mathcal{F}_\mathrm{warp}$.
For each $p\in \mathcal{P}$, define a block $B\in\mathcal{B}$  consisting of all lines incident with $p$.
\end{lemma}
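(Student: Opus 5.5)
The plan is to check the four defining conditions (i)--(iv) of a svelte semitransversal design directly from the reticulation axioms (R-1), (R-2) and the counts of Theorem \ref{thm:regular}. Throughout, varieties are pairs (family, line), so a line occurring in two families, or a family occurring twice in the multiset, gives distinct varieties. This bookkeeping makes the groups genuinely disjoint and is the only delicate point. I also read the block size in (iii) as $(k+\ell)$, which is evidently what is intended by ``$(h+k)$''.

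For (i) and (ii), Theorem \ref{thm:regular}(vi) says each of the $k$ weft families consists of $n$ parallel lines, and (vii) says each of the $\ell$ warp families consists of $m$ parallel lines. The definition pairs weft groups with $m$ varieties and warp groups with $n$, whereas the lemma's own count says ``$km$-many weft lines''. So these counts must be matched against Theorem \ref{thm:regular}, transposing via Lemma \ref{lem:transposereticulation} if needed, and I expect to have to reconcile this indexing convention rather than prove anything new. Once the labels agree, each family is a group of the stated size. The groups are pairwise disjoint because varieties are tagged by their family, and together they cover $\mathcal{V}$ by construction. The total is $km+\ell n$.

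For (iii), there is one block per point, and $|\mathcal{P}|=mn$ by Theorem \ref{thm:regular}(i). Distinct points give distinct blocks: two points lie on a common line of $\mathcal{E}^1$ and also of $\mathcal{A}^1$ only if both lie in $E\cap A$ for lines of different type, and by (R-1) that intersection is a single point. By Theorem \ref{thm:regular}(ii),(iii) each point lies on exactly one line of each family, so each block has $k+\ell$ varieties.

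For (iv), the first clause is (R-2): a family partitions $\mathcal{P}$, so the block of $p$ contains exactly one variety from that family's group. For the second clause, take a weft variety $E$ and a warp variety $A$. The blocks containing both correspond to points of $E\cap A$, and by (R-1) there is exactly one such point. No step is a real obstacle; the main care is in the multiset and tagging conventions, and in matching the parameter labels between the definition and Theorem \ref{thm:regular}.
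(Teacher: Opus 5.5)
Your proposal is correct and follows the same route as the paper: the sizes come from Theorem~\ref{thm:regular}, and the two clauses of (iv) come directly from (R-2) and (R-1). Your added checks, that distinct points give distinct blocks and that varieties are tagged by family, are sound refinements the paper omits. One small correction: transposing via Lemma~\ref{lem:transposereticulation} does not fix the $m$/$n$ mismatch, because it also swaps $k$ and $\ell$ and the mismatch persists; the paper simply uses, from Section~\ref{sec:gridembed} on, the convention that each weft family has $m$ lines of size $n$, so you should apply Theorem~\ref{thm:regular} with the letters $m$ and $n$ interchanged.
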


\begin{proof}
The varieties, groups, and blocks are of the appropriate sizes by Theorem \ref{thm:regular}. 
By (R-1), every pair of varieties (lines) belonging to groups of different type are in exactly one block (meet in one point).
By (R-2), every group (family of lines) meets each block (point) in exactly one variety (line) (each family of lines partitions the points).
\end{proof}

\begin{lemma}
Given a svelte semitransversal design $(\mathcal{V}, \mathcal{G}_1, \mathcal{G}_2, \mathcal{B})$ with parameters $(m, n, k, \ell)$,
construct an $(m,n,k,\ell)$-reticulation  $\mathcal{R}=(\mathcal{P}, \mathcal{F}_\mathrm{weft}, \mathcal{F}_\mathrm{warp})$ as follows.
Let $\mathcal{P}=\mathcal{B}$.   
For each $G_u\in \mathcal{G}_1$ and each $V^u_j\in G_u$, let $E^u_i = \{B\in \mathcal{B} :  V^u_i\cap B\not=\emptyset \}$  $(u\in[k], i\in[m])$, and 
let $\mathcal{E}^u = \{ E^u_i \}_{i\in[m]}$ and $\mathcal{F}_\mathrm{weft} = \{\mathcal{E}^u\}_{u\in[k]}$.
For each $G_v\in \mathcal{G}_2$ and each $V^v_i\in G_u$, let $A^v_j= \{B\in \mathcal{B} :  V^v_j\cap B\not=\emptyset \}$   $(v\in[\ell], j\in[n])$, and 
let  $\mathcal{A}^v = \{ A^v_j \}_{j\in[n]}$ and $\mathcal{F}_\mathrm{warp} = \{\mathcal{A}^v\}_{v\in[\ell]}$.
\end{lemma}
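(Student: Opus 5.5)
We must check that the constructed triple satisfies (R-1) and (R-2), and then read off the parameters from Theorem \ref{thm:regular}. (The indexing in the statement has harmless typos; we read $E^u_i$ as the set of blocks containing the $i$th variety $V^u_i$ of the weft group $G_u$, and similarly $A^v_j$ for $V^v_j\in G_v$.) The plan is to dualize the proof of the preceding lemma, each semitransversal axiom supplying one reticulation axiom.

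For (R-2), fix a weft group $G_u$. By (iv) of the definition, every block $B$ meets $G_u$ in exactly one variety $V^u_i$. So $B$ lies in $E^u_i$ for exactly one $i$. Hence the sets $E^u_1,\ldots,E^u_m$ cover $\mathcal{P}=\mathcal{B}$ and are pairwise disjoint. One point needs care: a partition requires nonempty cells, so each $E^u_i$ must be nonempty. For this, pick any variety $W$ in a warp group (nondegeneracy, $\ell\ge1$). By (iv) the pair $\{V^u_i,W\}$ lies in exactly one block, so $E^u_i\neq\emptyset$. The same argument with the roles exchanged shows that each $\mathcal{A}^v$ partitions $\mathcal{B}$.

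For (R-1), take a weft line $E^u_i$ and a warp line $A^v_j$. A point $B$ lies in both exactly when $V^u_i\in B$ and $V^v_j\in B$. By (iv), the two varieties come from groups of different type and so lie together in exactly one block. Thus $|E^u_i\cap A^v_j|=1$.

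It then remains to verify the parameters. There are $k$ weft families and $\ell$ warp families, and $|\mathcal{P}|=|\mathcal{B}|=mn$. The size of a weft line $E^u_i$ is the number of blocks through $V^u_i$. Fixing a warp group $G_v$ of size $n$, each block through $V^u_i$ contains exactly one variety of $G_v$, and each such variety pairs with $V^u_i$ in exactly one block. This gives a bijection, so $|E^u_i|=n$. Similarly, $|A^v_j|=m$.

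This appears to conflict with the convention of Theorem \ref{thm:regular}, under which a weft line should have size $m$. The main obstacle is therefore this bookkeeping of roles. In the preceding lemma each weft group has $m$ varieties, namely the weft lines, whereas Theorem \ref{thm:regular}(vi) says each weft family contains $n$ lines. So either the definition's sizes are meant to be read with $m$ and $n$ swapped, or the resulting reticulation is an $(n,m,k,\ell)$-reticulation. The transpose of Lemma \ref{lem:transposereticulation} does not fix this directly, since it also swaps $k$ and $\ell$. I would reconcile this by matching conventions with the preceding lemma and stating the parameters accordingly. Everything else is a direct translation.
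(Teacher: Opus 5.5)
Your argument is correct and is the paper's own: (R-2) comes from each group meeting each block in exactly one variety, and (R-1) from each cross-type pair of varieties lying in exactly one block. Your nonemptiness check and the bijective count $|E^u_i|=n$, $|A^v_j|=m$ spell out what the paper dismisses as ``by construction.'' The convention clash you flag is genuine but lies in the paper itself. Theorem~\ref{thm:regular} calls the weft-line size $m$, whereas the example following it, Definition~\ref{def:constructarray}, the preceding lemma, and this very proof (``each family of weft lines has $m$-many lines of size $n$'') take $m$ to be the number of lines in each weft family. Under the convention the paper actually uses, your count yields exactly an $(m,n,k,\ell)$-reticulation.
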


\begin{proof}
Since every pair of varieties from different  sets appear together in exactly one group, any two lines of different type meet in a unique point (R-1).
Since every group meets every block in a unique point, every family of lines partitions $\mathcal{B}$ (R-2).  
Thus, $\mathcal{R}$ is a reticulation.  
By construction $|\mathcal{P}|=|\mathcal{B}|=mn$, 
                         $|\mathcal{F}_\mathrm{weft}|=k$, 
                         $|\mathcal{F}_\mathrm{warp}|=\ell$, 
                         each family of weft lines has $m$-many lines of size $n$, and 
                         each family of warp lines has $n$-many lines of size $m$.   
Thus, $\mathcal{R}$ has parameters $(m,n,k,\ell)$.
\end{proof}

\begin{example}
The svelte semitransversal design associated with the reticulation of Example  \ref{ex:gridarraymatrix} is as follows.
\[
\begin{array}{rcl}
\mathcal{V} \!\!&\!\!=\!\!&\!\! \{
 E^1_1, E^1_2, E^1_3, E^2_1, E^2_2, E^2_3,   A^1_1, A^1_2 , A^1_3, A^1_4,  A^2_1, A^2_2 , A^2_3, A^2_4
 \}, \\
 \mathcal{G}_1 \!\!&\!\!=\!\!&\!\!  \{ G^1_1 = \{   E^1_1, E^1_2, E^1_3\}, \, \,   G^1_2 = \{ E^2_1, E^2_2, E^2_3  \}  \},\\
  \mathcal{G}_2  \!\!&\!\!=\!\!&\!\!  \{ G^2_1 = \{  A^1_1, A^1_2 , A^1_3, A^1_4  \} \, \,   G^2_2  = \{  A^2_1, A^2_2 , A^2_3, A^2_4\} \},\\
\mathcal{B}  \!\!&\!\!=\!\!&\!\! 
\left\{\!\!\!
\begin{array}{c|c|c}
B_a=\{E^1_1, E^2_3, A^1_3, A^2_1\} & B_e=\{E^1_2, E^2_1, A^1_2, A^2_1\} & B_i=\{E^1_3, E^2_2, A^1_1, A^2_1\} \\
B_b=\{E^1_3, E^2_3, A^1_1, A^2_2\} & B_f=\{E^1_2, E^2_1, A^1_2, A^2_1\} & B_j=\{E^1_1, E^2_2, A^1_2, A^2_2\} \\
B_c=\{E^1_2, E^2_3, A^1_1, A^2_3\} & B_g=\{E^1_1, E^2_1, A^1_4, A^2_3\} & B_k=\{E^1_3, E^2_2, A^1_3, A^2_3\} \\
B_d=\{E^1_3, E^2_3, A^1_2, A^2_4\} & B_h=\{E^1_1, E^2_1, A^1_1, A^2_4\} & B_\ell=\{E^1_2, E^2_2, A^1_4, A^2_4\} \\
\end{array}
\!\!\!\right\}.
\end{array}
\]
\end{example}

\section{Parastrophy}
\label{sec:parastrophy}

Let $\mathcal{R}=(\mathcal{P}, \mathcal{F}_{\mathrm{weft}}, \mathcal{F}_{\mathrm{warp}})$ be a $(m,n,k,\ell)$-reticulation. The constructions in Definition \ref{def:constructarray} are relative to several orderings. 
In this section, we examine the effects of changing the ordering of the weft and warp families, the ordering of the weft families, and the ordering of the warp families.  
We adapt the notion of parastrophy of Latin squares (see \cite[Section 1.4]{MR3495977}) to cooperative pairs.  
The geometric interpretation of parastrophy in this section and isometry in the next section parallel those developed in  \cite[Chapter 2]{MR1125767} (see also \cite[Chapter 8]{MR3495977}, \cite[Section 1.4]{MR3837138}).

\begin{definition}
\label{def:parastrophy}
Let $\mathcal{Y}$ be a svelte semi-orthogonal array with parameters $(m,n,k,\ell)$. 
By a \emph{parastrophism}  of $\mathcal{Y}$ we mean an element of 
        $(\alpha, \beta, \gamma)\in \sym{2}\times \sym{k}\times \sym{\ell}$ 
acting on $[m]^k\times [n]^\ell$ as follows.
\[
((\alpha, \beta, \gamma)(y_1|y_2))(u,v)=
\begin{cases}
y_1(\beta^{-1}(u))y_2(\gamma^{-1}(v)) & \hbox{if $\alpha=(1)$}, \\
y_2(\gamma^{-1}(v))y_1(\beta^{-1}(u)) &  \hbox{if $\alpha=(12)$}.
\end{cases}
\]
Let $\mathcal{R}$ be a finite nondegenerate ordered $(m,n,k,\ell)$-reticulation.
By a \emph{parastrophism} of $\mathcal{R}$ we mean a parastrophism of $\sigma(\mathcal{R})$ acting as
$\sigma^{-1}((\alpha, \beta, \gamma)\sigma(\mathcal{R}))$ to produce another ordered reticulation.
We refer to the result of this action as a \emph{parastrophe} of $\mathcal{R}$.
\end{definition}

Observe that there are $(2\cdot k! \cdot \ell!)$-many parastrophisms, but not all of the resulting parastrophes need to be distinct.  
This is clearly the case for a reticulation with repetition, but even repetition-free reticulations may have distinct parastrophisms giving indistinct parastrophes.  
This is known to be the case for Latin squares, for instance.

Note that $\beta^{-1}$ and $\gamma^{-1}$ commute and are applied before $\alpha$. 
In particular,  $\beta^{-1}$ permutes the elements of $\mathcal{F}_{\mathrm{weft}}$,  $\gamma^{-1}$ permutes the elements of $\mathcal{F}_{\mathrm{warp}}$, and then $\alpha$ either fixes or swaps (transposes) weft and warp. 

\begin{lemma}
If $\mathcal{R}$ is an ordered $(m,n,k,\ell)$-reticulation  and $\pi$ is a parastrophism of $\mathcal{R}$, 
then $\pi(\mathcal{R})$ is a reticulation with parameter $(m,n,k,\ell)$ or $(n,m,\ell,k)$ depending upon whether the (multi)sets of warp and waft families are fixed or swapped.
\end{lemma}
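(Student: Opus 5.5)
The plan is to reduce everything to svelte semi-orthogonal arrays. By definition, $\pi(\mathcal{R})=\sigma^{-1}(\pi\,\sigma(\mathcal{R}))$. So it suffices to show that $\pi$ maps a svelte semi-orthogonal array with parameters $(m,n,k,\ell)$ to one with parameters $(m,n,k,\ell)$ when $\alpha=(1)$, and to one with parameters $(n,m,\ell,k)$ when $\alpha=(12)$. Once that is done, Theorem \ref{thm:TSOA-R}(ii) converts the new array into a reticulation with the stated parameters. For the case $\alpha=(12)$ one may also invoke Lemma \ref{lem:transposereticulation}.

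Write $\pi=(\alpha,\beta,\gamma)$ and factor it as $(\alpha,1,1)\circ(1,\beta,\gamma)$, consistent with the remark that $\beta^{-1}$ and $\gamma^{-1}$ are applied before $\alpha$. Handle the two factors separately.

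\textbf{The factor $(1,\beta,\gamma)$.} Take $y=(y_1|y_2)$ and set $y_1'(u)=y_1(\beta^{-1}(u))$ and $y_2'(v)=y_2(\gamma^{-1}(v))$. For fixed $u\in[k]$ and $v\in[\ell]$, the pairs $(y'_1(u),y'_2(v))$ are exactly the pairs $(y_1(\beta^{-1}(u)),y_2(\gamma^{-1}(v)))$. As $y$ runs over $\mathcal{Y}$, these pairs run over $[m]\times[n]$ exactly once, by the svelte condition applied to the indices $\beta^{-1}(u)$ and $\gamma^{-1}(v)$. The map $y\mapsto y'$ is injective, since it is a coordinate permutation. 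Hence the image is a svelte semi-orthogonal array with parameters $(m,n,k,\ell)$.

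\textbf{The factor $(12,1,1)$.} This sends $(y_1|y_2)$ to $(y_2|y_1)\in[n]^\ell\times[m]^k$. For $v\in[\ell]$ and $u\in[k]$, the pair $(y_2(v),y_1(u))$ is the swap of the pair $(y_1(u),y_2(v))$. Swapping is a bijection $[m]\times[n]\to[n]\times[m]$, so every element of $[n]\times[m]$ occurs exactly once. The image is therefore a svelte semi-orthogonal array with parameters $(n,m,\ell,k)$.

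Finally, apply Theorem \ref{thm:TSOA-R}(ii) to the resulting array, together with the corollary that $\sigma$ is invertible. This shows that $\sigma^{-1}$ of the image is an ordered reticulation with parameters $(m,n,k,\ell)$ or $(n,m,\ell,k)$, according to $\alpha$. The geometric meaning is that the families of weft and warp lines are permuted among themselves, and are swapped when $\alpha=(12)$.

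The main obstacle is notational rather than mathematical. One has to interpret the concatenated output in Definition \ref{def:parastrophy} correctly as an element of $[n]^\ell\times[m]^k$ in the swapped case, and check that $\sigma^{-1}$ is defined on the transformed array. Definitionally, $\sigma^{-1}$ requires a normalized form, in which the first coordinates index the grid. If $\beta$ or $\gamma$ moves index $1$, I would first re-embed using Lemma \ref{lem:gridembed}, that is, use Theorem \ref{thm:TSOA-R}(ii) with the new first families. That theorem only requires the svelte property, so no normalization is actually needed for it.
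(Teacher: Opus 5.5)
Your proposal is correct and follows the same route as the paper: its one-line proof is exactly your observation. Permuting coordinates within the weft and warp groups, and possibly swapping the two groups, preserves the property that every pair appears exactly once, so the image is again a svelte semi-orthogonal array with parameters $(m,n,k,\ell)$ or $(n,m,\ell,k)$. As you note yourself, the normalization worry is unnecessary: in any svelte semi-orthogonal array the first weft and first warp coordinates already index the grid, so Theorem~\ref{thm:TSOA-R}(ii) applies directly and $\sigma^{-1}$ is defined on the transformed array.
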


\begin{proof} 
A parastrophism permutes the first $m$ components and the last $n$ components, possibly swapping the two groups.  
After doing so, it is still the case that for any choice of component from the first group and component from the second group all possible pairs appear. 
\end{proof} 

The inverses are introduced to give the following.

\begin{lemma}
Let $\mathcal{R}$ be an ordered $(m,n,k,\ell)$-reticulation.
Suppose $\pi=(\alpha, \beta, \gamma)$ is a parastrophism. 
\begin{enumerate}
\item If $\alpha$ fixes weft and warp, then
\begin{align}
\sigma(\pi(\mathcal{R})) &= \{ (C^{\beta(1)}(i,j)\ldots, C^{\beta(k)}(i,j);R^{\gamma(1)}(i,j), \ldots, R^{\gamma(\ell)}(i,j) ): i\in[m], j\in[n]\}, \nonumber\\
 \mu(\pi(\mathcal{R})) &= ( C^{\beta(1)}, \ldots, C^{\beta(k)};R^{\gamma(1)}, \ldots, R^{\gamma(\ell)}). \nonumber
\end{align}
\item If $\alpha$ swaps weft and warp, then 
\begin{align}
 \sigma(\pi(\mathcal{R})) &= \{ (R^{\gamma(1)}(j,i), \ldots, R^{\gamma(\ell)}(j,i); C^{\beta(1)}(j,i)\ldots, C^{\beta(k)}(j,i) ): i\in[m], j\in[n]\}, \nonumber\\
\mu(\pi(\mathcal{R})) &= ((R^{\gamma(1)})^\dag, \ldots, (R^{\gamma(\ell)})^\dag; (C^{\beta(1)})^\dag, \ldots, (C^{\beta(k)})^\dag). \nonumber
\end{align}
\end{enumerate}
\end{lemma}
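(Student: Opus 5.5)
This is essentially a direct computation from Definition \ref{def:parastrophy} together with the inverse bijections of Theorem \ref{thm:CS-TSOA} and Theorem \ref{thm:TSOA-R}. I would first fix the convention that an element of $\sigma(\mathcal{R})$ is the tuple $y=(y_1|y_2)$ with $y_1(u)=C^u(i,j)$ and $y_2(v)=R^v(i,j)$, indexed by the point $P_{i,j}$. I would also check, and if necessary fix, the convention under which the permuted tuple is re-indexed. The displayed formula in Definition \ref{def:parastrophy} should be read as saying that the $u$-th weft coordinate of the image is the coordinate $\beta^{-1}(u)$ or $\beta(u)$ of $y_1$. The statement asserts $C^{\beta(u)}$ in position $u$, so the intended reading is that the family sitting at position $\beta(u)$ moves to position $u$. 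The first step is therefore to state this reading precisely; this is the only place where the inverses matter.

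For part (i), with $\alpha=(1)$, I apply $(\alpha,\beta,\gamma)$ elementwise to $\sigma(\mathcal{R})$. Each tuple $(C^1(i,j),\ldots,C^k(i,j);R^1(i,j),\ldots,R^\ell(i,j))$ is sent to $(C^{\beta(1)}(i,j),\ldots;R^{\gamma(1)}(i,j),\ldots)$. Since $\sigma\circ\sigma^{-1}$ is the identity on svelte semi-orthogonal arrays, this set is $\sigma(\pi(\mathcal{R}))$, which gives the first display.

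For the matrices, I then read off $\mu$. Here I must be careful, because $\mu$ in Theorem \ref{thm:CS-TSOA}(ii) indexes positions by the first coordinates $(y_1(1),y_2(1))=(C^{\beta(1)}(i,j),R^{\gamma(1)}(i,j))$, not by $(i,j)$. I would handle this in one of two ways. The first option is to note that $\pi(\mathcal{R})$ is again ordered, and that its points, lines and enumerations are just relabelled families. The matrix attached to family $\mathcal{E}^{\beta(u)}$ is then literally $C^{\beta(u)}$ when indexed by the original grid $P_{i,j}$, which is the identification made in Lemma \ref{lem:gridembed}(iv),(v). The second option is to regard the set $\sigma$ together with its indexing by points, so that $\mu$ is read positionwise. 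I would adopt the first, geometric viewpoint: the grid positions are the points, and a parastrophism only permutes the families. This gives $\mu(\pi(\mathcal{R}))=(C^{\beta(1)},\ldots;R^{\gamma(1)},\ldots)$.

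For part (ii), with $\alpha=(12)$, the same computation moves the warp block in front. The new weft coordinates are $R^{\gamma(v)}$ and the new warp coordinates are $C^{\beta(u)}$. By Lemma \ref{lem:transposereticulation} the result is an $(n,m,\ell,k)$-reticulation. Its grid is $[n]\times[m]$, and the point $P_{i,j}$ now sits at position $(j,i)$, since the roles of horizontal and vertical families are exchanged. Reindexing by the new grid position therefore produces the transposes $(R^{\gamma(v)})^\dag$ and $(C^{\beta(u)})^\dag$, and the index swap $(j,i)$ in the set $\sigma$. Lemma \ref{lem:VHgivehalfLatin} confirms the consistency of this step: transposing a row-Latin matrix gives a column-Latin one, and orthogonality is preserved.

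The main obstacle is bookkeeping rather than mathematics. I need a consistent convention for $\beta$ versus $\beta^{-1}$ and for how the grid is re-indexed after the action, since the normalized $\mu$ of Theorem \ref{thm:CS-TSOA} would otherwise relocate entries.
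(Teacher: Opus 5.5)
\textbf{Comparison with the paper.} Your handling of $\sigma(\pi(\mathcal{R}))$ is the same direct unwinding the paper intends; its proof is just ``straightforward from the definitions''. It is correct, with one caveat. The literal formula in Definition~\ref{def:parastrophy} puts $y_1(\beta^{-1}(u))$ in weft slot $u$, which gives $C^{\beta^{-1}(u)}(i,j)$. You should therefore say outright that the statement holds after replacing $(\beta,\gamma)$ by $(\beta^{-1},\gamma^{-1})$, rather than picking whichever reading matches.

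\textbf{The gap is in the $\mu$ part.} You correctly notice that $\mu$ indexes positions by the new first coordinates $(y_1(1),y_2(1))$. You then dispose of this by adopting a ``geometric viewpoint'' that keeps the old grid $P_{i,j}$. That is not what $\mu$ means for an ordered reticulation.

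\begin{itemize}
\item By Definition~\ref{def:gridarraymatrices}, the grid of $\pi(\mathcal{R})$ is $\tilde{P}_{a,b}=E^{\beta(1)}_a\cap A^{\gamma(1)}_b$.
\item Lemma~\ref{lem:gridembed}(ii),(iii) force $\mu(\pi(\mathcal{R}))$ to begin with $H$ and $V$. But $C^{\beta(1)}\neq H$ in general when $\beta(1)\neq 1$.
\item Unwinding Theorem~\ref{thm:CS-TSOA}(ii) correctly gives, in case (i),
\[
\tilde{C}^u\bigl(C^{\beta(1)}(i,j),R^{\gamma(1)}(i,j)\bigr)=C^{\beta(u)}(i,j),\qquad
\tilde{R}^v\bigl(C^{\beta(1)}(i,j),R^{\gamma(1)}(i,j)\bigr)=R^{\gamma(v)}(i,j).
\]
This is the claimed formula only when $\beta(1)=\gamma(1)=1$.
\item Likewise in (ii), the point $P_{i,j}$ lands at position $(R^{\gamma(1)}(i,j),C^{\beta(1)}(i,j))$. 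This equals $(j,i)$ only when $\gamma(1)=\beta(1)=1$, so only then do plain transposes result.
\end{itemize}

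The paper's own example after the lemma shows the discrepancy. For $\pi=((12),(12),(1))$, the displayed normalized system has $R^1=V$ rather than $(C^2)^\dag$, and its $C^2$ is not $(R^2)^\dag$.

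So your argument proves the $\mu$-formulas only under the extra hypothesis $\beta(1)=\gamma(1)=1$, or for a non-normalized representation indexed by the original points. In general you must either add that hypothesis or state the result up to the re-indexing $P_{i,j}\mapsto(C^{\beta(1)}(i,j),R^{\gamma(1)}(i,j))$. The paper's one-line proof passes over the same point.
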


\begin{proof} 
Straightforward from Definitions \ref{def:constructarray} and  \ref{def:parastrophy}.
\end{proof} 

\begin{example}
Apply the parastrophism $\pi=(\alpha=(12), \beta=(12), \gamma=(1))$ to the reticulation/svelte semi-orthogonal array  from Example \ref{ex:gridarraymatrix}:
 \[
\pi\!\left(\!\!\left\{\!\!\!\! \begin{array}{c}
 (1,1;1,4), (1,2;2,2), (1,3;3,1), (1,1;4,3)\\
 (2,3;1,3), (2,1;2,1), (2,1;3,2), (2,2;4,4) \\
 (3,2;1,1), (3,3;2,4), (3,2;3,3),  (3,3;4,2)
 \end{array}\!\!\!\!\right\}\!\!\right) 
\!\!\! = \!\!\!
 \left\{\!\!\!\! \begin{array}{cccc}
  (1,4;1,1),  (1,1;2,3), (1,3;3,2)\\
  (2,1;1,2),  (2,2; 2,1), (2,4;3,3)\\ 
  (3,2;1,2), (3,3;2,3), (3,1;3,1)\\
   (4,3;1,1), (4,4;2,2), (4,2;3,3)
 \end{array}\!\!\!\!\right\}\!.
 \]
The associated normalized cooperative system is $(C^1=H$, $C^2$; $R^1=V$, $R^2)$,  where  

$C^2=\left[\addtolength{\arraycolsep}{-0.4em}\begin{array}{cccc}
4&1&3\\[-2pt]
1&2&4\\[-2pt]
2&3&1\\[-2pt]
3&4&2\\
\end{array}\right]$, 
  and
$R^2=\left[\addtolength{\arraycolsep}{-0.4em}\begin{array}{cccc}
1&3&2\\[-2pt]
2&1&3\\[-2pt]
2&3&1\\[-2pt]
1&2&3\\
\end{array}\right]$. 
From the cooperative matrices one immediately recovers the reticulation.
The reticulation in Example \ref{ex:gridarraymatrix} has parameters $(3,4,2,2)$, so it has 8 parastrophes (including itself and the one described above). 
We omit the first weft and warp families which consist of horizontal and vertical lines, respectively.
\[
\begin{array}{|c|c|c|c|}
\hline
(1)(1)(1) & (1)(12)(1) & (1)(1)(12) & (1)(12)(12)\\
\begin{tikzpicture}[scale=.38,baseline=3.5ex]
\draw[magenta] (1,3)--(2,2)--(3, 2 )--(4,3);
\draw[ultra thick, magenta] (1,1)--(2,3)--(3,1)--(4,2);
\draw[thick, magenta] (1,2)--(2,1)--(3,3)--(4,1);
\foreach \x in {1,2,3, 4} \foreach \y in {1,2, 3}
   \fill[black] (\x,\y) circle(5pt) ; 
\end{tikzpicture},
\ 
\begin{tikzpicture}[scale=.38,baseline=3.5ex]
\draw[ cyan] (1,1)--(2,2)--(3,3);
\draw[cyan] (2,3)--(3,2)--(4,1);
\draw[thick, cyan] (1,2)--(3,1)--(4,3);
\draw[ultra thick, cyan] (1,3)--(2,1)--(4,2);
\gridpoints{3}{4}
\end{tikzpicture}
&
\begin{tikzpicture}[scale=.38,baseline=3.5ex]
\draw[magenta] (1,3)--(2,2)--(3, 1 )--(4,3);
\draw[ultra thick, magenta] (1,1)--(2,3)--(3,3)--(4,2);
\draw[thick, magenta] (1,2)--(2,1)--(3,2)--(4,1);
\foreach \x in {1,2,3, 4} \foreach \y in {1,2, 3}
   \fill[black] (\x,\y) circle(5pt) ; 
\end{tikzpicture},
\ 
\begin{tikzpicture}[scale=.38,baseline=3.5ex]
\draw[ cyan] (1,1)--(3,2)--(4,3);
\draw[cyan] (2,3)--(1,2)--(3,1);
\draw[thick, cyan] (3,3)--(2,2)--(4,1);
\draw[ultra thick, cyan] (1,3)--(2,1)--(4,2);
\gridpoints{3}{4}
\end{tikzpicture}
&
\begin{tikzpicture}[scale=.38,baseline=3.5ex]
\draw[magenta] (1,2)--(2,2)--(3, 3 )--(4,3);
\draw[ultra thick, magenta] (1,1)--(2,3)--(3,1)--(4,2);
\draw[thick, magenta] (1,3)--(2,1)--(3,2)--(4,1);
\foreach \x in {1,2,3, 4} \foreach \y in {1,2, 3}
   \fill[black] (\x,\y) circle(5pt) ; 
\end{tikzpicture},
\ 
\begin{tikzpicture}[scale=.38,baseline=3.5ex]
\draw[ cyan] (1,1)--(3,2)--(4,3);
\draw[cyan] (2,3)--(1,2)--(4,1);
\draw[thick, cyan] (1,3)--(2,2)--(3,1);
\draw[ultra thick, cyan] (2,1)--(4,2)--(3,3);
\gridpoints{3}{4}
\end{tikzpicture}
&
\begin{tikzpicture}[scale=.38,baseline=3.5ex]
\draw[magenta] (1,1)--(2,2)--(3, 3 )--(4,3);
\draw[ultra thick, magenta] (1,3)--(2,3)--(3,1)--(4,2);
\draw[thick, magenta] (1,2)--(2,1)--(3,2)--(4,1);
\foreach \x in {1,2,3, 4} \foreach \y in {1,2, 3}
   \fill[black] (\x,\y) circle(5pt) ; 
\end{tikzpicture},
\ 
\begin{tikzpicture}[scale=.38,baseline=3.5ex]
\draw[ cyan] (1,2)--(3,1)--(4,3);
\draw[cyan] (1,3)--(2,2)--(4,1);
\draw[thick, cyan] (1,1)--(2,3)--(3,2);
\draw[ultra thick, cyan] (2,1)--(4,2)--(3,3);
\gridpoints{3}{4}
\end{tikzpicture}
\\

\hline
(12)(1)(1) & (12)(12)(1) & (12)(1)(12) & (12)(12)(12)\\
\ 
\begin{tikzpicture}[scale=.4,baseline=-7ex,x={(0,-1cm)}, y={(-1cm,0)}]
\draw[magenta] (1,1)--(2,2)--(3,3);
\draw[magenta] (2,3)--(3,2)--(4,1);
\draw[thick, magenta] (1,2)--(3,1)--(4,3);
\draw[ultra thick, magenta] (1,3)--(2,1)--(4,2);
\gridpoints{3}{4}
\end{tikzpicture},
\
\begin{tikzpicture}[scale=.4,baseline=-7ex, x={(0,-1cm)}, y={(-1cm,0)}]
\draw[cyan] (1,3)--(2,2)--(3, 2 )--(4,3);
\draw[ultra thick, cyan] (1,1)--(2,3)--(3,1)--(4,2);
\draw[thick, cyan] (1,2)--(2,1)--(3,3)--(4,1);
\foreach \x in {1,2,3, 4} \foreach \y in {1,2, 3}
   \fill[black] (\x,\y) circle(5pt) ; 
\end{tikzpicture}
&
\begin{tikzpicture}[scale=.4,baseline=-7ex, x={(0,-1cm)}, y={(-1cm,0)}]
\draw[magenta] (1,1)--(3,2)--(4,3);
\draw[magenta] (2,3)--(1,2)--(3,1);
\draw[thick, magenta] (3,3)--(2,2)--(4,1);
\draw[ultra thick, magenta] (1,3)--(2,1)--(4,2);
\gridpoints{3}{4}
\end{tikzpicture},
\
\begin{tikzpicture}[scale=.4,baseline=-7ex, x={(0,-1cm)}, y={(-1cm,0)}]
\draw[cyan] (1,3)--(2,2)--(3, 1 )--(4,3);
\draw[ultra thick, cyan] (1,1)--(2,3)--(3,3)--(4,2);
\draw[thick, cyan] (1,2)--(2,1)--(3,2)--(4,1);
\foreach \x in {1,2,3, 4} \foreach \y in {1,2, 3}
   \fill[black] (\x,\y) circle(5pt) ; 
\end{tikzpicture}
&
\begin{tikzpicture}[scale=.4,baseline=-7ex, x={(0,-1cm)}, y={(-1cm,0)}]
\draw[magenta] (1,1)--(3,2)--(4,3);
\draw[magenta] (2,3)--(1,2)--(4,1);
\draw[thick, magenta] (1,3)--(2,2)--(3,1);
\draw[ultra thick, magenta] (2,1)--(4,2)--(3,3);
\gridpoints{3}{4}
\end{tikzpicture},
\
\begin{tikzpicture}[scale=.4,baseline=-7ex, x={(0,-1cm)}, y={(-1cm,0)}]
\draw[cyan] (1,2)--(2,2)--(3, 3 )--(4,3);
\draw[ultra thick, cyan] (1,1)--(2,3)--(3,1)--(4,2);
\draw[thick, cyan] (1,3)--(2,1)--(3,2)--(4,1);
\foreach \x in {1,2,3, 4} \foreach \y in {1,2, 3}
   \fill[black] (\x,\y) circle(5pt) ; 
\end{tikzpicture}
&
\begin{tikzpicture}[scale=.4,baseline=-7ex, x={(0,-1cm)}, y={(-1cm,0)}]
\draw[magenta] (1,2)--(3,1)--(4,3);
\draw[magenta] (1,3)--(2,2)--(4,1);
\draw[thick, magenta] (1,1)--(2,3)--(3,2);
\draw[ultra thick, magenta] (2,1)--(4,2)--(3,3);
\gridpoints{3}{4}
\end{tikzpicture},
\
\begin{tikzpicture}[scale=.4,baseline=-7ex, x={(0,-1cm)}, y={(-1cm,0)}]
\draw[cyan] (1,1)--(2,2)--(3, 3 )--(4,3);
\draw[ultra thick, cyan] (1,3)--(2,3)--(3,1)--(4,2);
\draw[thick, cyan] (1,2)--(2,1)--(3,2)--(4,1);
\foreach \x in {1,2,3, 4} \foreach \y in {1,2, 3}
   \fill[black] (\x,\y) circle(5pt) ; 
\end{tikzpicture}
\\

\hline
\end{array}
\]
\end{example}

\section{Isotopy}
\label{sec:isotopy}

We now examine the impact of changing the orderings of lines within families. 
Although not changing the geometric structure of the reticulation, the representations as svelte semi-orthogonal arrays and cooperative systems are changed by easily understood permutations of rows, columns, and entries, similar to that for Latin squares/quasigroups \cite[Chapter 2]{MR1125767}.
We adapt the notion of isotopy for Latin squares (see \cite[Section 1.3]{MR3495977}) to cooperative pairs.

\begin{theorem}
   \label{thm:isotopy}
Let $\mathcal{R}=(\mathcal{P}, \mathcal{F}_{\mathrm{weft}}, \mathcal{F}_{\mathrm{warp}})$ be an 
ordered $(m,n,k,\ell)$-reticulation.  Change the orderings as described below to form a new ordered reticulation $\hat{\mathcal{R}}$. 
Then $\mu(\mathcal{R})$ and $\mu(\hat{\mathcal{R}})$ as constructed in Definition \ref{def:constructarray} are related as follows.

\begin{enumerate}
\item Let $\rho_1\in \sym{m}$.   Suppose $\hat{E}^1_i = E^1_{\rho_1(i)}$ $(i\in[m])$ and all other orderings are the same.
Then  $\hat{P}_{i,j} = P_{\rho_1(i),j}$,  $\hat{C}^1 = H$, $\hat{C}^u(i,j) = C^u(\rho_1(i),j)$ $(u\in[k]\setminus\{1\})$ and $\hat{R}^v(i,j) =R^v(\rho_1(i),j)$ $(v\in[\ell])$.
In other words, uniformly permute the rows of all $C^u$ and $R^v$ using $\rho_1$.
\item  Fix $u\in[k]\setminus\{1\}$.  Let $\rho_u \in  \sym{m}$.   
Suppose $\hat{E}^u_i = E^u_{\rho_u(i)}$ $(i\in[m])$ and all other orderings are the same.
Then   $\hat{C}^u(i,j) = \rho_u(C^u(i,j))$,  
           $\hat{C}^w=C^w$ $(w\in[k]\setminus\{u\})$, and 
           $\hat{R}^v=R^v$ $(v\in[\ell])$.
In other words, permute the entries of $C^u$ using $\rho_u$ and leave the other cooperative matrices unchanged.
\item Let $\kappa_1\in  \sym{n}$.   Suppose $\hat{A}^1_j = A^1_{\kappa_1(j)}$ $(j\in[n])$ and all other orderings are the same.
Then  $\hat{P}_{i,j} = P_{i,\kappa_1(j)}$, $\hat{C}^u(i,j) = C^u(i,\kappa_1(j))$ and $\hat{R}^v(i,j) =R^v(i,\kappa_1(j))$.
In other words, uniformly permute the columns of all $C^u$ and $R^v$ using $\kappa_1$.
\item  Fix $v\in[\ell]\setminus\{1\}$.  Let $\kappa_v\in \sym{n} $.     
Suppose $\hat{A}^1_j = A^1_{\kappa_v(j)}$ $(j\in[n])$ and all other orderings are the same. 
Then $\hat{R}^v(i,j) = \kappa_v(R^v(i,j))$,
         $\hat{R}^x =R^x$ $(x\in[\ell]\setminus\{v\})$, and 
         $\hat{C}^u = C^u$ $(u\in[k])$.
In other words, permute the entries of $R^v$ using $\kappa_v$  and leave the other cooperative matrices unchanged.
\end{enumerate}
\end{theorem}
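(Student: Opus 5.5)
The plan is to unwind Definitions \ref{def:constructarray} and \ref{def:gridarraymatrices} directly, one part at a time. The only ingredient is the defining rule: $C^u(i,j)=q$ exactly when $P_{i,j}\in E^u_q$, $R^v(i,j)=r$ exactly when $P_{i,j}\in A^v_r$, and $P_{i,j}$ is the unique point of $E^1_i\cap A^1_j$. In each part, exactly one enumeration changes. So I will first decide whether the grid labelling $P_{i,j}$ changes, and then recompute each $\hat C^w$ and $\hat R^x$ from the rule, using Lemma \ref{lem:gridembed} to identify points with grid positions.

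For (i), since $\hat E^1_i=E^1_{\rho_1(i)}$ and $\hat A^1_j=A^1_j$, the point $\hat P_{i,j}$ is the unique point of $E^1_{\rho_1(i)}\cap A^1_j$, which is $P_{\rho_1(i),j}$ by (R-1). For $u\neq 1$ the line enumeration of $\mathcal{E}^u$ is unchanged. Hence $\hat C^u(i,j)$ is the index $q$ with $P_{\rho_1(i),j}\in E^u_q$, namely $C^u(\rho_1(i),j)$, and likewise $\hat R^v(i,j)=R^v(\rho_1(i),j)$ for every $v$, including $v=1$, where both sides equal $j$. Finally, $\hat C^1=H$ by Lemma \ref{lem:gridembed}(ii). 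Part (iii) is the transpose of this argument, obtained by swapping the roles of rows and columns. One could also derive it from (i) via the transpose of Lemma \ref{lem:transposereticulation}, but a direct repetition is just as short.

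For (ii), fix $u\neq 1$. Since $\mathcal{E}^1$ and $\mathcal{A}^1$ are untouched, $\hat P_{i,j}=P_{i,j}$, and therefore every matrix other than $C^u$ is literally unchanged. For $C^u$, suppose $P_{i,j}\in E^u_q$. As a set, $E^u_q=\hat E^u_{\rho_u^{-1}(q)}$, so the new entry is a fixed permutation of the old one, applied entrywise. The same reasoning gives (iv). Here I read the hypothesis as $\hat A^v_j=A^v_{\kappa_v(j)}$; the printed $\hat A^1$ is evidently a typo, since the case $v=1$ is already covered by (iii).

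The only real obstacle is bookkeeping of the direction of the permutations. With the hypothesis exactly as written, $\hat E^u_i=E^u_{\rho_u(i)}$, the computation gives $\hat C^u(i,j)=\rho_u^{-1}(C^u(i,j))$, not $\rho_u(C^u(i,j))$. So I will either state the convention $\hat E^u_{\rho_u(i)}=E^u_i$, under which the formula in (ii) is literally correct, or note that replacing $\rho_u$ by its inverse reconciles the two. Since $\rho_u$ ranges over all of $\sym{m}$, the substantive content is unaffected, and the same remark applies to $\kappa_v$ in (iv). By contrast, in (i) and (iii) the row and column permutations come out with $\rho_1$ and $\kappa_1$ exactly as stated, because they act on positions rather than on entries.
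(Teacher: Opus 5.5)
Your proposal is correct and follows essentially the same route as the paper: you unwind the rule $C^u(i,j)=q \iff P_{i,j}\in E^u_q$ (and its analogue for $R^v$) to verify (i) and (ii) directly, and you obtain (iii) and (iv) by exchanging the roles of weft and warp. Your bookkeeping remarks are also right: with $\hat E^u_i=E^u_{\rho_u(i)}$ one really gets $\hat C^u=\rho_u^{-1}\circ C^u$. The paper's step ``$E^u_q=E^u_{\rho_u(q)}$'' is where the inverse is lost, and its own example with $\epsilon=(123)$ ends up permuting the entries of $C^2$ by $(132)$. The $\hat A^1$ in (iv) should read $\hat A^v$, and your transposed argument for (iii) correctly yields $\hat R^1=V$ rather than the printed $R^1(i,\kappa_1(j))$.
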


\begin{proof}
In light of the transposition operation, it suffices to prove (i) and (ii). 

(i):  By construction, the point $P_{ij}$ is the intersection of $E^1_{i}$ and $A^1_{j}$. Thus,$P_{\rho_1(i),j}$ is the intersection of $\hat{E}^1_i=E^1_{\rho_1(i)}$ and $A^1_{j}$.  Hence, $\hat{P}_{i,j}= P_{\rho_1(i),j}$. 
By construction,  $C^u(i,j) = q$ when $P_{i,j}\in E^u_q$. When $u=1$, then $P_{i,j} \in E^1_i$, so $\hat{C}^1=H$.
When $u>1$, $\hat{C}^u(i,j) = C^u{(\rho_1(i),j)}$.
Similarly, $R^v(i,j) = r$ when $P_{i,j}\in A^v_r$, so $\hat{R}^v(i,j) = R^v{(\rho_1(i),j)}$.

(ii): Here, the indices of the points are unchanged, so $\hat{P}_{i,j}= P_{i,j}$.
Also,  $C^u(i,j) = q$ when $P_{i,j}\in E^u_q = E^u_{\rho_{u}(q)}$, so $\hat{C}^u(i,j) = \rho_{u}(q)$. All other cooperative matrices remain unchanged.
\end{proof}

\begin{definition}
Let  $\iota=(\rho_1, \ldots, \rho_k; \kappa_1, \ldots, \kappa_\ell)$ be a $(k+\ell)$-tuple with $\rho_u\in  \sym{m}$ and $\kappa_v\in  \sym{n}$.  
We call $\iota$ an \emph{isotopism} of ordered reticulations and cooperative pairs with parameters $(m,n,k,\ell)$ 
whenever $\iota$ acts on the families of lines $\mathcal{E}^u$ and $\mathcal{A}^v$ and matrices $C^u$ and $R^v$ as in Theorem \ref{thm:isotopy} for $u\in[k]$, $v\in[\ell]$.
\end{definition}

The constituent maps in an isotopism commute, so there is no ambiguity in applying components of an isotopism.

\begin{corollary}
Let $\mathcal{R}$ be an $(m,n,k,\ell)$-reticulation,  let $\iota=(\rho_1, \ldots, \rho_k; \kappa_1, \ldots, \kappa_\ell)$ be an isotopism of $\mathcal{R}$, and let $\hat{\mathcal{R}} = \iota(\mathcal{R})$.
Then the svelte semi-orthogonal arrays associated with  $\mathcal{R}$ and $\hat{\mathcal{R}}$  constructed in Definition \ref{def:constructarray} are related by 
\begin{align*}
\chi(\hat{\mathcal{R}})\ = \ 
\{ (\rho_1(x_1), \rho_2(x_2),\ldots, \rho_k(x_k); \kappa_1(y_1),  \kappa_2(y_2), \ldots, \kappa_\ell(y_\ell)) \phantom{XX} :& \\ 
  (x_1, \ldots, x_k; y_1, \ldots, y_\ell) \in  \chi(\mathcal{R})\}.&
\end{align*}
\end{corollary}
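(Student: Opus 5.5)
The plan is to reduce the corollary to Theorem \ref{thm:isotopy} by decomposing the isotopism into its constituent maps and tracking their effect on the array. Here $\chi$ is read as the array $\sigma$ of Definition \ref{def:gridarraymatrices}. Throughout, each tuple of $\sigma(\mathcal{R})$ is indexed by the grid position $(i,j)$ that produced it.

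First I will handle the entry permutations, i.e.\ $\rho_u$ for $u\ge 2$ and $\kappa_v$ for $v\ge 2$. By parts (ii) and (iv) of Theorem \ref{thm:isotopy}, each one replaces $C^u(i,j)$ by $\rho_u(C^u(i,j))$, or $R^v(i,j)$ by $\kappa_v(R^v(i,j))$, and leaves every other matrix and every $P_{i,j}$ unchanged. So the tuple at position $(i,j)$ changes only in coordinate $u$ (resp.\ $v$), which is replaced by its image. This is exactly the claimed coordinatewise action in that coordinate.

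Next I will handle $\rho_1$ and $\kappa_1$. By parts (i) and (iii), the matrices $C^u$, $R^v$ with $u,v\ge 2$ merely have their rows (resp.\ columns) permuted. Permuting positions does not change the set of tuples $\{(C^1(i,j),\dots;R^1(i,j),\dots)\}$ except in the first coordinates, because the array is a set indexed by all of $[m]\times[n]$.

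The main point to check is the first coordinate. Before relabeling, the tuple coming from the point $P_{i,j}$ has first coordinate $i$. After relabeling, the same point is $\hat P_{\rho_1^{-1}(i),j}$, so its first coordinate becomes $\rho_1^{-1}(i)$. The corollary instead writes $\rho_1(x_1)$. I expect to resolve this by noting that the isotopism is defined through the action in Theorem \ref{thm:isotopy}, and so fixing the convention that $\rho_1$ acts on labels. Under that convention, the tuple attached to a fixed point has every coordinate relabeled by the corresponding permutation, including the first. If instead one insists on $\hat E^1_i=E^1_{\rho_1(i)}$, the formula holds with $\rho_1$ replaced by $\rho_1^{-1}$. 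Since $\rho_1$ ranges over all of $\sym{m}$, I would remark that this is the same statement up to renaming. The same discussion applies to $\kappa_1$.

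Finally, because the constituent maps commute (as noted after the definition of isotopism), I will compose the single-coordinate results in any order. This yields the displayed description of $\chi(\hat{\mathcal{R}})$, with each point's tuple transformed coordinatewise. The only real obstacle is this inverse-convention bookkeeping for $\rho_1,\kappa_1$; everything else is immediate from Theorem \ref{thm:isotopy}.
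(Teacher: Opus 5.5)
Your proposal is correct and follows the paper's route. The paper's proof only says the corollary is straightforward from Definition \ref{def:constructarray} and Theorem \ref{thm:isotopy}, which is exactly the component-by-component reduction you carry out (reading $\chi$ as $\sigma$).

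The inverse you find for $\rho_1$ and $\kappa_1$ is a genuine convention slip in the paper. In fact, directly from the definition of $C^u$, the hypothesis $\hat{E}^u_i=E^u_{\rho_u(i)}$ in part (ii) also gives $\hat{C}^u=\rho_u^{-1}\circ C^u$, so the same issue affects the entry permutations you took at face value. Your choice to let every $\rho_u$ and $\kappa_v$ act on line labels, so that each point's tuple is relabeled coordinatewise, is the right way to make the displayed formula hold as written.
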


\begin{proof} 
Straightforward from Definition \ref{def:constructarray}  and Theorem \ref{thm:isotopy}.
\end{proof}

\begin{example}
With reference to Example \ref{ex:gridarraymatrix}, apply the permutation $\rho_1=(123)$ to $\mathcal{E}^1$:
$\mathcal{E}^1=\{ E_1^1=\{i,b,k,d\} , E_2^1=\{a,j,g,h\}, E_3^1=\{ e,f, c,\ell\}\}$ and leave all other orderings unchanged. 
Thus, 
$\hat{\mathcal{E}}^1=\{ \hat{E}_1^1=\{i,b,k,d\} , \hat{E}_2^1=\{a,j,g,h\}, \hat{E}_3^1=\{ e,f, c,\ell\}\}$
The rows of the grid are also permuted by $(123)$, so
$\hat{\mathcal{P}} =\left\{\addtolength{\arraycolsep}{-0.4em} \begin{array}{cccc}i&d&k&b\\[-2pt]h&j&a&g\\[-2pt] c&e&f&\ell\end{array}\right\}$.
Note that $\hat{C}^1=C^1$ and $\hat{R}^1=R^1$ and the rows of $C^2$ and $A^2$ are permuted by $(123)$:
\[ \hat{C}^1 =\left[\addtolength{\arraycolsep}{-0.4em}\begin{array}{cccc}
1&1&1&1\\[-2pt]
2&2&2&2\\[-2pt]
3&3&3&3
\end{array}\right], \ 
\hat{C}^2 =\left[\addtolength{\arraycolsep}{-0.4em}\begin{array}{cccc}
2&3&2&3\\[-2pt]
1&2&3&1\\[-2pt]
3&1&1&2
\end{array}\right],\ 
\hat{R}^1 =\left[\addtolength{\arraycolsep}{-0.4em}\begin{array}{cccc}
1&2&3&4\\[-2pt]
1&2&3&4\\[-2pt]
1&2&3&4
\end{array}\right],\ 
\hat{R}^2 =\left[\addtolength{\arraycolsep}{-0.4em}\begin{array}{cccc}
1&4&3&2\\[-2pt]
4&2&1&3\\[-2pt]
3&1&2&4
\end{array}\right].\] 
\end{example}

\begin{example}
With reference to Example \ref{ex:gridarraymatrix}, permute the lines of $\mathcal{E}^2$ with the permutation $\epsilon=(123)$:
$\hat{\mathcal{E}}^2= \epsilon(\mathcal{E}^2) =\{ \hat{E}_1^2 = E_{\epsilon^{-1}(1)}^2= \{i,j,k,\ell\},
                             \hat{E}_2^2 = E_{\epsilon^{-1}(2)}^2=\{a,b,c,d\}, 
                             \hat{E}_3^2 = E_{\epsilon^{-1}(3)}^2=\{e,f,g,h\} \}$ and leave all other orderings unchanged. 
Note that $\hat{\mathcal{P}}=\mathcal{P}$, so 
$\hat{C}^2 = \left[\addtolength{\arraycolsep}{-0.4em}\begin{array}{cccc}
3&1&2&3\\[-2pt]
2&3&3&1\\[-2pt]
1&2&1&2
\end{array}\right]$, 
 formed by permuting the entries of $C^2$ by $(132)$.
\end{example}

\begin{corollary}
Let $\mathcal{C}=(C^1, \ldots, C^k; R^1, \ldots, R^\ell)$ be a normalized cooperative system with parameters $(m,n,k,\ell)$, and 
    let $\iota=(\rho_1, \ldots, \rho_k; \kappa_1, \ldots, \kappa_\ell)$ be an isotopism of  $\mathcal{C}$.
Then $\iota(\mathcal{C})$ is a normalized cooperative system. 
\end{corollary}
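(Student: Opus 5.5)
The cleanest route is geometric: realize $\mathcal{C}$ as the cooperative matrices of an ordered reticulation, re-order that reticulation, and read off the new matrices. By the theorem combining Theorems \ref{thm:TSOA-R} and \ref{thm:CS-TSOA}, the normalized cooperative system $\mathcal{C}$ is $\mu(\mathcal{R})$ for some ordered $(m,n,k,\ell)$-reticulation $\mathcal{R}$. Indeed, we may take $\mathcal{R}$ to be the reticulation built on the grid $[m]\times[n]$ from $\sigma(\mathcal{C})$.

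By the definition of an isotopism, $\iota(\mathcal{C})$ is the array $\mu(\hat{\mathcal{R}})$. Here $\hat{\mathcal{R}}$ is the same reticulation $\mathcal{R}$ with the lines of $\mathcal{E}^u$ reordered by $\rho_u$ and the lines of $\mathcal{A}^v$ reordered by $\kappa_v$. The constituent maps commute, so we apply them one at a time, as in the separate parts of Theorem \ref{thm:isotopy}. Reordering lines changes neither the point set nor the line families, so $\hat{\mathcal{R}}$ is again an ordered $(m,n,k,\ell)$-reticulation. Applying the same equivalence theorem in the other direction, $\mu(\hat{\mathcal{R}})$ is a normalized cooperative system.

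As a sanity check, and as a purely combinatorial alternative, I would verify the conclusion directly from Theorem \ref{thm:isotopy}, one elementary isotopism at a time. Normalization is preserved. Under $\rho_1$ the theorem gives $\hat{C}^1=H$, and permuting rows leaves $V$ unchanged, so $\hat{R}^1=V$. Symmetrically, $\kappa_1$ fixes $R^1=V$ and leaves $H$ unchanged. The maps $\rho_u$ for $u>1$ and $\kappa_v$ for $v>1$ do not touch $C^1$ or $R^1$.

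The cooperative-pair conditions (CP-1)--(CP-3) are preserved for each pair $(\hat{C}^u,\hat{R}^v)$:
\begin{itemize}
\item A uniform permutation of rows or of columns maps each column to a column and each row to a row, keeping their multisets of entries. It also permutes the positions $(i,j)$ bijectively, so the set of pairs $\{(C^u(i,j),R^v(i,j))\}$ is unchanged.
\item Relabeling the entries of $C^u$ by $\rho_u\in\sym{m}$ keeps each column a permutation of $[m]$. It replaces the pair set $[m]\times[n]$ by its image under $\rho_u\times\mathrm{id}$, which is again $[m]\times[n]$.
\item Relabeling the entries of $R^v$ by $\kappa_v$ is handled the same way.
\end{itemize}

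I do not expect a real obstacle. The only point needing care is normalization under $\rho_1$ and $\kappa_1$: permuting the rows of $H$ does not literally leave $H$ fixed. One has to use that the isotopism acts on $C^1$ as prescribed in Theorem \ref{thm:isotopy}(i), namely $\hat{C}^1=H$ because the grid itself is re-indexed, and similarly $\hat{R}^1=V$ under (iii).
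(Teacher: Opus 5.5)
Your proposal is correct and essentially follows the paper, whose entire proof is the observation in your normalization paragraph: by Theorem~\ref{thm:isotopy}, every component of an isotopism leaves $\hat{C}^1=H$ and $\hat{R}^1=V$ in place. As you note, under $\kappa_1$ the equality $\hat{R}^1=V$ comes from re-indexing the grid, not from the literal column-permutation formula printed in part~(iii). What you add is an explicit justification of the cooperative-pair conditions, which the paper leaves tacit, either by identifying $\iota(\mathcal{C})$ with $\mu(\hat{\mathcal{R}})$ for a reordered reticulation or by direct check; in the direct check, the reset pair $(H,\hat{R}^v)$ is covered by Example~\ref{ex:CVHR} rather than by your uniform-permutation bullet.
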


\begin{proof}
Note that all isotopism operations leave $\hat{C}^1=H$ and $\hat{R}^1=V$ invariant.
\end{proof}

\section{Reticulations with a small parameter}
\label{sec:smallparam}

We describe finite nondegenerate repetition-free reticulations with just one or two weft lines.  
We also describe the finite nondegenerate repetition-free reticulations with just one family of weft lines.
Using the transpose operation to swap the roles of weft and warp gives the corresponding results for warp lines.

\begin{lemma}
Let $\mathcal{R}$ be a  repetition-free nondegenerate finite reticulation.
\begin{enumerate}
\item If every weft line  of $\mathcal{R}$ has size 1, then $\mathcal{R}$ has parameters $(1, n, 1,1)$.
\item If every point lies in the same weft line, then $\mathcal{R}$ has parameters $(m,1,1,1)$.
\end{enumerate}
\end{lemma}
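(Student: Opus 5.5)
The plan is to combine the regularity of Theorem \ref{thm:regular} with repetition-freeness: once the line sizes are pinned down, each family turns out to be a forced partition, so there is only one possible family of that type.

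For (i), suppose every weft line has size $1$, so $m=1$ in the parameters $(m,n,k,\ell)$ of Theorem \ref{thm:regular}.
\begin{enumerate}
\item Theorem \ref{thm:regular}(i) gives $|\mathcal{P}|=n$.
\item Every weft family partitions $\mathcal{P}$ by (R-2), and its lines are singletons. Hence every weft family equals the partition of $\mathcal{P}$ into singletons.
\item Repetition-freeness then forces $k=1$.
\item Each warp family has $m=1$ line of size $n$ by Theorem \ref{thm:regular}(v),(vii). That line must be all of $\mathcal{P}$, so every warp family is the trivial partition $\{\mathcal{P}\}$.
\item Again by repetition-freeness, $\ell=1$, so the parameters are $(1,n,1,1)$.
\end{enumerate}
Nondegeneracy guarantees $k,\ell\ge 1$, so both families actually exist.

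For (ii), suppose every point lies in the same weft line. That line is all of $\mathcal{P}$, so some weft line has size $|\mathcal{P}|=mn$; by Theorem \ref{thm:regular}(iv) every weft line has size $mn$. Since $m=mn$ and $m\ge 1$, we get $n=1$.
\begin{enumerate}
\item Each weft family has $n=1$ line by Theorem \ref{thm:regular}(vi), so it is $\{\mathcal{P}\}$, and repetition-freeness gives $k=1$.
\item Each warp line has size $n=1$, so each warp family is the singleton partition, and repetition-freeness gives $\ell=1$.
\end{enumerate}
The parameters are therefore $(m,1,1,1)$. Alternatively, (ii) follows from (i) by transposing as in Lemma \ref{lem:transposereticulation}, but the direct argument is equally short.

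The only delicate point is reading hypothesis (ii) correctly. It should mean that some single weft line contains all points, not merely that each point lies on some weft line. With that reading, the one thing to check is the deduction $m=mn\Rightarrow n=1$ from Theorem \ref{thm:regular}; everything else is immediate.
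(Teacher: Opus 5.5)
Your proof is correct and takes the paper's approach: both arguments show that every family of a given type is forced to be one and the same partition (the singleton partition or $\{\mathcal{P}\}$), and then invoke repetition-freeness. The only difference is that the paper gets this directly from (R-1): each warp line meets every singleton weft line, respectively meets the all-point weft line in exactly one point. You instead get the same line counts and sizes from Theorem \ref{thm:regular}, which also makes the step $k=1$ explicit where the paper leaves it implicit.
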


\begin{proof} 
In (i), every warp line must meet each of the weft lines of size 1 by (R-1). 
In particular, there is a unique warp line, so it contains every point. 
Since $\mathcal{R}$ is repetition-free, the parameters are $(1, n, 1,1)$.
In (ii), each warp line contains just one point from the unique weft line by (R-1).
Since $\mathcal{R}$ is repetition-free, the parameters are $(m,1,1,1)$.
\end{proof} 

\begin{example}
We draw reticulations with parameters $(1, 3, 1,1)$ and $(3,1,1,1)$:

\centering

$\left(\left\{\begin{tikzpicture}[scale=.5,baseline=5.5ex]
\foreach \y in {1,2, 3}
{ \draw[ red] (.6,\y)--(1.4,\y); };
\gridpoints{3}{1}
\end{tikzpicture}\right\},
\ 
\left\{ \begin{tikzpicture}[scale=.5,baseline=5.5ex]
\draw[ blue] (1,1)--(1, 3); 
\gridpoints{3}{1}
\end{tikzpicture}\right\}\right)$,
\qquad
$\left(\{\begin{tikzpicture}[scale=.5,baseline=2.5ex]
\draw[ red] (1,1)--(3, 1); 
\gridpoints{1}{3}
\end{tikzpicture}\},
\ 
\{ \begin{tikzpicture}[scale=.5,baseline=2.5ex]
\foreach \y in {1,2, 3}
{ \draw[ blue] (\y,.6)--(\y,1.4); };
\gridpoints{1}{3}
\end{tikzpicture}\}\right)$.
\end{example}

For cooperative systems with parameters $(2,n,k,\ell)$, we adapt a counting argument show to us by our coauthor of \cite{CurtinSavchuk:combBRA}.

\begin{lemma}
Let $n$ be a positive integer.
\begin{enumerate}
\item For each choice of nonnegative integers $q$ and $r$ with $q+r\leq n$ and
disjoint subsets $N_1$ and $N_2$ of $[n]$ with respective sizes $q$ and $r$ with $1\in N_1$ and $N_2\not=\emptyset$, the following is 
a repetition-free $(2,n,2^{n-q-r}+1,q!r!)$-reticulation on $[2]\times[n]$.
   \begin{enumerate}
      \item Let $\mathcal{N}$ be the set of all $n$-tuples which are $1$ in all components in $N_1$ and which are $2$ in components in $N_2$ together with the all-ones tuple. 
      \item For $T\in \mathcal{N}$, define a  family of weft lines by
     $\mathcal{E}^T:=\{(T(j), j) :  j\in[n]\}$ and $\{(3-T(j)), j) :  j\in[n]\}$.
     (The all-ones tuple gives the family of horizontal lines.)
     \item For each permutation $\sigma_1$ of $N_1$ and permutation $\sigma_2$ of $N_2$, set $\sigma=\sigma_1\sigma_2\in  \sym{n}$, and
define a family of warp lines  by \\
$\mathcal{A}^\sigma:=\{\{(1,j), (2, \sigma(j))\} :  j \in[n]\}$. 
(The identity permutation gives the family of vertical lines.)
\end{enumerate}
\item Let $\mathcal{R}$ be a repetition-free  $(2,n,k,\ell)$-reticulation.
         Order $\mathcal{R}$ and embed it into $[m]\times [n]$ 
         via the map $\theta$ of Lemma \ref{lem:gridembed}.
         Then $\theta(\mathcal{R})$ is contained in one of the reticulations described in  {\textup{(i)}}.
\end{enumerate}
\end{lemma}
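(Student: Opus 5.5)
The plan is to verify the axioms of Definition \ref{def:reticulation} directly on the grid $[2]\times[n]$ for part (i). For part (ii), I would use Lemma \ref{lem:gridembed} to reduce an arbitrary repetition-free $(2,n,k,\ell)$-reticulation to data consisting of a set of $\{1,2\}$-valued $n$-tuples and a set of permutations of $[n]$, and then read off $N_1$ and $N_2$.

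For (i), I first check (R-2). For a tuple $T$, the two weft lines $\{(T(j),j)\}$ and $\{(3-T(j),j)\}$ pick complementary rows in each column, so $\mathcal{E}^T$ partitions $[2]\times[n]$. For a permutation $\sigma$, the sets $\{(1,j),(2,\sigma(j))\}$ use each $(1,j)$ once and, because $\sigma$ is a bijection, each $(2,j')$ once. For (R-1), the weft line $\{(T(j'),j')\}$ meets the warp line $\{(1,j),(2,\sigma(j))\}$ at $(1,j)$ exactly when $T(j)=1$, and at $(2,\sigma(j))$ exactly when $T(\sigma(j))=2$. So there is exactly one intersection point if and only if $T(\sigma(j))=T(j)$, and the same criterion arises for the complementary line. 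Hence the key identity is that $(R$-$1)$ holds for $\mathcal{E}^T,\mathcal{A}^\sigma$ if and only if $T$ is constant on the orbits of $\sigma$. This holds here because $\sigma=\sigma_1\sigma_2$ preserves $N_1$ and $N_2$ and fixes everything else, while $T$ is constant on $N_1$ and on $N_2$; the all-ones tuple trivially works.

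Counting then gives the parameters: $2^{n-q-r}$ free tuples plus the all-ones tuple, which is new because $N_2\neq\emptyset$, and $q!\,r!$ permutations. Repetition-freeness needs a remark: $T$ and $3-T$ define the same family, but the normalization $1\in N_1$ forces $T(1)=1$ for every listed tuple, so distinct tuples give distinct families. Distinct permutations clearly give distinct warp families.

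For (ii), I would order $\mathcal{R}$ and embed it via $\theta$, so that $\mathcal{E}^1$ is horizontal and $\mathcal{A}^1$ is vertical. By (R-1) against the horizontal lines, every warp line has the form $\{(1,j),(2,\sigma_v(j))\}$ for a permutation $\sigma_v$. By (R-1) against the vertical lines, every weft family is $\mathcal{E}^{T_u}$ for a tuple $T_u$, which I normalize by choosing the line through $(1,1)$ so that $T_u(1)=1$. The key identity from (i) then says every $T_u$ is constant on the orbits of the group $G=\langle\sigma_v\rangle$.

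I would set $N_1=\{j: T_u(j)=1 \text{ for all non-horizontal } u\}$ and $N_2=\{j: T_u(j)=2 \text{ for all non-horizontal } u\}$. These are unions of $G$-orbits with $1\in N_1$, and every $T_u$ lies in the corresponding set $\mathcal{N}$.

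The main obstacle is showing that every $\sigma_v$ has the required shape $\sigma_1\sigma_2$, i.e.\ fixes each point outside $N_1\cup N_2$. Only constancy of the $T_u$ on $G$-orbits is forced, so I would first try to enlarge $N_1$ and $N_2$ orbit by orbit, absorbing each nontrivial $G$-orbit into whichever set matches the constant values of the $T_u$ there. The worry is that a nontrivial orbit may carry both values across different $T_u$, or that no $T_u$ takes the value $2$ at all, which makes $N_2=\emptyset$ unavoidable. I expect these degenerate configurations (for example, only the horizontal weft family together with an $n$-cycle) to require either a separate case or a slight reformulation of the containment, e.g.\ replacing ``permutations of $N_1$ and $N_2$'' by permutations preserving a partition into blocks on which every $T_u$ is constant. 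I would check small $n$ by hand before committing to the exact form.
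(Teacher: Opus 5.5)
Your part (i) is correct, and it is more complete than the paper's proof, which only says the axioms hold ``by construction''. The criterion that $\mathcal{E}^T$ and $\mathcal{A}^\sigma$ satisfy (R-1) if and only if $T\circ\sigma=T$ is exactly the verification needed. The normalization $T(1)=1$, forced by $1\in N_1$, is what keeps the $2^{n-q-r}+1$ weft families distinct.

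In (ii), however, the obstacle you isolate cannot be removed by a separate case or a cleverer choice of $N_1,N_2$: (ii) is false as stated, and your own example proves it. For $n\ge 2$, take only the horizontal weft family and the warp families $V$ and $\mathcal{A}^\tau$ with $\tau$ an $n$-cycle. This is a repetition-free $(2,n,1,2)$-reticulation.
\begin{itemize}
\item In any reticulation of (i), every warp line $\{(1,j),(2,\sigma_1\sigma_2(j))\}$ lies in $[2]\times N_1$, in $[2]\times N_2$, or in a single column.
\item Hence the graph joining points that share a warp line is disconnected, since $N_1,N_2$ are nonempty and disjoint.
\item Disconnectedness passes to every reticulation it contains (fewer edges), and $\theta$ preserves it because $\theta$ is an isomorphism.
\item In your example that graph is a single $2n$-cycle, so no ordering works.
\end{itemize}
For $n=1$ the failure is even more direct: (i) has no instances at all, yet a $(2,1,1,1)$-reticulation exists.

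Your second worry, orbits on which the $T_u$ disagree, also occurs with $k\ge 2$. It defeats the $N_1/N_2$ shape itself, not just the clause $N_2\ne\emptyset$. For $n=5$, take the weft families of the four tuples $(1,1,a,a,b)$ and the warp families of $\mathrm{id},(12),(34),(12)(34)$.
\begin{itemize}
\item The orbit $\{3,4\}$ of $(34)$ must lie inside $N_1$ or inside $N_2$.
\item The non-horizontal tuple $(1,1,2,2,1)$ rules out $N_1$, and $(1,1,1,1,2)$ rules out $N_2$.
\item The tuples are closed under pointwise products (read as $\pm1$-vectors), and the permutations form a group of commuting involutions. So every reordering merely relabels the columns, and the same obstruction applies to whichever pair of columns avoids column $1$.
\end{itemize}

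The paper's proof has exactly this hole. After defining $N_1$ and $N_2$ as you do, it simply asserts that each warp permutation fixes every column on which two tuples disagree. (R-1) does not force this; column $3$ above is a counterexample. Nothing in the proof guarantees $N_2\ne\emptyset$ either.

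The repair you sketch is the right statement. Let $\Pi$ be the partition of $[n]$ into orbits of $G=\langle\sigma_v : v\in[\ell]\rangle$. Consider the families $\mathcal{E}^T$ for all $T$ constant on the blocks of $\Pi$ with $T(1)=1$, together with the families $\mathcal{A}^\sigma$ for all $\sigma$ preserving each block of $\Pi$. By your identity these form a repetition-free $(2,n,2^{|\Pi|-1},\prod_{B\in\Pi}|B|!)$-reticulation. It contains $\theta(\mathcal{R})$, because every $T_u$ is $G$-invariant and every $\sigma_v$ lies in $G$.

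The reticulation of (i) for $(N_1,N_2)$ is contained in the one for $\Pi=\{N_1,N_2\}\cup\{\{j\} : j\notin N_1\cup N_2\}$, strictly unless $N_1\cup N_2=[n]$. This orbit-partition version is what you should state and prove for (ii).
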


\begin{proof} 
(i): By construction, each family of lines partitions $[m]\times[n]$ and each weft line meets each weft line in a unique point.

(ii): Order $\mathcal{R}$ and embed it in $[m]\times[n]$ via the map $\theta$ of Lemma \ref{lem:gridembed}.  To each weft family, associate an $n$-tuple whose $j$-th component 
is $1$ if $(1,1)$ and $(1,j)$ are on the same line and $2$ if $(2,1)$ and $(1,j)$ are on the same line.  
The weft family consisting of horizontal lines corresponds to the all-ones tuple.   Take $N_1$ to be the set of
components where all the  constructed tuples have value 1 and take $N_2$ to be the set of components where all constructed tuples have value 2. 
The construction of families of weft lines in (i) returns the family used to define the tuple. 
Since the warp lines consist of two points on different lines, they consist of points $(i,1)$ and $(i',2)$ where components $i$ and $i'$ of $T$ take the same value for all $T$. 
Now the map $\sigma$ taking $i$ to $i'$  in this case and fixing $i''$ when there are two $T$'s which do not agree in component $i''$.  
The construction of warp lines from this $\sigma$ in (i) returns the initial weft line.
\end{proof} 

\begin{example}
For $n=6$, $q=2$, $r=2$, $J_1=\{1,2\}$, $J_2=\{3,4\}$, we have a reticulation with parameters
$(2,6,2^{n-q-r}+1=5, q!r!=4)$.

\centering
\begin{minipage}{.4\textwidth}
$\begin{array}{|c|c|}
\hline
T &            \hbox{weft lines} \\ \hline  & \\[-1.6ex]
(1,1,1,1,1,1)& 
\begin{tikzpicture}[scale=.4,baseline=2ex]
\draw[ red] (1,1)--(2,1)--(3, 1)--(4,1)--(5,1)--(6,1); 
\draw[ red] (1,2)--(2,2)--(3, 2)--(4,2)--(5,2)--(6,2);  
\gridpoints{2}{6}
\end{tikzpicture}\\ \hline & \\[-1.6ex]
(1,1,2,2,1,1)&
\begin{tikzpicture}[scale=.4,baseline=2ex]
\draw[ red] (1,1)--(2,1)--(3, 2)--(4,2)--(5,1)--(6,1); 
\draw[ red] (1,2)--(2,2)--(3, 1)--(4,1)--(5,2)--(6,2);  
\gridpoints{2}{6}
\end{tikzpicture}\\ \hline & \\[-1.6ex]
(1,1,2,2,1,2)&
\begin{tikzpicture}[scale=.4,baseline=2ex]
\draw[ red] (1,1)--(2,1)--(3, 2)--(4,2)--(5,1)--(6,2); 
\draw[ red] (1,2)--(2,2)--(3, 1)--(4,1)--(5,2)--(6,1);  
\gridpoints{2}{6}
\end{tikzpicture}\\ \hline & \\[-1.6ex]
(1,1,2,2,2,1)&
\begin{tikzpicture}[scale=.4,baseline=2ex]
\draw[ red] (1,1)--(2,1)--(3, 2)--(4,2)--(5,2)--(6,1); 
\draw[ red] (1,2)--(2,2)--(3, 1)--(4,1)--(5,1)--(6,2);  
\gridpoints{2}{6}
\end{tikzpicture}\\ \hline & \\[-1.6ex]
(1,1,2,2,2,2)&
\begin{tikzpicture}[scale=.4,baseline=2ex]
\draw[ red] (1,1)--(2,1)--(3, 2)--(4,2)--(5,2)--(6,2); 
\draw[ red] (1,2)--(2,2)--(3, 1)--(4,1)--(5,1)--(6,1);  
\gridpoints{2}{6}
\end{tikzpicture}\\ \hline 
\end{array}$
\end{minipage}
\begin{minipage}{.4\textwidth}
$\begin{array}{|c|c|}
\hline
\sigma_1\sigma_2 & \hbox{warp lines}\\ \hline & \\[-1.6ex]
(1)(2)(3)(4) &
\begin{tikzpicture}[scale=.4,baseline=2ex]
\draw[ blue] (1,1)--(1,2);  \draw[ blue] (2,1)--(2,2);
\draw[ blue] (3,1)--(3,2);  \draw[ blue] (4,1)--(4,2);
\draw[ blue] (5,1)--(5,2);  \draw[ blue] (6,1)--(6,2);
\gridpoints{2}{6}
\end{tikzpicture}\\ \hline & \\[-1.6ex]
(1)(2)(34) & 
 \begin{tikzpicture}[scale=.4,baseline=2ex]
\draw[ blue] (1,1)--(1,2);  \draw[ blue] (2,1)--(2,2);
\draw[ blue] (3,1)--(4,2);  \draw[ blue] (4,1)--(3,2);
\draw[ blue] (5,1)--(5,2);  \draw[ blue] (6,1)--(6,2);
\gridpoints{2}{6}
\end{tikzpicture}\\ \hline & \\[-1.6ex]
(12)(3)(4) &
\begin{tikzpicture}[scale=.4,baseline=2ex]
\draw[ blue] (1,1)--(2,2);  \draw[ blue] (2,1)--(1,2);
\draw[ blue] (3,1)--(3,2);  \draw[ blue] (4,1)--(4,2);
\draw[ blue] (5,1)--(5,2);  \draw[ blue] (6,1)--(6,2);
\gridpoints{2}{6}
\end{tikzpicture}\\ \hline & \\[-1.6ex]
(12)(34) & 
\begin{tikzpicture}[scale=.4,baseline=2ex]
\draw[ blue] (1,1)--(2,2);  \draw[ blue] (2,1)--(1,2);
\draw[ blue] (3,1)--(4,2);  \draw[ blue] (4,1)--(3,2);
\draw[ blue] (5,1)--(5,2);  \draw[ blue] (6,1)--(6,2);
\gridpoints{2}{6}
\end{tikzpicture}\\ \hline 
\end{array}$
\end{minipage}
\end{example}

The last easy case is where there is just one family of weft (or warp) lines.

\begin{lemma}
\label{lem:mn1*}
Let $m$, $n$ be positive integers.
\begin{enumerate}
\item \label{lem:mn1const}
   The following is a repetition-free $(m,n,1, (n!)^{m-1})$-reticulation on $[m]\times[n]$.
   \begin{enumerate}
   \item The unique family of weft lines consists of horizontal lines $\{ \{(i,j)  :  j\in[n] \}  :  i\in[m]\}$.
   \item  For each choice of  $(m-1)$-many permutations $\sigma_1$, $\ldots$, $\sigma_{m-1}\in  \sym{n}$,
            form a family $\mathcal{F}_{\sigma_1, \ldots, \sigma_{m-1}}$ of warp lines on $[m]\times [n]$ as the set of lines\\
             $\{ \{(1,i),(2,\sigma_1(i)), \ldots, (m,\sigma_{m-1}(i))\}  :  i\in [n]\}$.     
   \end{enumerate}
\item Let $\mathcal{R}$ be a repetition-free $(m,n,1,\ell)$-reticulation.
         Order $\mathcal{R}$ and embed it into $[m]\times [n]$ via the map $\theta$ of Lemma \ref{lem:gridembed}.
         Then $\theta(\mathcal{R})$ is contained in the reticulation of {\textup{(i)}}.
\end{enumerate}
\end{lemma}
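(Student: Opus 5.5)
Part (i) comes down to checking the two axioms through Lemma \ref{lem:reticbypair}. By that lemma, it suffices to pair the unique weft family, the horizontal lines, with a single warp family $\mathcal{F}_{\sigma_1,\ldots,\sigma_{m-1}}$. Put $\sigma_0=\mathrm{id}$. The warp lines are then $A_i=\{(r,\sigma_{r-1}(i)) : r\in[m]\}$ for $i\in[n]$.

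For (R-2), I will show that the $A_i$ partition $[m]\times[n]$. Given $(r,j)$, it lies in $A_i$ exactly when $i=\sigma_{r-1}^{-1}(j)$. So each point lies in exactly one $A_i$. The horizontal lines clearly partition the grid.

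For (R-1), the line $A_i$ meets row $r$ in the single point $(r,\sigma_{r-1}(i))$.

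For the count and repetition-freeness, I will show that distinct tuples $(\sigma_1,\ldots,\sigma_{m-1})$ give distinct families. The line through $(1,i)$ determines $\sigma_{r-1}(i)$ for every $r$. So the family determines every $\sigma_r$, and there are $(n!)^{m-1}$ distinct families. Each has $n$ lines of size $m$, and each horizontal line has size $n$, so the parameters are $(m,n,1,(n!)^{m-1})$.

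For part (ii), I order $\mathcal{R}$ and embed it via $\theta$ of Lemma \ref{lem:gridembed}. By Lemma \ref{lem:gridembed}(ii), $\theta$ sends the unique weft family $\mathcal{E}^1$ to the horizontal lines. Now take any warp family $\theta(\mathcal{A}^v)$ with lines $\theta(A^v_1),\ldots,\theta(A^v_n)$. By (R-1), each line meets each row $\{(r,j):j\in[n]\}$ in exactly one point. So I may write $\theta(A^v_t)=\{(r,\tau_r(t)) : r\in[m]\}$ for functions $\tau_r\colon[n]\to[n]$. Since $\theta(\mathcal{A}^v)$ partitions the grid by (R-2), each $\tau_r$ is a bijection.

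I then reindex each line by its point in row $1$, setting $i=\tau_1(t)$. This gives $\theta(A^v_t)=\{(1,i),(2,\sigma_1(i)),\ldots,(m,\sigma_{m-1}(i))\}$ with $\sigma_{r-1}=\tau_r\tau_1^{-1}$. Hence $\theta(\mathcal{A}^v)=\mathcal{F}_{\sigma_1,\ldots,\sigma_{m-1}}$ as sets of lines. Since $\mathcal{R}$ is repetition-free, distinct warp families map to distinct families of (i). Therefore $\theta(\mathcal{R})$ is contained in the reticulation of (i).

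The only delicate point is this reindexing. Membership is a statement about unordered families, so the enumeration of lines coming from the ordering must be discarded by normalizing with respect to row $1$. Everything else is routine bookkeeping.
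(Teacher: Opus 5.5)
Your proposal is correct and follows essentially the same route as the paper. In (i) you check that each warp line takes one point per row and that the permutations make the lines partition the grid; in (ii) you recover the permutations by normalizing each warp line against its point in row $1$, which is exactly how the paper defines its $\sigma_i$. You also supply details the paper only asserts: that each $\tau_r$ is a bijection, and that distinct tuples $(\sigma_1,\ldots,\sigma_{m-1})$ give distinct families, which is what justifies repetition-freeness and the count $(n!)^{m-1}$.
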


\begin{proof} 
(i): The single warp family partitions the grid.  To meet each of the weft lines in exactly one point, 
the warp lines need only pick exactly one point from each row of the grid.  Each of the points in the first row is in a distinct line.  
The construction uses the permutation $\sigma_i$ to specify which point of row $i+1$ is in the same warp line as each point of row $i$. 
The fact that the $\sigma_i$ are permutations ensures that  these lines partition the grid.   
There are $(n!)^{m-1}$ ways to construct such a family, giving a repetition-free  $(m,n,1, (n!)^{m-1})$-reticulation.

(ii): The construction of Definition \ref{def:constructarray} gives each point indices 
$(i,j)\in[m]\times [n]$ based on a choice of one weft and warp families and the choice of ordering of these two families by Lemma \ref{lem:gridembed}. 
Given the embedding of a warp family onto the grid, define $\sigma_i(j)=j'$ whenever the point $(i,j')$ is in the same line as the point $(1, j)$. 
The construction of (i) returns the embedding. Thus, (ii) holds. 
\end{proof} 

 We have been drawing the lines in our figures by using permutations differently.
 For each choice of permutations $\sigma_1$, $\ldots$, $\sigma_{m-1}\in  \sym{n}$,
            form a family of warp lines on $[m]\times [n]$ by 
             $\{ \{(1,j),  (2,\sigma_1(j)), (m,\sigma_2\circ\sigma_1(j), \ldots, 
                                           \sigma_{m-1}\circ\cdots\circ \sigma_1(j))\}  : j\in[n]\}$.
Given the embedding of a warp family onto the grid, define $\sigma_i(j)=j'$ whenever $(i,j)$ and $(i+1, j')$ are in the same line.

\begin{example}
Lemma \ref{lem:mn1*} gives a reticulation with parameters
$(3,3,1,(n!)^{m-1}=36)$: 

\noindent
$\left\{\!
 \begin{tikzpicture}[scale=.30,baseline=3.3ex]
\foreach \y in {1,2, 3}
{ \draw[thick, red] (1,\y)--(3,\y); };
\gridpoints{3}{3} 
\end{tikzpicture}\!\right\}$, 
$\!\!\left\{\!
\begin{minipage}{.85\textwidth}
\begin{tikzpicture}[scale=.30,baseline=2ex]
\draw[ blue] (1,1)--(1,2)--(1, 3); 
\draw[ blue] (2,1)--(2,2)--(2, 3); 
\draw[ blue] (3,1)--(3,2)--(3, 3); 
\gridpoints{3}{3}
\end{tikzpicture},
\begin{tikzpicture}[scale=.30,baseline=2ex]
\draw[ blue] (1,1)--(1,2)--(1, 3); 
\draw[ blue] (3,1)--(2,2)--(2, 3); 
\draw[ blue] (2,1)--(3,2)--(3, 3); 
\gridpoints{3}{3}
\end{tikzpicture},
\begin{tikzpicture}[scale=.30,baseline=2ex]
\draw[ blue] (2,1)--(1,2)--(1, 3); 
\draw[ blue] (1,1)--(2,2)--(2, 3); 
\draw[ blue] (3,1)--(3,2)--(3, 3); 
\gridpoints{3}{3}
\end{tikzpicture},
\begin{tikzpicture}[scale=.30,baseline=2ex]
\draw[ blue] (3,1)--(1,2)--(1, 3); 
\draw[ blue] (2,1)--(2,2)--(2, 3); 
\draw[ blue] (1,1)--(3,2)--(3, 3); 
\gridpoints{3}{3}
\end{tikzpicture},
\begin{tikzpicture}[scale=.30,baseline=2ex]
\draw[ blue] (2,1)--(1,2)--(1, 3); 
\draw[ blue] (3,1)--(2,2)--(2, 3); 
\draw[ blue] (1,1)--(3,2)--(3, 3); 
\gridpoints{3}{3}
\end{tikzpicture},
\begin{tikzpicture}[scale=.30,baseline=2ex]
\draw[ blue] (3,1)--(1,2)--(1, 3); 
\draw[ blue] (1,1)--(2,2)--(2, 3); 
\draw[ blue] (2,1)--(3,2)--(3, 3); 
\gridpoints{3}{3}
\end{tikzpicture},
\begin{tikzpicture}[scale=.30,baseline=2ex]
\draw[ blue] (1,1)--(1,2)--(1, 3); 
\draw[ blue] (2,1)--(3,2)--(2, 3); 
\draw[ blue] (3,1)--(2,2)--(3, 3); 
\gridpoints{3}{3}
\end{tikzpicture},
\begin{tikzpicture}[scale=.30,baseline=2ex]
\draw[ blue] (1,1)--(1,2)--(1, 3); 
\draw[ blue] (3,1)--(3,2)--(2, 3); 
\draw[ blue] (2,1)--(2,2)--(3, 3); 
\gridpoints{3}{3}
\end{tikzpicture},
\begin{tikzpicture}[scale=.30,baseline=2ex]
\draw[ blue] (2,1)--(1,2)--(1, 3); 
\draw[ blue] (1,1)--(3,2)--(2, 3); 
\draw[ blue] (3,1)--(2,2)--(3, 3); 
\gridpoints{3}{3}
\end{tikzpicture},
\begin{tikzpicture}[scale=.30,baseline=2ex]
\draw[ blue] (3,1)--(1,2)--(1, 3); 
\draw[ blue] (2,1)--(3,2)--(2, 3); 
\draw[ blue] (1,1)--(2,2)--(3, 3); 
\gridpoints{3}{3}
\end{tikzpicture},
\begin{tikzpicture}[scale=.30,baseline=2ex]
\draw[ blue] (2,1)--(1,2)--(1, 3); 
\draw[ blue] (3,1)--(3,2)--(2, 3); 
\draw[ blue] (1,1)--(2,2)--(3, 3); 
\gridpoints{3}{3}
\end{tikzpicture},
\begin{tikzpicture}[scale=.30,baseline=2ex]
\draw[ blue] (3,1)--(1,2)--(1, 3); 
\draw[ blue] (1,1)--(3,2)--(2, 3); 
\draw[ blue] (2,1)--(2,2)--(3, 3); 
\gridpoints{3}{3}
\end{tikzpicture},

\smallskip
\begin{tikzpicture}[scale=.30,baseline=2ex]
\draw[ blue] (1,1)--(2,2)--(1, 3); 
\draw[ blue] (2,1)--(1,2)--(2, 3); 
\draw[ blue] (3,1)--(3,2)--(3, 3); 
\gridpoints{3}{3}
\end{tikzpicture},
\begin{tikzpicture}[scale=.30,baseline=2ex]
\draw[ blue] (1,1)--(2,2)--(1, 3); 
\draw[ blue] (3,1)--(1,2)--(2, 3); 
\draw[ blue] (2,1)--(3,2)--(3, 3); 
\gridpoints{3}{3}
\end{tikzpicture},
\begin{tikzpicture}[scale=.30,baseline=2ex]
\draw[ blue] (2,1)--(2,2)--(1, 3); 
\draw[ blue] (1,1)--(1,2)--(2, 3); 
\draw[ blue] (3,1)--(3,2)--(3, 3); 
\gridpoints{3}{3}
\end{tikzpicture},
\begin{tikzpicture}[scale=.30,baseline=2ex]
\draw[ blue] (3,1)--(2,2)--(1, 3); 
\draw[ blue] (2,1)--(1,2)--(2, 3); 
\draw[ blue] (1,1)--(3,2)--(3, 3); 
\gridpoints{3}{3}
\end{tikzpicture},
\begin{tikzpicture}[scale=.30,baseline=2ex]
\draw[ blue] (2,1)--(2,2)--(1, 3); 
\draw[ blue] (3,1)--(1,2)--(2, 3); 
\draw[ blue] (1,1)--(3,2)--(3, 3); 
\gridpoints{3}{3}
\end{tikzpicture},
\begin{tikzpicture}[scale=.30,baseline=2ex]
\draw[ blue] (3,1)--(2,2)--(1, 3); 
\draw[ blue] (1,1)--(1,2)--(2, 3); 
\draw[ blue] (2,1)--(3,2)--(3, 3); 
\gridpoints{3}{3} 
\end{tikzpicture},
\begin{tikzpicture}[scale=.30,baseline=2ex]
\draw[ blue] (1,1)--(3,2)--(1, 3); 
\draw[ blue] (2,1)--(2,2)--(2, 3); 
\draw[ blue] (3,1)--(1,2)--(3, 3); 
\gridpoints{3}{3}
\end{tikzpicture},
\begin{tikzpicture}[scale=.30,baseline=2ex]
\draw[ blue] (1,1)--(3,2)--(1, 3); 
\draw[ blue] (3,1)--(2,2)--(2, 3); 
\draw[ blue] (2,1)--(1,2)--(3, 3); 
\gridpoints{3}{3}
\end{tikzpicture},
\begin{tikzpicture}[scale=.30,baseline=2ex]
\draw[ blue] (2,1)--(3,2)--(1, 3); 
\draw[ blue] (1,1)--(2,2)--(2, 3); 
\draw[ blue] (3,1)--(1,2)--(3, 3); 
\gridpoints{3}{3}
\end{tikzpicture},
\begin{tikzpicture}[scale=.30,baseline=2ex]
\draw[ blue] (3,1)--(3,2)--(1, 3); 
\draw[ blue] (2,1)--(2,2)--(2, 3); 
\draw[ blue] (1,1)--(1,2)--(3, 3); 
\gridpoints{3}{3}
\end{tikzpicture},
\begin{tikzpicture}[scale=.30,baseline=2ex]
\draw[ blue] (2,1)--(3,2)--(1, 3); 
\draw[ blue] (3,1)--(2,2)--(2, 3); 
\draw[ blue] (1,1)--(1,2)--(3, 3); 
\gridpoints{3}{3}
\end{tikzpicture},
\begin{tikzpicture}[scale=.30,baseline=2ex]
\draw[ blue] (3,1)--(3,2)--(1, 3); 
\draw[ blue] (1,1)--(2,2)--(2, 3); 
\draw[ blue] (2,1)--(1,2)--(3, 3); 
\gridpoints{3}{3}
\end{tikzpicture},

\smallskip
\begin{tikzpicture}[scale=.30,baseline=2ex]
\draw[ blue] (1,1)--(2,2)--(1, 3); 
\draw[ blue] (2,1)--(3,2)--(2, 3); 
\draw[ blue] (3,1)--(1,2)--(3, 3); 
\gridpoints{3}{3}
\end{tikzpicture},
\begin{tikzpicture}[scale=.30,baseline=2ex]
\draw[ blue] (1,1)--(2,2)--(1, 3); 
\draw[ blue] (3,1)--(3,2)--(2, 3); 
\draw[ blue] (2,1)--(1,2)--(3, 3); 
\gridpoints{3}{3}
\end{tikzpicture},
\begin{tikzpicture}[scale=.30,baseline=2ex]
\draw[ blue] (2,1)--(2,2)--(1, 3); 
\draw[ blue] (1,1)--(3,2)--(2, 3); 
\draw[ blue] (3,1)--(1,2)--(3, 3); 
\gridpoints{3}{3}
\end{tikzpicture},
\begin{tikzpicture}[scale=.30,baseline=2ex]
\draw[ blue] (3,1)--(2,2)--(1, 3); 
\draw[ blue] (2,1)--(3,2)--(2, 3); 
\draw[ blue] (1,1)--(1,2)--(3, 3); 
\gridpoints{3}{3}
\end{tikzpicture},
\begin{tikzpicture}[scale=.30,baseline=2ex]
\draw[ blue] (2,1)--(2,2)--(1, 3); 
\draw[ blue] (3,1)--(3,2)--(2, 3); 
\draw[ blue] (1,1)--(1,2)--(3, 3); 
\gridpoints{3}{3}
\end{tikzpicture},
\begin{tikzpicture}[scale=.30,baseline=2ex]
\draw[ blue] (3,1)--(2,2)--(1, 3); 
\draw[ blue] (1,1)--(3,2)--(2, 3); 
\draw[ blue] (2,1)--(1,2)--(3, 3); 
\gridpoints{3}{3}
\end{tikzpicture},
\begin{tikzpicture}[scale=.30,baseline=2ex]
\draw[ blue] (1,1)--(3,2)--(1, 3); 
\draw[ blue] (2,1)--(1,2)--(2, 3); 
\draw[ blue] (3,1)--(2,2)--(3, 3); 
\gridpoints{3}{3}
\end{tikzpicture},
\begin{tikzpicture}[scale=.30,baseline=2ex]
\draw[ blue] (1,1)--(3,2)--(1, 3); 
\draw[ blue] (3,1)--(1,2)--(2, 3); 
\draw[ blue] (2,1)--(2,2)--(3, 3); 
\gridpoints{3}{3}
\end{tikzpicture},
\begin{tikzpicture}[scale=.30,baseline=2ex]
\draw[ blue] (2,1)--(3,2)--(1, 3); 
\draw[ blue] (1,1)--(1,2)--(2, 3); 
\draw[ blue] (3,1)--(2,2)--(3, 3); 
\gridpoints{3}{3}
\end{tikzpicture},
\begin{tikzpicture}[scale=.30,baseline=2ex]
\draw[ blue] (3,1)--(3,2)--(1, 3); 
\draw[ blue] (2,1)--(1,2)--(2, 3); 
\draw[ blue] (1,1)--(2,2)--(3, 3); 
\gridpoints{3}{3}
\end{tikzpicture},
\begin{tikzpicture}[scale=.30,baseline=2ex]
\draw[ blue] (2,1)--(3,2)--(1, 3); 
\draw[ blue] (3,1)--(1,2)--(2, 3); 
\draw[ blue] (1,1)--(2,2)--(3, 3); 
\gridpoints{3}{3}
\end{tikzpicture},
\begin{tikzpicture}[scale=.30,baseline=2ex]
\draw[ blue] (3,1)--(3,2)--(1, 3); 
\draw[ blue] (1,1)--(1,2)--(2, 3); 
\draw[ blue] (2,1)--(2,2)--(3, 3); 
\gridpoints{3}{3}
\end{tikzpicture}
\end{minipage}\!
 \right\}$.
\end{example}

Reticulations with  parameters $(m\geq 3,n\geq 3,k\geq 2,\ell\geq 2)$ are  challenging
to study.  The maximality of the reticulations constructed in this section sidesteps issues around parastrophic and isotopic ordered reticulations. 

\section{Prolongation}
\label{sec:prolongation}

We adapt a recursive construction from Latin squares \cite[Section 4.3]{MR1096296},  \cite[Section 1.5]{MR3495977} to increase one of the dimensions of a reticulation/cooperative system by one.  
We begin with an analog for the notion of a transversal of a Latin square and the prolongation construction (see \cite[Section 1.5]{MR3495977}). 

\begin{definition}
Let $\mathcal{R}=(\mathcal{P}, \mathcal{F}_{\mathrm{weft}},  \mathcal{F}_{\mathrm{warp}})$ be a reticulation. 
By a \emph{weft semitransversal}, we mean a subset ${T}$ of points such that every weft line is incident to exactly one element of ${T}$.
By a \emph{warp semitransversal}, we mean a subset ${S}$ of points such that every warp line is incident to exactly one element of ${S}$.
\end{definition}

The corresponding notions for cooperative systems offer a more concrete approach.

\begin{definition}
Let  $\mathcal{C}=(C^1, \ldots, C^k; R^1, \ldots, R^\ell)$ be a cooperative system with parameters $(m,n,k,\ell)$.
\begin{enumerate}
\item A \emph{row semitransversal} of $\mathcal{C}$ is a subset ${T}$ of $[m]\times[n]$ such that 
every element of $[m]$ 
      occurs exactly once as a first component of an element of ${T}$ and 
      occurs exactly once among the positions of each $C^u$ $(u\in[k])$ indexed by the elements of ${T}$.
\item A \emph{column semitransversal} of $\mathcal{C}$ is a subset ${S}$ of $[m]\times[n]$ such that 
every element of $[n]$ 
      occurs exactly once as a second component of an element of ${S}$ and 
      occurs exactly once among the positions of each $R^v$ $(v\in[\ell])$ indexed by the elements of ${S}$.      
      
\end{enumerate}     
\end{definition}

For a normalized cooperative system, the two conditions for a row semitransversal are redundant.
The notions of row and column semitransversals are weaker than the usual notion of  transversal for Latin squares \cite{MR1096296} or for row-Latin rectangles \cite{MR1652837, MR387083}, 
as they do not require distinct columns and rows, respectively.

\begin{lemma}
\label{lem:transversalsofCoop}
Let  $\mathcal{C}=(C^1, \ldots, C^k; R^1, \ldots, R^\ell)$ be a cooperative system with parameters $(m,n,k,\ell)$.
\begin{enumerate}
\item For all $v\in[k]$, the set $\{ \{(i,j) :  (i,j)\in[m]\times[n], R^v(i,j)=q  \}_{q\in[n]} \}$ (the set of warp lines associated with $R^v$) is a partition of $[m]\times[n]$ into disjoint row semitransversals.

\item  For all $u\in[\ell]$, the set $\{ \{(i,j) :  (i,j)\in[m]\times[n], C^u(i,j)=r  \}_{r\in[m]} \}$  (the set of weft lines associated with $C^u$)  is a partition of $[m]\times[n]$ into disjoint column semitransversals.   
\end{enumerate}
\end{lemma}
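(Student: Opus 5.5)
The plan is to verify each part directly from the three defining properties of a cooperative pair (CP-1)--(CP-3), treating (i) and (ii) symmetrically. (The indices in the statement are evidently swapped: $R^v$ ranges over $v\in[\ell]$ and $C^u$ over $u\in[k]$. I read them that way.) I will prove (i) in detail; (ii) then follows by passing to the transpose, in the spirit of Lemma \ref{lem:transposereticulation} and the parastrophism that swaps weft and warp. Under that swap, row-Latin and column-Latin interchange, and row semitransversals become column semitransversals.

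For (i), fix $v\in[\ell]$ and set $L_q=\{(i,j): R^v(i,j)=q\}$ for $q\in[n]$. The sets $L_q$ are the fibres of the function $R^v$ on $[m]\times[n]$, so they are pairwise disjoint and cover $[m]\times[n]$. Each is nonempty because $R^v$ is row-Latin, so this is a partition. It remains to show that each $L_q$ is a row semitransversal, which requires two checks.

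First, each $i\in[m]$ occurs exactly once as a first component in $L_q$. This holds because row $i$ of $R^v$ is a permutation of $[n]$, by (CP-2), so it contains $q$ exactly once.

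Second, for each $u\in[k]$, every $r\in[m]$ occurs exactly once among the values $C^u(i,j)$ with $(i,j)\in L_q$. This is orthogonality (CP-3) of $C^u$ and $R^v$: the pair $(r,q)$ occurs in exactly one position $(i,j)$, and that position is precisely the unique element of $L_q$ where $C^u$ takes the value $r$.

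Part (ii) runs the same way with the roles exchanged. The fibres of $C^u$ partition the grid. Column-Latinness (CP-1) gives exactly one point of each fibre in each column. Orthogonality with each $R^v$ gives that each value of $R^v$ appears exactly once on a fibre.

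There is no real obstacle. The only care needed is to match the "first component" condition in the definition of a row semitransversal with the correct Latin property (row-Latin for the $R^v$), and to note the index typo.
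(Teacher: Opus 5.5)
Your proof is correct and takes essentially the same route as the paper: you check the two row-semitransversal conditions directly on each fibre $\{(i,j): R^v(i,j)=q\}$ from the cooperative-pair axioms, and handle (ii) symmetrically. Your version is more precise than the paper's terse argument, which loosely cites column-Latinness of $C^u$; you correctly trace the one-point-per-row condition to row-Latinness (CP-2) of $R^v$ and the distinct-$C^u$-values condition to orthogonality (CP-3), and you are right that the index ranges $v\in[k]$ and $u\in[\ell]$ in the statement are swapped.
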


\begin{proof} 
Since $\mathcal{C}$ is a cooperative system, no two pairs have the same first component.  
Since each $C^u$ is column-Latin,  every value in $[m]$ occurs exactly one in column of each $C^u$. Thus, (i) holds. A similar argument gives (ii).
\end{proof}

We are ready to describe a recursive construction of cooperative systems.

\begin{theorem}
Let  $\mathcal{C}=(C^1, \ldots, C^k; R^v, \ldots, R^\ell)$ be a cooperative system with parameters $(m,n,k,\ell)$.
\begin{enumerate}
\item Let $T$ be a row semitransversal of $\mathcal{C}$.
Define $m\times (n+1)$ matrices $\cpro{T}{C}^u$ $(u\in[k])$ and $\cpro{T}{R}^v$ $(v\in[\ell])$ with $(i,j)$-entries
\begin{align}
\cpro{T}{C}^u({i,j}) &= \begin{cases}
                               C^u({i,j}) & \hbox{if $(i,j)\in[m]\times[n]$,}\\
                               C^u({i,\ell})& \hbox{if $i\in[m]$, $j=n+1$, and $(i,\ell)\in T$,}
                            \end{cases} \nonumber \\
\cpro{T}{R}^v({i,j}) &= \begin{cases}
                               R^v({i,j}) & \hbox{if $(i,j)\in([m]\times[n])\setminus T$,}\\
                               n+1 & \hbox{if $(i,j)\in T$,}\\
                               R^v({i,\ell}) & \hbox{if $i\in[m]$, $j=n+1$, and $(i,\ell)\in T$.}
                            \end{cases} \nonumber
\end{align}                                    
Then $\cpro{T}{\mathcal{C}}:=(\cpro{T}{C}^1, \ldots, \cpro{T}{C}^k; \cpro{T}{R}^1, \ldots, \cpro{T}{R}^\ell)$
is a cooperative system with parameters $(m,n+1,k,\ell)$.  We refer to $\cpro{T}{\mathcal{C}}$ as the \emph{column prolongation of $\mathcal{C}$ relative to the row semitransversal $T$}.

\item
Let ${S}$ be column semitransversal of $\mathcal{C}$.  Then ${S}^\dag$ is a row semitransversal of $\mathcal{C}^\dag$.   
Furthermore, $\rpro{{S}}{\mathcal{C}} := (\cpro{{S}^\dag}{\mathcal{C}}^\dag)^\dag$ is a cooperative system with parameters $(m+1,n,k,\ell)$.  
We refer to $\rpro{\mathcal{S}}{\mathcal{C}}$ as the \emph{row prolongation of $\mathcal{C}$ relative to the row semitransversal ${S}$}.
\end{enumerate}
\end{theorem}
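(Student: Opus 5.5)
The plan is to verify (CP-1)--(CP-3) directly for every pair $(\cpro{T}{C}^u,\cpro{T}{R}^v)$, and then to get (ii) from (i) by transposition. First I will unpack the row semitransversal condition. Since every $i\in[m]$ occurs exactly once as a first component of an element of $T$, we can write $T=\{(i,t_i): i\in[m]\}$; the index called $\ell$ in the displayed formulas is just $t_i$. The second condition then says that for each $u\in[k]$ the map $i\mapsto C^u(i,t_i)$ is a permutation of $[m]$. So the new column $n+1$ of $\cpro{T}{C}^u$ is $(C^u(1,t_1),\ldots,C^u(m,t_m))^\dag$, and at the position $(i,n+1)$ the matrix $\cpro{T}{R}^v$ holds the value $R^v(i,t_i)$ displaced from $(i,t_i)$.

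(CP-1): the first $n$ columns of $\cpro{T}{C}^u$ are those of $C^u$, hence column-Latin. Column $n+1$ is column-Latin exactly by the semitransversal permutation property just noted.

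(CP-2): in row $i$ of $\cpro{T}{R}^v$, the entries of $R^v$ are kept except that $R^v(i,t_i)$ is moved to column $n+1$ and replaced by $n+1$. Row $i$ of $R^v$ is a permutation of $[n]$, so row $i$ of $\cpro{T}{R}^v$ is a permutation of $[n+1]$.

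(CP-3) is the main step. There are exactly $m(n+1)$ positions, so it suffices to show that every pair in $[m]\times[n+1]$ is attained. The pairs with second coordinate $n+1$ occur exactly at the positions of $T$, where they are $(C^u(i,t_i),n+1)$; these exhaust $\{(q,n+1): q\in[m]\}$ by the permutation property. The remaining positions are $([m]\times[n])\setminus T$ together with column $n+1$. On these positions the pairs are the original pairs $(C^u(i,j),R^v(i,j))$ for $(i,j)\notin T$, together with the pairs $(C^u(i,t_i),R^v(i,t_i))$ relocated to $(i,n+1)$. Thus they are exactly the original pairs of $(C^u,R^v)$, which cover $[m]\times[n]$ by orthogonality. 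This establishes (i).

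For (ii), I first note that transposition swaps the roles of the two types. If $(C^1,\ldots,C^k;R^1,\ldots,R^\ell)$ is a cooperative system with parameters $(m,n,k,\ell)$, then $\mathcal{C}^\dag=((R^1)^\dag,\ldots,(R^\ell)^\dag;(C^1)^\dag,\ldots,(C^k)^\dag)$ is a cooperative system with parameters $(n,m,\ell,k)$. Indeed, transposing a row-Latin matrix gives a column-Latin one and conversely, and orthogonality is preserved because the multiset of entry pairs is unchanged. This is the matrix counterpart of Lemma \ref{lem:transposereticulation}. Next, if $S$ is a column semitransversal of $\mathcal{C}$, then $S^\dag=\{(j,i):(i,j)\in S\}$ has each $j\in[n]$ exactly once as a first component. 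Moreover, the values of the first-type matrices $(R^v)^\dag$ on $S^\dag$ are those of $R^v$ on $S$, so each value occurs once. Hence $S^\dag$ is a row semitransversal of $\mathcal{C}^\dag$. Applying (i) yields a cooperative system $\cpro{S^\dag}{\mathcal{C}^\dag}$ with parameters $(n,m+1,\ell,k)$, and transposing again gives parameters $(m+1,n,k,\ell)$. The only care needed throughout is the bookkeeping of which matrices are of which type after transposition; (CP-3) in part (i) is the one substantive counting step.
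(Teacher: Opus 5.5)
Your proposal is correct and follows essentially the same argument as the paper for (i). The appended column of each $C^u$ is a permutation of $[m]$ by the semitransversal property, bumping $R^v(i,t_i)$ to column $n+1$ keeps rows Latin, the positions of $T$ supply all pairs $(q,n+1)$, and the relocated pairs in the new column restore the original pairs of $(C^u,R^v)$. Your check that $\mathcal{C}^\dag$ is a cooperative system and that $S^\dag$ is a row semitransversal of it supplies the argument for (ii), which the paper's proof leaves implicit.
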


\begin{proof} 
By construction, each $\cpro{T}{C}^u$ is column-Latin as the appended column repeats all values in $[m]$ as its entries.
By construction, each $\cpro{T}{R}^v$ is row-Latin as a new entry $n+1$ is added, the previous entry is bumped to the end.
In the locations indexed by $T$, each value of $[m]$ in each $\cpro{T}{C}^u$ is paired with the entry  $n+1$ in  $\cpro{T}{R}^v$.  
The pairs from $[m]\times [n]$ that appeared in the position of $C^u$ and $R^v$ indexed by $T$ appear in the added column.
\end{proof}

The operator \rotatebox{90}{$\ominus$} reminds us that we are placing a new column next to the reticulation, and the operator $\ominus$ reminds us that we are stacking a new row below the  reticulation. 
A row semitransversal of $\mathcal{C}$ is a row semitransversal of $\cpro{T}{\mathcal{C}}$. 
Up to the ordering of the warp lines, repeating the column prolongation construction relative to the same row semitransversal is a splicing horizontal segments to the weft lines and adding new vertical lines to the warp families. 

\begin{example}
We twice prolongate a small cooperative system relative to the row semitransversal marked in boxes.
\[\addtolength{\arraycolsep}{-0.35em}
 {C}^1 =\left[\begin{array}{cccc}
1&1&\fbox{1}&1\\[-1pt]
2&2&2&\fbox{2}\\[-1pt]
\fbox{3}&3&3&3
\end{array}\right], \ 
\cpro{T}{C}^1 =\left[\begin{array}{ccccc}
1&1&\fbox{1}&1 &1\\[-1pt]
2&2&2&\fbox{2}&2\\[-1pt]
\fbox{3}&3&3&3 &3
\end{array}\right],\,
\cpro{T}{\cpro{T}{C}}^1 =\left[\begin{array}{cccccc}
1&1&\fbox{1}&1 &1&1\\[-1pt]
2&2&2&\fbox{2}&2&2\\[-1pt]
\fbox{3}&3&3&3 &3 &3
\end{array}\right], \]
\[ \addtolength{\arraycolsep}{-0.35em}
{C}^2 =\left[\begin{array}{ccccc}
2&3&\fbox{2}&3\\[-1pt]
1&2&3&\fbox{1}\\[-1pt]
\fbox{3}&1&1&2
\end{array}\right],\ 
\cpro{T}{C}^2 =\left[\begin{array}{ccccc}
2&3&\fbox{2}&3&2\\[-1pt]
1&2&3&\fbox{1}&1\\[-1pt]
\fbox{3}&1&1&2&3
\end{array}\right],\ 
\cpro{T}{\cpro{T}{C}}^2 =\left[\begin{array}{cccccc}
2&3&\fbox{2}&3&2&2\\[-1pt]
1&2&3&\fbox{1}&1&1\\[-1pt]
\fbox{3}&1&1&2&3 &3
\end{array}\right],\ 
\]
\[\addtolength{\arraycolsep}{-0.35em}
 {R}^1 =\left[\begin{array}{ccccc}
1&2&\fbox{3}&4\\[-1pt]
1&2&3&\fbox{4}\\[-1pt]
\fbox{1}&2&3&4
\end{array}\right],\ 
\cpro{T}{R}^1 =\left[\begin{array}{ccccc}
1&2&\fbox{5}&4&3\\[-1pt]
1&2&3&\fbox{5}&4\\[-1pt]
\fbox{5}&2&3&4&1
\end{array}\right],\ 
\cpro{T}{\cpro{T}{R}}^1 =\left[\begin{array}{cccccc}
1&2&\fbox{6}&4&3&5\\[-1pt]
1&2&3&\fbox{6}&4&5\\[-1pt]
\fbox{6}&2&3&4&1 &5
\end{array}\right],
\]
\[\addtolength{\arraycolsep}{-0.35em}
 {R}^2 =\left[\begin{array}{cccc}
1&4&\fbox{3}&2\\[-1pt]
4&2&1&\fbox{3}\\[-1pt]
\fbox{3}&1&2&4\\
\end{array}\right],\,
\cpro{T}{R}^2 =\left[\begin{array}{ccccc}
1&4&\fbox{5}&2&3\\[-1pt]
4&2&1&\fbox{5}&3\\[-1pt]
\fbox{5}&1&2&4&3
\end{array}\right],\ 
\cpro{T}{\cpro{T}{R}}^2 =\left[\begin{array}{cccccc}
1&4&\fbox{6}&2&3&5\\[-1pt]
4&2&1&\fbox{6}&3&5\\[-1pt]
\fbox{6}&1&2&4&3&5
\end{array}\right].\] 
\end{example}

\begin{lemma} 
Let $\mathcal{C}$ be a cooperative system, and let $T$ be a row semitransversal of $\mathcal{C}$.
If $\mathcal{C}$ is repetition-free, then so is $\cpro{T}{\mathcal{C}}$.
\end{lemma}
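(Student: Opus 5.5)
Repetition-free for a cooperative system means that the matrices $C^1,\ldots,C^k$ are pairwise distinct and the matrices $R^1,\ldots,R^\ell$ are pairwise distinct; these correspond to distinct weft and warp families. So I will show that the column prolongation is injective on each list: distinct $C$'s give distinct $\cpro{T}{C}$'s, and distinct $R$'s give distinct $\cpro{T}{R}$'s.

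\textbf{Weft matrices.} By the defining formula, $\cpro{T}{C}^u$ agrees with $C^u$ on $[m]\times[n]$. Its restriction to the first $n$ columns is therefore exactly $C^u$. If $C^u\neq C^{u'}$, they differ in some position $(i,j)$ with $j\le n$. Then $\cpro{T}{C}^u$ and $\cpro{T}{C}^{u'}$ differ in that same position.

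\textbf{Warp matrices.} This is the only step needing care, since the entries in positions of $T$ are overwritten by $n+1$. Suppose $R^v\neq R^{v'}$, and pick $(i,j)\in[m]\times[n]$ with $R^v(i,j)\neq R^{v'}(i,j)$. There are two cases.
\begin{enumerate}
\item If $(i,j)\notin T$, the defining formula leaves this entry unchanged in both prolongations. So $\cpro{T}{R}^v(i,j)=R^v(i,j)\neq R^{v'}(i,j)=\cpro{T}{R}^{v'}(i,j)$.
\item If $(i,j)\in T$, both prolongations have the entry $n+1$ at $(i,j)$, so the difference has moved. The displaced entry reappears in the new column: $\cpro{T}{R}^v(i,n+1)=R^v(i,j)$ and $\cpro{T}{R}^{v'}(i,n+1)=R^{v'}(i,j)$. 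These differ, so the two prolongations differ in position $(i,n+1)$.
\end{enumerate}
One point must be checked here: the appended column entry in row $i$ is well defined. That is, $(i,j)$ must be the unique element of $T$ in row $i$. This holds because a row semitransversal has each element of $[m]$ occurring exactly once as a first component.

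Equivalently, the map $R\mapsto\cpro{T}{R}$ has an explicit left inverse. For each $(i,j)\in T$, read off $R(i,j)$ from column $n+1$ of row $i$; all other entries of $R$ are read off directly from $\cpro{T}{R}$. The map is therefore injective.

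Combining the two parts, the families of $\cpro{T}{\mathcal{C}}$ stay pairwise distinct within each type, so $\cpro{T}{\mathcal{C}}$ is repetition-free. I expect no real obstacle beyond the bookkeeping in the case $(i,j)\in T$. That case relies on $T$ meeting each row exactly once, which is part of the definition of a row semitransversal.
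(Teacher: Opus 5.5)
Your proof is correct and follows the paper's argument: the differences between the $C^u$ survive in the untouched first $n$ columns, and a difference between $R^v$ and $R^{v'}$ is either preserved at a position off $T$ or copied into the appended column $n+1$. Your check that each row meets $T$ exactly once, so that the appended entry is well defined, and your left-inverse formulation fill in bookkeeping that the paper's two-line sketch leaves implicit.
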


\begin{proof} 
The differences between the column-Latin matrices (weft families) are not altered by adding a new column.
The between the row-Latin matrices (warp families) are either preserved or copied into the added column.
\end{proof} 

The prolongation construction leads to special circumstances under which a cooperative pair can be abbreviated.
Let $\mathcal{C}$ be a cooperative pair with parameters $(m,n,k,\ell)$.  
Let $T$ be a row semitransversal of $\mathcal{C}$.
Suppose that there is a column $j\in[n]$ disjoint from $T$ such that  the following hold.
\begin{enumerate}
\item For all $u\in[k]$, 
        $C^u(i,j)$ is equal to the entry of $C^u$ in the position in row $i$ of  $T$ for all $i\in[m]$.
\item For all $v\in[\ell]$, every entry in column $j$ of $R^v$ is $m$. 
\end{enumerate}
Then the result of replacing $R^v(q,r)$ with the entry in $R^v(q,j)$ for $(q,r)\in T$ $(v\in[\ell])$ and
deleting column $j$ from all $C^u$ and $R^v$  yields a cooperative system with parameter $(m,n-1,k,\ell)$.
Note that in (ii), the constant value need not be $m$ (apply an isotopism to change the value).

\section{Splicing}
\label{sec:splice}

We present a construction of a larger reticulation from smaller reticulations.

\begin{theorem}
\label{thm:splice}
Let $\hat{\mathcal{P}}$ and $\check{\mathcal{P}}$ be disjoint sets.
Suppose 
\begin{eqnarray*}
\hat{\mathcal{R}}&=&(\hat{\mathcal{P}},
  \{\hat{\mathcal{E}}^{\hat{u}}=\{ \hat{E}^{\hat{u}}_{\hat{\imath}}\}_{\hat{\imath}\in[\hat{m}]}\}_{\hat{u}\in[\hat{k}]}, 
  \{\hat{\mathcal{A}}^{\hat{v}}=\{ \hat{A}^{\hat{v}}_{\hat{\jmath}}\}_{\hat{\jmath}\in[\hat{n}]} \}_{\hat{v}\in[\hat{\ell}]}) \ \hbox{and}\\
\check{\mathcal{R}}&=&(\check{\mathcal{P}},
  \{\check{\mathcal{E}}^{\check{u}}=\{ \check{E}^{\check{u}}_{\check{\imath}}\}_{\check{\imath}\in[\check{m}]} \}_{\check{u}\in[\check{k}]}, 
  \{\check{\mathcal{A}}^{\check{v}}=\{ \check{A}^{\check{v}}_{\check{\jmath}}\}_{\check{\jmath}\in[\check{n}]}\}_{\check{v}\in[\check{\ell}]})  
\end{eqnarray*}
are ordered reticulations with parameters 
$(\hat{m}, \hat{n}, \hat{k}, \hat{\ell})$ and $(\check{m}, \check{n}, \check{k}, \check{\ell})$, respectively.    
\begin{enumerate}
\item  \label{thm:weftsplice}
Suppose $\hat{m}=\check{m}$, and let $\mathcal{P}= \hat{\mathcal{P}}\cup\check{\mathcal{P}}$.
\begin{enumerate}
\item For each $\hat{u}\in[\hat{k}]$,   $\check{u}\in[\check{k}]$, $\sigma\in S_{\hat{m}}$, and $i\in[\hat{m}]$, 
         let ${E}^{\hat{u},\check{u},\sigma}_{i} = E^{\hat{u}}_{i}\cup \check{E}^{\check{u}}_{\sigma(i)}$, and set 
      $\mathcal{E}^{\hat{u},\check{u},\sigma} = \{{E}^{\hat{u},\check{u},\sigma}_{i} \}_{i\in[\hat{m}]}$.
         Then the lines in $\mathcal{E}^{\hat{u},\check{u},\sigma}$ partition $\mathcal{P}$.
\item  For each $\hat{v}\in[\hat{\ell}]$, $\check{v}\in[\check{\ell}]$, 
          let  $\mathcal{A}^{\hat{v},\check{v}} =\hat{\mathcal{A}}^{\hat{v}}\cup \check{\mathcal{A}}^{\check{v}}$.
          Then the lines in  $\mathcal{A}^{\hat{v},\check{v}}$ partition $\mathcal{P}$. 
\item For any $\hat{u}\in[\hat{k}]$,   $\check{u}\in[\check{k}]$, $\sigma\in S_{\hat{m}}$ and 
         for any $\hat{v}\in[\hat{\ell}]$, $\check{v}\in[\check{\ell}]$,
         each line in $\mathcal{E}^{\hat{u},\check{u},\sigma}$ meets 
         each line in $\mathcal{A}^{\hat{v},\check{v}}$ in a unique point. 
         Specifically,  
           ${E}^{\hat{u},\check{u},\sigma}_{i}\cap \hat{A}^{\hat{v}}_{\hat{\jmath}} 
                     = \hat{E}^{\hat{u}}_{i} \cap  \hat{A}^{\hat{v}}_{\hat{\jmath}}$  and 
           ${E}^{\hat{u},\check{u},\sigma}_{i} \cap \check{A}^{\check{v}}_{\check{\jmath}}
                     = \check{E}^{\check{u}}_{\sigma(i)}\cap \check{A}^{\check{v}}_{\check{\jmath}}$. 
\item Let $\hat{\mathcal{R}}\boxbar\check{\mathcal{R}}=(
       \mathcal{P}, 
        \{ \mathcal{E}^{\hat{u},\check{u},\sigma}\}_{\hat{u}\in[\hat{k}], \check{u}\in[\check{k}], \sigma\in S_{\hat{m}}},
        \{\mathcal{A}^{\hat{v},\check{v}}\}_{\hat{v}\in[\hat{\ell}], \check{v}\in[\check{\ell}]})$.
      Then $\hat{\mathcal{R}}\boxbar\check{\mathcal{R}}$    is a reticulation with parameters 
             $(\hat{m}, \hat{n}+\check{n}, \hat{k}\cdot \check{k}\cdot \hat{m}!, \hat{\ell}\cdot \check{\ell})$.
       We call $\hat{\mathcal{R}}\boxbar\check{\mathcal{R}}$ the \emph{weft splice} of    $\hat{\mathcal{R}}$ and $\check{\mathcal{R}}$.

\item If $\hat{\mathcal{R}}$ and $\check{\mathcal{R}}$ are repetition-free, then so is 
$\hat{\mathcal{R}}\boxbar\check{\mathcal{R}}$.
\end{enumerate}

\item  \label{thm:warpsplice}
    Suppose $\hat{n}=\check{n}$.  Write $\hat{\mathcal{R}}\boxminus\check{\mathcal{R}}$ to denote
    $(\hat{\mathcal{R}}^\dag\boxbar\check{\mathcal{R}}^\dag)^\dag$.
    Then $\hat{\mathcal{R}}\boxminus\check{\mathcal{R}}$  is a reticulation with parameters $(\hat{m}+\check{m}, \hat{n}, \hat{k}\cdot \check{k}, \hat{\ell}\cdot \check{\ell}\cdot \hat{n}!)$. 
          we call $\hat{\mathcal{R}}\boxminus\check{\mathcal{R}}$ the \emph{warp splice} of    $\hat{\mathcal{R}}$ and $\check{\mathcal{R}}$.
\end{enumerate}
\end{theorem}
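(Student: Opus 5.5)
Since $\hat{\mathcal{P}}$ and $\check{\mathcal{P}}$ are disjoint, every incidence question about $\mathcal{P}=\hat{\mathcal{P}}\cup\check{\mathcal{P}}$ splits into one question on $\hat{\mathcal{P}}$ and one on $\check{\mathcal{P}}$. Each of these is then answered by the axioms (R-1) and (R-2) for $\hat{\mathcal{R}}$ or $\check{\mathcal{R}}$, together with Theorem \ref{thm:regular}. For (a), the families $\{\hat{E}^{\hat{u}}_i\}_{i}$ and $\{\check{E}^{\check{u}}_{\sigma(i)}\}_{i}$ partition $\hat{\mathcal{P}}$ and $\check{\mathcal{P}}$ respectively by (R-2), the latter because $\sigma$ is a bijection of $[\hat{m}]=[\check{m}]$. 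So the unions $E^{\hat{u},\check{u},\sigma}_i$ cover $\mathcal{P}$. Two distinct $i\ne i'$ give lines whose $\hat{\mathcal{P}}$-parts are disjoint and whose $\check{\mathcal{P}}$-parts are disjoint, hence the lines are disjoint. Part (b) is immediate: the union of a partition of $\hat{\mathcal{P}}$ and a partition of $\check{\mathcal{P}}$ is a partition of the disjoint union.

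For (c), take $E^{\hat{u},\check{u},\sigma}_i$ and a warp line of $\mathcal{A}^{\hat{v},\check{v}}$, which is either some $\hat{A}^{\hat{v}}_{\hat{\jmath}}\subseteq\hat{\mathcal{P}}$ or some $\check{A}^{\check{v}}_{\check{\jmath}}\subseteq\check{\mathcal{P}}$. In the first case, disjointness gives $E^{\hat{u},\check{u},\sigma}_i\cap\hat{A}^{\hat{v}}_{\hat{\jmath}}=\hat{E}^{\hat{u}}_i\cap\hat{A}^{\hat{v}}_{\hat{\jmath}}$, which is a single point by (R-1) in $\hat{\mathcal{R}}$. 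The second case is symmetric, giving $\check{E}^{\check{u}}_{\sigma(i)}\cap\check{A}^{\check{v}}_{\check{\jmath}}$. Then (d) follows: (a), (b), (c) verify (R-2) and (R-1) for $\hat{\mathcal{R}}\boxbar\check{\mathcal{R}}$, which is nonempty and finite. The parameters are read off via Theorem \ref{thm:regular}.
\begin{itemize}
\item Each weft line has $\hat{n}+\check{n}$ points, since by Theorem \ref{thm:regular} $\hat{E}$-lines have size $\hat{n}$ and $\check{E}$-lines have size $\check{n}$ in the paper's convention.
\item Each warp family has $\hat{n}+\check{n}$ lines.
\item There are $\hat{k}\check{k}\hat{m}!$ weft families, one for each indexing triple.
\item There are $\hat{\ell}\check{\ell}$ warp families.
\end{itemize}
One should reconcile the roles of $m$ and $n$ with the paper's convention in Theorem \ref{thm:regular}, since weft lines there are counted by $n$ per family. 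I would simply record sizes consistently with the stated tuple $(\hat{m},\hat{n}+\check{n},\ldots)$.

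Part (e) is the step I expect to need the most care, because two different triples could conceivably produce the same family. Suppose $\mathcal{E}^{\hat{u},\check{u},\sigma}=\mathcal{E}^{\hat{u}',\check{u}',\sigma'}$ as partitions. Intersecting every line with $\hat{\mathcal{P}}$ recovers $\hat{\mathcal{E}}^{\hat{u}}=\hat{\mathcal{E}}^{\hat{u}'}$, so $\hat{u}=\hat{u}'$ by repetition-freeness. Similarly, intersecting with $\check{\mathcal{P}}$ gives $\check{u}=\check{u}'$. The pairing of the $\hat{\mathcal{P}}$-part $\hat{E}^{\hat{u}}_i$ with its $\check{\mathcal{P}}$-part then determines $\check{E}^{\check{u}}_{\sigma(i)}=\check{E}^{\check{u}}_{\sigma'(i)}$. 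The lines within one family are distinct (nonempty disjoint cells), so $\sigma=\sigma'$. Warp families $\hat{\mathcal{A}}^{\hat{v}}\cup\check{\mathcal{A}}^{\check{v}}$ are distinguished in the same way, by splitting the lines according to which of $\hat{\mathcal{P}}$ and $\check{\mathcal{P}}$ contains them.

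Finally, (ii) follows by applying (i) to the transposes via Lemma \ref{lem:transposereticulation}. The hypothesis $\hat{n}=\check{n}$ becomes equality of the first parameters of $\hat{\mathcal{R}}^\dag$ and $\check{\mathcal{R}}^\dag$. Transposing the resulting $(\hat{n},\hat{m}+\check{m},\hat{\ell}\check{\ell}\hat{n}!,\hat{k}\check{k})$-reticulation back gives the stated parameters.
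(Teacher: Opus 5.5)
Your proposal is correct and follows essentially the same route as the paper: (a)--(c) split every incidence question over the disjoint union $\hat{\mathcal{P}}\cup\check{\mathcal{P}}$, (d) follows from these, (e) restricts families to $\hat{\mathcal{P}}$ and $\check{\mathcal{P}}$, and (ii) goes by transposition. Your argument for (e) recovers $\hat{u}$ and $\check{u}$ from the restrictions and then $\sigma$ from how the $\hat{\mathcal{P}}$-parts are paired with the $\check{\mathcal{P}}$-parts, and it is in fact more cleanly justified than the paper's; for the line sizes in (d), it is safest to argue directly from the ordered indexing (each weft line meets each of the $\hat{n}$ lines of a warp family exactly once), since Theorem \ref{thm:regular} as literally stated uses the swapped convention.
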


\begin{proof}
We prove (i).  Part (ii) follows from Lemma \ref{lem:transposereticulation}.

\noindent
(ia): For fixed $\hat{u}\in[\hat{k}]$, each point $\hat{P}\in \hat{\mathcal{P}}$ lies in exactly one of $\hat{E}^{\hat{u}}_{1}$, $\ldots$, $E^{\hat{u}}_{\hat{m}}$; 
         if it lies in $E^{\hat{u}}_{\hat{\imath}}$, then it also lies in $\hat{E}^{\hat{u},\check{u},\sigma}_{\hat{\imath}}$ for each $\check{u}\in[\check{k}]$.
     For fixed $\hat{u}\in[\hat{k}]$, each point $\check{P}\in \check{\mathcal{P}}$ lies in exactly one of $\check{E}^{\check{u}}_{1}$, $\ldots$, $\check{E}^{\check{u}}_{\check{m}}$; 
         if it lies in $\check{E}^{\check{u}}_{\check{\imath}}$, then it also lies in $\hat{E}^{\hat{u},\check{ s},\sigma}_{\sigma^{-1}(\check{\imath})}$ for each $\hat{u}\in[\hat{k}]$.
Thus, every element of $\hat{\mathcal{P}}\cup\check{\mathcal{P}}$ is in at least one of the lines in $\mathcal{E}^{\hat{u},\check{u},\sigma}$.  
These lines are disjoint by construction since distinct  $\hat{E}^{\hat{u}}_{\ell}$  are disjoint and distinct $\check{E}^{\check{u}}_{\sigma(\ell)}$ are disjoint.
     
\noindent
(ib): Each point $\hat{P}\in\hat{\mathcal{P}}$ lies in exactly one of $\hat{A}^{\hat{v}}_{1}$, $\ldots$, $\hat{A}^{\hat{v}}_{\hat{n}}$ in both $\hat{\mathcal{R}}$ and  $\mathcal{R}$.
      Each point   $\check{P}\in \check{\mathcal{P}}$ lies in exactly one of $\check{A}^{\check{v}}_{1}$, $\ldots$, $\check{A}^{\check{v}}_{\check{n}}$ in both  $\check{\mathcal{R}}$ and $\mathcal{R}$. 
      Thus,  $\mathcal{A}^{\hat{v},\check{v}}$ is a partition of  $\mathcal{P}$.

\noindent     
(ic): The intersections follow from the construction.  
      Note that  $|\hat{E}^{\hat{u}}_{i} \cap  \hat{A}^{\hat{v}}_{\hat{\jmath}} |=1$ and $|\check{E}^{\check{u}}_{\sigma(i)}\cap \check{A}^{\check{v}}_{\check{\jmath}}|=1$ 
      since  $\hat{\mathcal{R}}$  and  $\check{\mathcal{R}}$ are reticulations.

\noindent
(id): Clear from Definition \ref{def:reticulation} and parts (i)--(iii).
 
 \noindent
(ie):   If ${R}$ is not repetition-free because some  ${\mathcal{A}}^{\hat{\jmath},\check{\jmath}}$ and  ${\mathcal{A}}^{\hat{v},\check{v}}$ are equal for distinct $(\hat{\jmath},\check{\jmath})$ and $(\hat{v},\check{v})$, 
         then by construction, both $\hat{\mathcal{A}}^{\hat{\jmath}}=\hat{\mathcal{A}}^{\hat{v}}$ and  $\check{\mathcal{A}}^{\check{\jmath}}=\check{\mathcal{A}}^{\check{v}}$. 
         So, one of $\hat{\mathcal{R}}$ and $\check{\mathcal{R}}$ has a repeated warp family. 
         If $\mathcal{R}$ is not repetition-free because some 
         $\mathcal{E}^{\hat{\imath},\check{\imath},\sigma} =  \mathcal{E}^{\hat{u},\check{u},\tau}$ for some distinct $(\hat{\imath},\check{\imath},\sigma)$ and $(\hat{u},\check{u},\tau)$.
         By construction $\hat{\mathcal{E}}^{\hat{\imath}}=\hat{\mathcal{E}}^{\hat{u}}$, $\check{\mathcal{E}}^{\sigma(\check{\imath})}=\check{\mathcal{E}}^{\tau(\check{u})}$. 
         If  $\hat{i}\not=\hat{u}$, then $\hat{\mathcal{R}}$ is not repetition-free, so suppose  $\hat{i}=\hat{u}$. Given this, were $\sigma\not=\tau$, then $\check{\mathcal{E}}^{\sigma(\check{\imath})}\not=\check{\mathcal{E}}^{\tau(\check{u})}$.   
         Thus, we may assume $\sigma=\tau$.  Now, this would force $\check{\imath}\not=\check{u}$, so $\check{\mathcal{R}}$ is not repetition-free. Thus, $\mathcal{R}$ is repetition-free.
\end{proof}

The operator $\boxbar$ reminds us that we are placing the reticulations side-by-side, and the operator $\boxminus$ reminds us that we are stacking the first reticulation over the second. 

 \begin{example}\label{ex:splice}
 We offer a small example of warp splicing. 
 
\noindent
$\left(\!\!\!\left\{
\begin{tikzpicture}[scale=.30,baseline=3.5ex]
\foreach \x in {1,2,3, 4}. \foreach \y in {1,2, 3}
{ \draw[ red] (1,\y)--(4,\y); };
\gridpoints{3}{4}
\end{tikzpicture},
\begin{tikzpicture}[scale=.30,baseline=3.5ex]
\draw[thick,magenta] (1,2)--(2,2) -- (3, 3) -- (4,1) ;
\draw[magenta] (1,1)--(2,3)--(3,1)--(4,3); 
\draw[ultra thick, magenta]  (1,3)--(2,1)--(3, 2)--(4, 2);
\gridpoints{3}{4}
\end{tikzpicture}
\right\}\!, 
\left\{
 \begin{tikzpicture}[scale=.30,baseline=3.5ex]
\foreach \x in {1,2,3, 4}. \foreach \y in {1,2, 3}
{ \draw[ blue] (\x,1)--(\x, 3); };
\gridpoints{3}{4}
\end{tikzpicture},
\begin{tikzpicture}[scale=.30,baseline=3.5ex]
\draw[ultra thick, cyan] (4,3)--(1,2)--(2,1);
\draw[cyan] (1,3)--(3,1);
\draw[thick, cyan] (2,3)--(4,1);
\draw[ultra thick, cyan] (1,1)--(4,2)--(3,3);
\gridpoints{3}{4}
\end{tikzpicture}
\right\}\!\!\!\right)$
$\!\boxminus\!$
$\left(\!\!\left\{
\begin{tikzpicture}[scale=.30,baseline=3.1ex]
\foreach \y in {1,2}
{ \draw[ red] (1,\y)--(4,\y); };
\gridpoints{2}{4}
\end{tikzpicture},
\begin{tikzpicture}[scale=.30,baseline=3.1ex]
\draw[magenta] (1,1)--(2,2)--(3,1)--(4,2); 
\draw[ultra thick, magenta]  (1,2)--(2,1)--(3, 2)--(4, 1);
\gridpoints{2}{4}
\end{tikzpicture}
\right\}\!, 
\left\{
 \begin{tikzpicture}[scale=.30,baseline=3.1ex]
\foreach \x in {1,2,3, 4}
{ \draw[ blue] (\x,1)--(\x, 2); };
\gridpoints{2}{4}
\end{tikzpicture},\,
\begin{tikzpicture}[scale=.30,baseline=3.1ex]
\draw[ cyan] (1,2)--(3,1);
\draw[ cyan] (2,2)--(4,1);
\draw[ultra thick, cyan] (1,1)--(3,2);
\draw[ultra thick, cyan] (4,2)--(2,1);
\gridpoints{2}{4}
\end{tikzpicture}
\right\}\!\!\right)=$

\smallskip
\noindent
$\left(\!\!\left\{
\begin{tikzpicture}[scale=.30,baseline=23]
\foreach \y in {1,2,3,4,5}
{ \draw[ red] (1,\y)--(4,\y); };
\gridpoints{5}{4}
\end{tikzpicture},\,
\begin{tikzpicture}[scale=.30,baseline=23]
\draw[magenta] (1,1)--(2,2)--(3,1)--(4,2); 
\draw[ultra thick, magenta]  (1,2)--(2,1)--(3, 2)--(4, 1);

\draw[thick,magenta] (1,4)--(2,4) -- (3, 5) -- (4,3) ;
\draw[magenta] (1,3)--(2,5)--(3,3)--(4,5); 
\draw[ultra thick, magenta]  (1,5)--(2,3)--(3, 4)--(4, 4);

\gridpoints{5}{4}
\end{tikzpicture},
\begin{tikzpicture}[scale=.30,baseline=23]
\draw[magenta] (1,1)--(2,2)--(3,1)--(4,2); 
\draw[ultra thick, magenta]  (1,2)--(2,1)--(3, 2)--(4, 1);

\foreach \y in {3,4,5}
{ \draw[ red] (1,\y)--(4,\y); };

\gridpoints{5}{4}
\end{tikzpicture},
\begin{tikzpicture}[scale=.30,baseline=23]

\foreach \y in {1,2}
{ \draw[ red] (1,\y)--(4,\y); };

\draw[thick,magenta] (1,4)--(2,4) -- (3, 5) -- (4,3) ;
\draw[magenta] (1,3)--(2,5)--(3,3)--(4,5); 
\draw[ultra thick, magenta]  (1,5)--(2,3)--(3, 4)--(4, 4);

\gridpoints{5}{4}
\end{tikzpicture}
\right\}\!, 
\left\{\!\!\!
 \begin{tikzpicture}[scale=.30,baseline=23]
\foreach \x in {1,2,3, 4}
 { \draw[ blue] (\x,3)--(\x, 5); };
 \node[ ]at (2.5,2.5) {\hbox{\tiny permutation}}; 
\foreach \x in {1,2,3, 4}
{ \draw[ blue] (\x,1)--(\x, 2); };
\gridpoints{5}{4}
\end{tikzpicture}\!\!,
 \begin{tikzpicture}[scale=.30,baseline=23]
\draw[ultra thick, cyan] (4,5)--(1,4)--(2,3);
\draw[cyan] (1,5)--(3,3);
\draw[thick, cyan] (2,5)--(4,3);
\draw[ultra thick, cyan] (1,3)--(4,4)--(3,5);
 \node[ ]at (2.5,2.5) {\hbox{\tiny permutation}}; 
\foreach \x in {1,2,3, 4}
{ \draw[ blue] (\x,1)--(\x, 2); };
\gridpoints{5}{4}
\end{tikzpicture}\!\!,
 \begin{tikzpicture}[scale=.30,baseline=23]
\foreach \x in {1,2,3, 4}
 { \draw[ blue] (\x,3)--(\x, 5); };
 \node[ ]at (2.5,2.5) {\hbox{\tiny permutation}}; 
\draw[ cyan] (1,2)--(3,1);
\draw[ cyan] (2,2)--(4,1);
\draw[ultra thick, cyan] (1,1)--(3,2);
\draw[ultra thick, cyan] (4,2)--(2,1);
\gridpoints{5}{4}
\end{tikzpicture}\!\!,
\begin{tikzpicture}[scale=.30,baseline=23]
\draw[ultra thick, cyan] (4,5)--(1,4)--(2,3);
\draw[cyan] (1,5)--(3,3);
\draw[thick, cyan] (2,5)--(4,3);
\draw[ultra thick, cyan] (1,3)--(4,4)--(3,5);
 \node[ ]at (2.5,2.5) {\hbox{\tiny permutation}}; 
\draw[ cyan] (1,2)--(3,1);
\draw[ cyan] (2,2)--(4,1);
\draw[ultra thick, cyan] (1,1)--(3,2);
\draw[ultra thick, cyan] (4,2)--(2,1);
\gridpoints{5}{4}
\end{tikzpicture}
\!\!\!\right\}\!\!\right)$\!.

The permutations can be any bijective correspondence between points the points in the third and fourth rows.  
For instance, take the permutation to be 
\begin{tikzpicture}[scale=.30]
\draw[thick, brown] (1, 1)--(2, 2);
\draw[thick, brown] (2, 1)--(1, 2);
\draw[thick, brown] (3 ,1)--(4, 2);
\draw[thick, brown] (4, 1)--(3, 2);
\gridpoints{2}{4}
\end{tikzpicture}, or
\begin{tikzpicture}[scale=.30]
\draw[thick, brown] (1, 1)--(2, 2);
\draw[thick, brown] (2, 1)--(3, 2);
\draw[thick, brown] (3, 1)--(4, 2);
\draw[thick, brown] (4, 1)--(1, 2);
\gridpoints{2}{4}
\end{tikzpicture}.
 \end{example}

We immediately get the following.

\begin{corollary}
With reference to Theorem \ref{thm:splice}, 
the cooperative matrices of $\hat{\mathcal{R}}\boxbar\check{\mathcal{R}}$ in block form are as follows.
For  $\hat{u}\in[\hat{k}]$, $\check{u}\in[\check{k}]$,
          $\hat{v}\in[\hat{\ell}]$, $\check{v}\in[\check{\ell}]$
\[ C^{\hat{u},\check{u},\sigma}=\mu(\mathcal{E}^{\hat{u},\check{u},\sigma}) 
        = [\, C^{\hat{u}} \ \sigma(C^{\check{u}})\, ] \quad \hbox{and}\quad
   A^{\hat{v},\check{v}} =\mu(\mathcal{A}^{\hat{v},\check{v}}) =  [\, R^{\hat{u}}  \ R^{\check{u}}  + \hat{n}\,],
\]
where 
\begin{align*}
   C^{\hat{u}} &= \mu(\mathcal{E}^{\hat{u}}), \qquad &
    C^{\check{u}} &= \mu(\mathcal{E}^{\check{u}}),  &
      \sigma(C^{\check{u}})(i,j) &= \sigma(C^{\check{u}}(i,j)),\\
    R^{\hat{u}} &= \mu(\mathcal{A}^{\hat{u}}),\qquad &
    R^{\check{u}} &= \mu(\mathcal{E}^{\check{u}}), &
    (R^{\check{u}} + \hat{n})(i,j) &= R^{\check{u}}(i,j)  + \hat{n}.
\end{align*}
\end{corollary}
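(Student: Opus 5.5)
The plan is to fix orderings of the spliced reticulation that make its grid visibly the two grids side by side, and then to read off the cooperative matrices directly from Definition \ref{def:gridarraymatrices}. For the initial ordering of $\hat{\mathcal{R}}\boxbar\check{\mathcal{R}}$ I take the weft family $\mathcal{E}^{1,1,\mathrm{id}}$ with lines indexed by $i\in[\hat{m}]$, and the warp family $\mathcal{A}^{1,1}$ with lines ordered $\hat{A}^1_1,\ldots,\hat{A}^1_{\hat{n}},\check{A}^1_1,\ldots,\check{A}^1_{\check{n}}$. Thus the $\hat{\jmath}$-th line of $\mathcal{A}^{\hat v,\check v}$ is $\hat{A}^{\hat v}_{\hat\jmath}$ for $\hat\jmath\le\hat n$, and the $(\hat n+\check\jmath)$-th line is $\check{A}^{\check v}_{\check\jmath}$. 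Each family $\mathcal{E}^{\hat u,\check u,\sigma}$ is indexed by $i\in[\hat m]$ as in Theorem \ref{thm:splice}(ia).

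With these choices, Theorem \ref{thm:splice}(ic) gives the grid points. For $j\le\hat n$ we have $P_{i,j}=E^{1,1,\mathrm{id}}_i\cap\hat A^1_j=\hat P_{i,j}$. For $j=\hat n+\check\jmath$ we have $P_{i,j}=\check E^1_i\cap\check A^1_{\check\jmath}=\check P_{i,\check\jmath}$. So the grid of $\hat{\mathcal{R}}\boxbar\check{\mathcal{R}}$ is the $\hat m\times\hat n$ grid of $\hat{\mathcal{R}}$ placed to the left of the $\check m\times\check n$ grid of $\check{\mathcal{R}}$.

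Next I evaluate each matrix blockwise.
\begin{itemize}
\item On the left block, $\hat P_{i,j}\in E^{\hat u,\check u,\sigma}_q$ iff $\hat P_{i,j}\in\hat E^{\hat u}_q$. Hence that block of $C^{\hat u,\check u,\sigma}$ is $C^{\hat u}$.
\item On the right block, $\check P_{i,\check\jmath}\in E^{\hat u,\check u,\sigma}_q$ iff $\check P_{i,\check\jmath}\in\check E^{\check u}_{\sigma(q)}$, i.e.\ iff $\sigma(q)=C^{\check u}(i,\check\jmath)$. Hence the entry there is $\sigma^{-1}(C^{\check u}(i,\check\jmath))$.
\item For the warp matrices, a left-block point lies in the $r$-th line of $\mathcal{A}^{\hat v,\check v}$ with $r=R^{\hat v}(i,j)$. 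A right-block point lies in the line numbered $\hat n+R^{\check v}(i,\check\jmath)$. This gives $[\,R^{\hat v}\ \ R^{\check v}+\hat n\,]$.
\end{itemize}

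The main obstacle is bookkeeping rather than mathematics, and there are two points to handle. First, the computation yields $\sigma^{-1}$ applied entrywise, not $\sigma$. I will handle this by noting that $\sigma$ ranges over all of $S_{\hat m}$, so $\sigma\mapsto\sigma^{-1}$ merely relabels the families. Equivalently, I can read $\sigma(C^{\check u})$ with the convention of Theorem \ref{thm:isotopy}(ii), as an isotopic relabeling of entries. Second, the corollary's display has typographical slips: the warp blocks should use $R^{\hat v}$ and $R^{\check v}=\mu(\check{\mathcal{A}}^{\check v})$. I would state the corrected indices explicitly.

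All remaining verifications are immediate from Lemma \ref{lem:gridembed}(iv),(v).
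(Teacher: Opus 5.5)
Your proposal is correct and is the same direct read-off from the construction in Theorem \ref{thm:splice} that the paper intends; the paper states the corollary as immediate and gives no separate proof. Your explicit choice of initial families $\mathcal{E}^{1,1,\mathrm{id}}$ and $\mathcal{A}^{1,1}$, with the $\hat{A}$-lines listed first, is the implicit ordering the block form requires, and your two bookkeeping corrections are right: the literal computation gives $\sigma^{-1}$ entrywise, which matches the paper's convention in Theorem \ref{thm:isotopy}(ii) and is harmless after reindexing over $S_{\hat m}$, and the warp blocks should read $[\,R^{\hat v}\ \ R^{\check v}+\hat n\,]$ with $R^{\check v}=\mu(\check{\mathcal{A}}^{\check v})$.
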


\begin{definition}
We say that a reticulation is \emph{weft reducible} if it is contained in the weft splice of two reticulations with nonempty warp families and that it is \emph{weft reduced}  otherwise.  
We say that a reticulation is \emph{warp reducible}  if it is contained in the warp splice of two reticulations with nonempty weft families and that it is \emph{warp reduced}  otherwise.
We say that a reticulation is \emph{reducible} if it is either weft and warp reducible, and that it is 
\emph{reduced} if it is both weft and warp reduced.
\end{definition}

\begin{theorem}
A reticulation is weft (warp) reducible if and only if there is a  partition of the points into two parts so that all points on any given warp (weft) line lie entirely in one of the cells of the partition. 
\end{theorem}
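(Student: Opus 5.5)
We prove the weft case; the warp case then follows by applying it to the transpose $\mathcal{R}^\dag$ (Lemma \ref{lem:transposereticulation}), since by Theorem \ref{thm:splice}(ii) the warp splice is the transpose of the weft splice of transposes. Throughout, ``contained in'' means same point set and sub-(multi)sets of families. We also read the theorem as requiring both cells of the partition to be nonempty. This matches the requirement in the definition that the two spliced reticulations be genuine reticulations with nonempty warp families.

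For the forward direction, suppose $\mathcal{R}$ is contained in $\hat{\mathcal{R}}\boxbar\check{\mathcal{R}}$. Take the partition $\mathcal{P}=\hat{\mathcal{P}}\cup\check{\mathcal{P}}$, which is nontrivial: each of $\hat{\mathcal{R}}$ and $\check{\mathcal{R}}$ has a nonempty warp family partitioning its point set, and by the standing nondegeneracy conventions these point sets are nonempty. Every warp family of the splice has the form $\mathcal{A}^{\hat v,\check v}=\hat{\mathcal{A}}^{\hat v}\cup\check{\mathcal{A}}^{\check v}$. Each of its lines is therefore a line of $\hat{\mathcal{R}}$, lying in $\hat{\mathcal{P}}$, or a line of $\check{\mathcal{R}}$, lying in $\check{\mathcal{P}}$. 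Since every warp family of $\mathcal{R}$ is one of these, each warp line of $\mathcal{R}$ lies in one cell.

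For the converse, let $\mathcal{P}=\hat{\mathcal{P}}\sqcup\check{\mathcal{P}}$ be a partition into nonempty cells such that every warp line lies in a single cell. The first step is to show that every weft line meets each cell in the same number of points, with these numbers independent of the line. Fix a warp family $\mathcal{A}$. Its lines in $\hat{\mathcal{P}}$ partition $\hat{\mathcal{P}}$; say there are $\hat n$ of them, and let $\check n$ be the number in $\check{\mathcal{P}}$. By (R-1) each weft line meets each of these lines once, so it has exactly $\hat n$ points in $\hat{\mathcal{P}}$. Hence $\hat n=|E\cap\hat{\mathcal{P}}|$ for any weft line $E$, so $\hat n$ does not depend on $\mathcal{A}$. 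Moreover $\hat n\ge1$, because $\hat{\mathcal{P}}\ne\emptyset$ is covered by the lines of $\mathcal{A}$.

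Next, define $\hat{\mathcal{R}}$ on $\hat{\mathcal{P}}$. Its weft families are the restrictions $\hat{\mathcal{E}}=\{E\cap\hat{\mathcal{P}}:E\in\mathcal{E}\}$ for $\mathcal{E}\in\mathcal{F}_{\mathrm{weft}}$, and its warp families are $\hat{\mathcal{A}}=\{A\in\mathcal{A}:A\subseteq\hat{\mathcal{P}}\}$. Define $\check{\mathcal{R}}$ on $\check{\mathcal{P}}$ in the same way. The restrictions are nonempty, so they partition $\hat{\mathcal{P}}$. The sets $\hat{\mathcal{A}}$ partition $\hat{\mathcal{P}}$ because every warp line lies in one cell. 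For (R-1), a restricted weft line and a warp line contained in $\hat{\mathcal{P}}$ meet in exactly the single point where the original lines meet. So both are reticulations, and both have nonempty warp families.

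Finally, embed $\mathcal{R}$ in $\hat{\mathcal{R}}\boxbar\check{\mathcal{R}}$. Order the lines of each restricted weft family compatibly with the parent family $\mathcal{E}$. Then with $\sigma$ the identity, $\mathcal{E}$ is exactly $\mathcal{E}^{\hat u,\check u,\mathrm{id}}$ for the indices $\hat u,\check u$ coming from $\mathcal{E}$ itself. Each warp family $\mathcal{A}$ is $\hat{\mathcal{A}}\cup\check{\mathcal{A}}$. Thus $\mathcal{R}$ is contained in the weft splice.

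The main obstacle is a bookkeeping mismatch with Theorem \ref{thm:splice}, which assumes ordered reticulations with $\hat m=\check m$ and refers to a splice of specific line families. Both the equal counts of lines and the common $m$ come out of the restriction: each weft family has $m$ lines, each meeting both cells. Multiplicities are harmless because the families are multisets indexed by the parent families.
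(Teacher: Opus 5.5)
Your proof is correct and takes the same route as the paper: the forward direction observes that the warp lines of $\hat{\mathcal{R}}\boxbar\check{\mathcal{R}}$ lie in $\hat{\mathcal{P}}$ or in $\check{\mathcal{P}}$, and the converse restricts weft lines to the two cells and sorts warp lines by cell, giving $\hat{\mathcal{R}}$ and $\check{\mathcal{R}}$ with $\mathcal{R}$ contained in their weft splice. You also fill in details the paper leaves implicit, namely that every weft line meets each cell in a fixed positive number of points (so the restricted lines are nonempty and $\hat m=\check m$) and that taking $\sigma=\mathrm{id}$ with compatible orderings realizes each family of $\mathcal{R}$ in the splice.
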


\begin{proof} 
If a reticulation $\mathcal{R}$ is weft reducible, then it is contained in the weft splice $\hat{\mathcal{R}}\boxbar\check{\mathcal{R}}$ of two reticulations.  
By construction (Theorem  \ref{thm:splice}), all warp lines consist of either points from $\hat{\mathcal{R}}$ or points from $\check{\mathcal{R}}$.  Thus, the forward implication holds.

Conversely, suppose that there is a  partition of the points into two parts $\mathcal{P}=\hat{\mathcal{P}}\cup\check{\mathcal{P}}$ so that all points on any given warp (weft) line lie entirely in one of the cells of the partition. 
For each weft family $\mathcal{E}^u$ of $\mathcal{R}$, let $\hat{\mathcal{E}}^u$ and $\check{\mathcal{E}}^u$  be the restrictions of each line in $\mathcal{E}^u$ to $\hat{\mathcal{P}}$ and $\check{\mathcal{P}}$, respectively. 
For each warp family, $\mathcal{A}^v$ of $\mathcal{R}$, let $\hat{\mathcal{A}}^v$ and $\check{\mathcal{A}}^v$  be the sets of lines whose points are in $\hat{\mathcal{P}}$ and $\check{\mathcal{P}}$, respectively.  
We claim that $\hat{\mathcal{R}}=(\hat{P}, \{\hat{\mathcal{E}}^u\}, \{\hat{\mathcal{A}}^v\})$ and $\check{\mathcal{R}}=(\check{P}, \{\check{\mathcal{E}}^u\}, \{\check{\mathcal{A}}^v\})$ are reticulations.  
The unique intersection property (R-1) holds for each reticulation since the unique intersection of 
any line from $\mathcal{E}^u$ and any line from $\mathcal{A}^v$ is in either $\hat{P}$ or $\check{P}$.  
The partition property (R-2) holds for each reticulation since the partitions given by $\mathcal{E}^u$ and  $\mathcal{A}^v$ are refined to partitions of $\hat{P}$ and $\check{P}$.  
Now $\mathcal{R}$ is contained in $\hat{\mathcal{R}}\boxbar\check{\mathcal{R}}$ by construction.  Note that repetition may be introduced by this construction. 
\end{proof}

\begin{lemma}
With reference to Theorem \ref{thm:splice},
 $\hat{\mathcal{R}}\boxbar\check{\mathcal{R}}$ and $\check{\mathcal{R}}\boxbar\hat{\mathcal{R}}$ are isotopic, and $\hat{\mathcal{R}}\boxminus\check{\mathcal{R}}$ and $\check{\mathcal{R}}\boxminus\hat{\mathcal{R}}$ are isotopic. 
\end{lemma}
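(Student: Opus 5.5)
Since the warp splice is defined as $\hat{\mathcal{R}}\boxminus\check{\mathcal{R}}=(\hat{\mathcal{R}}^\dag\boxbar\check{\mathcal{R}}^\dag)^\dag$, the second claim reduces to the first. Transposition (Lemma \ref{lem:transposereticulation}) carries an isotopism to an isotopism by swapping the roles of the $\rho_u$ and the $\kappa_v$. So the work is the weft splice. There the underlying point set $\hat{\mathcal{P}}\cup\check{\mathcal{P}}$ is literally the same in both orders, and I will show that the two reticulations in fact have the same families. They differ only in how the families and the lines inside them are indexed, so the isotopism is a relabelling of lines. That is exactly what Theorem \ref{thm:isotopy} describes, together with a parastrophism-style reordering of families.

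The first step compares the warp families. In $\hat{\mathcal{R}}\boxbar\check{\mathcal{R}}$ they are $\mathcal{A}^{\hat v,\check v}=\hat{\mathcal{A}}^{\hat v}\cup\check{\mathcal{A}}^{\check v}$, and in $\check{\mathcal{R}}\boxbar\hat{\mathcal{R}}$ they are $\check{\mathcal{A}}^{\check v}\cup\hat{\mathcal{A}}^{\hat v}$. These are the same sets of lines. What changes is the enumeration: the first order puts the $\hat n$ lines of $\hat{\mathcal R}$ first, and the second puts the $\check n$ lines of $\check{\mathcal R}$ first. The relabelling is the permutation $\kappa\in\sym{\hat n+\check n}$ that cyclically shifts the two blocks. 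On cooperative matrices, using the block form in the Corollary after Theorem \ref{thm:splice}, $[\,R^{\hat v}\ \ R^{\check v}+\hat n\,]$ becomes $[\,R^{\check v}\ \ R^{\hat v}+\check n\,]$ after the columns are permuted by the same block shift. This is item (iii) of Theorem \ref{thm:isotopy} applied for $v=1$, and item (iv) applied for the other warp families.

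The second step compares the weft families. Here $\hat E^{\hat u}_i\cup\check E^{\check u}_{\sigma(i)}$ must be matched with a line of the form $\check E^{\check u}_{i'}\cup\hat E^{\hat u}_{\tau(i')}$. Setting $\tau=\sigma^{-1}$ and $i'=\sigma(i)$ gives equality. So the family $\mathcal{E}^{\hat u,\check u,\sigma}$ on the one side equals $\mathcal{E}^{\check u,\hat u,\sigma^{-1}}$ on the other, with its lines re-enumerated by $\sigma$. The lines inside each family are therefore permuted by $\rho=\sigma$, which is item (ii) of Theorem \ref{thm:isotopy}, or item (i) for the first family. The map $(\hat u,\check u,\sigma)\mapsto(\check u,\hat u,\sigma^{-1})$ is a bijection of index sets. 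It therefore reorders the weft families, which is a parastrophism with $\beta$ and $\alpha=(1)$, and the multiset of families is unchanged.

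The main obstacle is bookkeeping for the distinguished first families. Theorem \ref{thm:isotopy} treats $\mathcal{E}^1$ and $\mathcal{A}^1$ differently, because they fix the grid, and the paper's definition of isotopism does not explicitly allow reordering whole families. I would handle this in one of two ways. The first is to observe that isotopy is meant up to the ordering of families, so compose with a parastrophism fixing weft and warp. The second is to choose the orderings so that $\mathcal{E}^{1,1,\mathrm{id}}$ and $\mathcal{A}^{1,1}$ come first in both splices. In that case the grid changes only by the block column shift and a row relabelling, and every remaining difference is a permutation of entries within a single family, as computed above.
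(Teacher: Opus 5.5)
Your proposal is correct and is essentially the paper's argument: the paper's one-line proof just observes that permuting the columns of $\hat{\mathcal{R}}\boxbar\check{\mathcal{R}}$ (your block shift) gives $\check{\mathcal{R}}\boxbar\hat{\mathcal{R}}$. You fill in what that line leaves implicit: the entry relabellings within families, the identification of $\mathcal{E}^{\hat u,\check u,\sigma}$ with $\mathcal{E}^{\check u,\hat u,\sigma^{-1}}$, the bookkeeping for the ordering of families, and the reduction of the warp splice to the weft splice via transposition.
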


\begin{proof} 
The columns of  $\hat{\mathcal{R}}\boxbar\check{\mathcal{R}}$ can be permuted to get the columns of $\check{\mathcal{R}}\boxbar\hat{\mathcal{R}}$.
\end{proof} 

\section{A direct product}
\label{sec:blockproduct}

We describe a construction of a reticulation from two smaller reticulations in which each point in a family of weft/warp lines in the first reticulation is replaced with a family of lines of the same type from the second reticulation to form a new family of lines.  
This product is analogous to the direct or Kronecker product of mutually orthogonal Latin squares \cite[Section 4.3]{MR1096296}, \cite[Lemma 1.8]{MR3837138}, \cite[Theorem 2.6]{MR1644242}.

\begin{theorem}
\label{thm:blockproduct}
Let 
$\hat{\mathcal{R}}=(\hat{\mathcal{P}},\hat{\mathcal{F}}_{\mathrm{weft}},\hat{\mathcal{F}}_{\mathrm{warp}})$
and 
$\check{\mathcal{R}}=(\check{\mathcal{P}},\check{\mathcal{F}}_{\mathrm{weft}},\check{\mathcal{F}}_{\mathrm{warp}})$
be ordered reticulations with respective parameters $(\hat{m},\hat{n},\hat{k},\hat{\ell})$ and $(\check{m}, \check{n}, \check{k}, \check{\ell})$. Construct the following.
\begin{enumerate}
\item  Let $\mathcal{P}=\hat{\mathcal{P}}\times\check{\mathcal{P}}$ (Cartesian product).
\item For each $\hat{\mathcal{E}}\in \hat{\mathcal{F}}_{\mathrm{weft}}$, and
         for each function $\phi:\hat{\mathcal{P}}\rightarrow  \check{\mathcal{F}}_{\mathrm{weft}}$,
         form a family of lines $\mathcal{E}^{\hat{\mathcal{E}}, \phi}$ on $\mathcal{P}$ as follows:
         $(\hat{P}, \check{P})$ and $(\hat{Q}, \check{Q})\in \mathcal{P}$ are collinear in $\mathcal{E}^{\hat{\mathcal{E}}, \phi}$ 
         whenever $\hat{P}$ and $\hat{Q}$ are collinear in $\hat{\mathcal{E}}$ and
         the respective lines containing $\check{P}$ and $\check{Q}$ in $\phi(\hat{P})$ 
         and $\phi(\hat{Q})$ have the same indices.
 \item For each $\hat{\mathcal{A}}\in \hat{\mathcal{F}}_{\mathrm{warp}}$, and
         for each function $\psi:\hat{\mathcal{P}}\rightarrow  \check{\mathcal{F}}_{\mathrm{warp}}$,
         form a family of lines $\mathcal{A}^{\hat{\mathcal{A}}, \psi}$ on ${\mathcal{P}}$ as follows:
         $(\hat{P}, \check{P})$ and $(\hat{Q}, \check{Q})\in \mathcal{P}$ are collinear in $\mathcal{A}^{\hat{\mathcal{A}}, \psi}$ 
         whenever $\hat{P}$ and $\hat{Q}$ are collinear in $\hat{\mathcal{A}}$ and
         the respective lines containing $\check{P}$ and $\check{Q}$ in $\psi(\hat{P})$ 
         and $\psi(\hat{Q})$ have the same indices.
\end{enumerate}        
Then 
${\mathcal{R}}=
({\mathcal{P}},
        {\mathcal{F}}_{\mathrm{weft}}
            = \{\mathcal{E}^{\hat{\mathcal{E}}, \phi} : \hat{\mathcal{E}}\in \hat{\mathcal{F}}_{\mathrm{weft}}, \phi:\hat{\mathcal{P}}\rightarrow  \check{\mathcal{F}}_{\mathrm{weft}}  \},
         {\mathcal{F}}_{\mathrm{warp}} 
             = \{\mathcal{A}^{\hat{\mathcal{A}}, \psi} :  \hat{\mathcal{A}}\in \hat{\mathcal{F}}_{\mathrm{warp}}, \psi:\hat{\mathcal{P}}\rightarrow  \check{\mathcal{F}}_{\mathrm{warp}}\})$
 is a reticulation with parameters 
    $(\hat{m}\check{m}, \hat{n}\check{n}, \hat{k}\check{k}^{\hat{m}\hat{n}}, \hat{\ell}\check{\ell}^{\hat{m}\hat{n}})$.   
    We refer to ${\mathcal{R}}$ as the \emph{direct product} of $\hat{\mathcal{R}}$ and $\check{\mathcal{R}}$ and denote it by  ${\mathcal{R}}= \hat{\mathcal{R}}\otimes \check{\mathcal{R}}$.
\end{theorem}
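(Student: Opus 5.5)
The plan is to describe the lines of each new family explicitly, check (R-1) and (R-2) directly, and then read off the parameters from Theorem \ref{thm:regular}. By Lemma \ref{lem:reticbypair}, it suffices to fix one weft family $\mathcal{E}^{\hat{\mathcal{E}},\phi}$ and one warp family $\mathcal{A}^{\hat{\mathcal{A}},\psi}$ and show that $(\mathcal{P},\{\mathcal{E}^{\hat{\mathcal{E}},\phi}\},\{\mathcal{A}^{\hat{\mathcal{A}},\psi}\})$ is a reticulation. Since $\check{\mathcal{R}}$ is ordered, each $\check{\mathcal{F}}_{\mathrm{weft}}$-family $\phi(\hat P)$ has its lines indexed by $[\check m]$. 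Write $\hat{\mathcal{E}}=\{\hat E_i\}_{i\in[\hat n]}$, where by Theorem \ref{thm:regular} there are $\hat n$ weft lines per family.

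The first step is to make the collinearity relation concrete. For $i\in[\hat n]$ and $q\in[\check m]$, set
\[ E_{i,q}=\{(\hat P,\check P) : \hat P\in \hat E_i,\ \check P \hbox{ lies on line } q \hbox{ of } \phi(\hat P)\}. \]
Two points of $\mathcal{P}$ are collinear in $\mathcal{E}^{\hat{\mathcal{E}},\phi}$ exactly when they lie in a common $E_{i,q}$, so the relation in the statement is an equivalence relation and its classes are the $E_{i,q}$. This is the one place where care is needed, because the definition compares line indices across different families $\phi(\hat P)$ and $\phi(\hat Q)$. The point is that $(\hat P,\check P)$ determines $i$ uniquely, since $\hat{\mathcal{E}}$ partitions $\hat{\mathcal{P}}$, and determines $q$ uniquely, since $\phi(\hat P)$ partitions $\check{\mathcal{P}}$. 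Hence $\{E_{i,q}\}$ is a partition of $\mathcal{P}$, and (R-2) holds for weft families. The same argument, with lines $A_{j,r}$ for $j\in[\hat m]$ and $r\in[\check n]$, gives (R-2) for warp families.

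For (R-1), intersect $E_{i,q}$ with $A_{j,r}$. A point $(\hat P,\check P)$ in both must have $\hat P\in \hat E_i\cap \hat A_j$. By (R-1) in $\hat{\mathcal{R}}$ this forces $\hat P=\hat P_0$, a single point. Then $\check P$ must lie on line $q$ of the weft family $\phi(\hat P_0)$ and on line $r$ of the warp family $\psi(\hat P_0)$. By (R-1) in $\check{\mathcal{R}}$ this determines a unique $\check P_0$. So the intersection is exactly $\{(\hat P_0,\check P_0)\}$.

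Finally, the parameters follow from counting and Theorem \ref{thm:regular}. Each $E_{i,q}$ has $|\hat E_i|\cdot\check m=\hat m\check m$ points, and each $A_{j,r}$ has $\hat n\check n$ points. The weft families are indexed by pairs $(\hat{\mathcal{E}},\phi)$ with $\phi:\hat{\mathcal{P}}\to\check{\mathcal{F}}_{\mathrm{weft}}$, and $|\hat{\mathcal{P}}|=\hat m\hat n$, which gives $\hat k\check k^{\hat m\hat n}$ families counted as a multiset; the warp count $\hat\ell\check\ell^{\hat m\hat n}$ is analogous. The reticulation is nondegenerate since $\mathcal{P}$ is nonempty and the families are nonempty. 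I expect the only real obstacle to be the well-definedness of the families in the first step. The verification of (R-1) is immediate once the lines are written as the sets $E_{i,q}$ and $A_{j,r}$.
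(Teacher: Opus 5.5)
Your proof is correct and follows essentially the paper's argument. (R-2) holds because each new family is a partition by construction; you usefully make the classes $E_{i,q}$, $A_{j,r}$ and their well-definedness explicit, which the paper leaves as ``by construction''. (R-1) follows by first forcing $\hat P$ to be the unique point of $\hat E_i\cap\hat A_j$ and then forcing $\check P$ to be the unique common point of the relevant lines of $\phi(\hat P)$ and $\psi(\hat P)$.

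The only blemish is a bookkeeping mismatch inherited from the paper's two conventions. Your count $|\hat E_i|\cdot\check m$ uses Theorem \ref{thm:regular}'s convention that weft lines of $\check{\mathcal{R}}$ have $\check m$ points, so the lines of $\phi(\hat P)$ should be indexed by $[\check n]$ and those of $\psi(\hat P)$ by $[\check m]$, not the reverse; this changes nothing in the argument or the parameters.
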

       
\begin{proof} 
By construction, the lines in 
     each $\mathcal{E}^{\hat{\mathcal{E}}, \phi}$ and 
     each $\mathcal{A}^{\hat{\mathcal{A}}, \psi}$
partition ${P}$, so (R-2) holds for $\mathcal{R}$.  
Fix lines 
      $E\in \mathcal{E}^{\hat{\mathcal{E}}, \phi}$ and 
      $A\in\mathcal{A}^{\hat{\mathcal{A}}, \psi}$.
The restriction of $E$ and $A$ to $\hat{\mathcal{P}}$ are lines 
      $\hat{E}\in\hat{\mathcal{E}}$ and 
      $\hat{A}\in\hat{\mathcal{A}}$.  
Let $\hat{P}$ be the unique intersection of $\hat{\mathcal{E}}$ and $\hat{\mathcal{A}}$, and 
let $\check{P}$ be the unique intersection of $\phi(\hat{P})$ and $\psi(\hat{P})$.   
Then $(\hat{P},\check{P})$ is in the intersection of 
      ${E}$ and 
      each ${A}$.  
Suppose $(\hat{Q},\check{Q})$ is in the intersection of these two lines.
Then $\hat{P}$ and $\hat{Q}$ are  both 
      the same line of $\hat{\mathcal{E}}$ and 
      the same line of $\hat{\mathcal{A}}$.  
Thus, $\hat{P}=\hat{Q}$ by (R-1).  
Now 
      $\phi(\hat{P})=\phi(\hat{Q})$ and 
      $\psi(\hat{P})=\psi(\hat{Q})$, 
so $\check{P}$ and $\check{Q}$ are in the same line of $\phi(\hat{P})$ and the same line of $\psi(\hat{P})$.  
But by (R-1),  $\check{P}$ is the unique intersection of these lines, so $\check{P}=\check{Q}$.  
Hence, (R-1) holds for $\mathcal{R}$. Thus, ${\mathcal{R}}$ is a reticulation.   
The parameters of  ${\mathcal{R}}$ follow from the construction. 
\end{proof}

We describe the cooperative matrices of the direct product.  
The entries of the column- and row-Latin matrices will be taken to be pairs from $[\hat{m}]\times[\check{m}]$ and $[\hat{n}]\times[\check{n}]$, respectively, 
since these are indices for the lines.  
They are formed by a generalized Kronecker product similar to that of regular combinatorial structures. 

\begin{corollary}
With reference to Theorem \ref{thm:blockproduct}, 
write 
    $\mu(\hat{\mathcal{R}}) = (\hat{C}^1, \hat{C}^2, \ldots, \hat{C}^{\hat{k}};$ 
                                               $\hat{R}^1, \hat{R}^2, \ldots, \hat{R}^{\hat{\ell}})$ and
    $\mu(\check{\mathcal{R}}) = (\check{C}^{1}, \check{C}^{2}, \ldots, \check{C}^{\check{k}};$ 
                                                    $\check{R}^{1}, \check{R}^{2}, \ldots, \check{R}^{\check{\ell}})$.
Then for $\hat{u}\in[\hat{k}]$, $\phi:\hat{P}\rightarrow[\check{k}]$, and 
\begin{align*}
   \mu(\mathcal{E}^{\hat{\mathcal{E}}^{\hat{u}}, \phi})({i,j})
         &= (\hat{C}^{\hat{u}}({\hat{\imath},\hat{\jmath}}), \check{C}({\check{\imath},\check{\jmath}})),
          \quad\hbox{ where $i= \hat{m}(\hat{\imath}-1)+\check{\imath}$ and 
                                         $j= \hat{n}(\hat{\jmath}-1)+\check{\jmath}$,}\\
   \mu(\mathcal{A}^{\hat{\mathcal{A}}^{\hat{v}}, \psi})({i,j}) 
         &= (\hat{R}^{\hat{v}}({\hat{\imath},\hat{\jmath}}), \check{R}({\check{\imath},\check{\jmath}})),
          \quad\hbox{ where $i= \hat{m}(\hat{\imath}-1)+\check{\imath}$ and 
                                         $j= \hat{n}(\hat{\jmath}-1)+\check{\jmath}$.}
\end{align*}
In particular, 
      ${C}^{\hat{\mathcal{E}}^u, \phi}=\mu'(\mathcal{E}^{\hat{\mathcal{E}}, \phi})$ and 
      ${R}^{\hat{\mathcal{A}}^v, \psi}=\mu'(\mathcal{A}^{\hat{\mathcal{A}}, \psi})$ are 
$\hat{m}\check{m}\times \hat{n}\check{n}$ matrices with blocks of size $\check{m}\times \check{n}$, which have blocks
\[
   {C}^{\mathcal{E}^u, \phi} = (\hat{C}^u_{\hat{\imath},\hat{\jmath}}, \phi(P_{\hat{\imath},\hat{\jmath}})), \qquad 
   {R}^{\mathcal{A}^v, \psi} = (\hat{R}^v_{\hat{\imath},\hat{\jmath}}, \psi(P_{\hat{\imath},\hat{\jmath}})).
\]
\end{corollary}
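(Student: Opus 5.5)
The plan is to compute the cooperative matrices of $\hat{\mathcal{R}}\otimes\check{\mathcal{R}}$ straight from Definition~\ref{def:gridarraymatrices}, after fixing an ordering of the product under which the grid points become pairs of grid points. First I would order the lines of each new family by pairs of indices. The line of $\mathcal{E}^{\hat{\mathcal{E}}^{\hat u},\phi}$ through $(\hat P,\check P)$ gets the index $(q,q')$, where $\hat P\in\hat E^{\hat u}_q$ and $\check P$ lies on line $q'$ of the family $\phi(\hat P)$. By the collinearity rule in Theorem~\ref{thm:blockproduct}, two points are collinear exactly when these index pairs agree, so the labelling is well defined and bijective onto $[\hat m]\times[\check m]$. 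Warp lines are labelled the same way by pairs in $[\hat n]\times[\check n]$. For the initial ordering I take $\mathcal{E}^{\hat{\mathcal{E}}^1,\phi_0}$ and $\mathcal{A}^{\hat{\mathcal{A}}^1,\psi_0}$, with $\phi_0,\psi_0$ constantly the first families of $\check{\mathcal{R}}$. I then linearize the pair labels by $(\hat\imath,\check\imath)\mapsto i$ and $(\hat\jmath,\check\jmath)\mapsto j$, as in the displayed index formulas. (I would check that the intended linearization is $\check m(\hat\imath-1)+\check\imath$; this is only a bookkeeping convention and does not affect the argument.)

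The key step is to identify the grid point $P_{i,j}$ of the product. I claim $P_{i,j}=(\hat P_{\hat\imath,\hat\jmath},\check P_{\check\imath,\check\jmath})$, where $\hat P_{\hat\imath,\hat\jmath}$ and $\check P_{\check\imath,\check\jmath}$ are the grid points of $\hat{\mathcal{R}}$ and $\check{\mathcal{R}}$. To see this, use the intersection argument in the proof of Theorem~\ref{thm:blockproduct}. The first coordinate of the intersection of weft line $(\hat\imath,\check\imath)$ and warp line $(\hat\jmath,\check\jmath)$ is $\hat E^1_{\hat\imath}\cap\hat A^1_{\hat\jmath}=\hat P_{\hat\imath,\hat\jmath}$. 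The second coordinate is the intersection of line $\check\imath$ of $\phi_0(\hat P)=\check{\mathcal{E}}^1$ with line $\check\jmath$ of $\psi_0(\hat P)=\check{\mathcal{A}}^1$, which is $\check P_{\check\imath,\check\jmath}$. Uniqueness follows from (R-1) for the product. This is also the step I expect to be the main obstacle: everything else depends on the ordering being chosen so that the first coordinate is governed by $\hat{\mathcal{R}}$'s grid and the second by $\check{\mathcal{R}}$'s, and the statement's notation does not make that ordering explicit.

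Reading off entries is then immediate from Definition~\ref{def:gridarraymatrices}(7),(8). The point $P_{i,j}$ lies on the weft line of $\mathcal{E}^{\hat{\mathcal{E}}^{\hat u},\phi}$ with index $(\hat C^{\hat u}(\hat\imath,\hat\jmath),\check C(\check\imath,\check\jmath))$, where $\check C=\mu(\phi(\hat P_{\hat\imath,\hat\jmath}))$ is the matrix of the $\check{\mathcal{R}}$-family selected by $\phi$ at that point. The warp case is symmetric, with $\psi$ and $\check R$.

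Finally, fixing $(\hat\imath,\hat\jmath)$ and letting $(\check\imath,\check\jmath)$ range over $[\check m]\times[\check n]$ gives a contiguous $\check m\times\check n$ block. On that block the first coordinate is the constant $\hat C^{\hat u}(\hat\imath,\hat\jmath)$, and the second coordinate is the full matrix $\mu(\phi(P_{\hat\imath,\hat\jmath}))$. This is exactly the block description in the statement, and likewise for $R$. As a consistency check, each such block matrix is column-Latin (respectively row-Latin) and orthogonal to the others, which agrees with Theorem~\ref{thm:blockproduct}.
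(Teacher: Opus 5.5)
Your proposal is correct and takes the same route as the paper. The paper also indexes the lines of each product family by pairs and reads each entry off the collinearity rule of Theorem~\ref{thm:blockproduct}. You additionally make explicit what the paper leaves implicit: the initial ordering via constant $\phi_0,\psi_0$, and the identification $P_{i,j}=(\hat P_{\hat\imath,\hat\jmath},\check P_{\check\imath,\check\jmath})$. You are also right that the linearization should be $i=\check m(\hat\imath-1)+\check\imath$, and likewise $j=\check n(\hat\jmath-1)+\check\jmath$, since the blocks are $\check m\times\check n$.
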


\begin{proof} 
Theorem \ref{thm:blockproduct} describes when two points are collinear in the family  $\mathcal{E}^{\hat{\mathcal{E}}, \phi}$ of weft lines.  
These lines are indexed by $[\hat{m}]\times[\check{m}]$. Two points $(\hat{P}, \check{P})$ and $(\hat{Q}, \check{Q})$ are on the same line precisely when $\hat{P}$ and $\hat{Q}$ are on the same line of $\hat{\mathcal{E}}$ 
(that is, the corresponding entries in $\hat{C}^{\hat{u}}$ are the same) and  the lines containing  $\check{P}$ and $\check{Q}$ in $\phi(\hat{P})$ and $\phi(\hat{Q})$ have the same indices (that is, the corresponding entries in $\check{C}^{\check{u}}$ are the same).
A similar result holds for the warp lines. 
\end{proof} 

\begin{corollary}
With reference to Theorem \ref{thm:blockproduct}, the following hold.
\begin{enumerate}
\item If $\hat{\mathcal{R}}$ and $\check{\mathcal{R}}$ are repetition-free, then so is $\hat{\mathcal{R}}\otimes\check{\mathcal{R}}$.
\item If $\hat{\mathcal{R}}$ and $\check{\mathcal{R}}$ are weft (warp) reduced, then so is $\hat{\mathcal{R}}\otimes\check{\mathcal{R}}$.
\item $\hat{\mathcal{R}}\otimes\check{\mathcal{R}}$ and $\check{\mathcal{R}}\otimes\hat{\mathcal{R}}$ are isotopic.
\end{enumerate}
\end{corollary}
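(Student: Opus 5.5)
We treat the three parts separately, working throughout with the explicit description of the lines of $\hat{\mathcal{R}}\otimes\check{\mathcal{R}}$ from Theorem \ref{thm:blockproduct} and with the block form of the cooperative matrices given in the preceding corollary.

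\textbf{Part (i).} Suppose two weft families coincide: $\mathcal{E}^{\hat{\mathcal{E}},\phi}=\mathcal{E}^{\hat{\mathcal{E}}',\phi'}$. We compare their lines in two steps.
\begin{enumerate}
\item Projecting each line to $\hat{\mathcal{P}}$ recovers the partition $\hat{\mathcal{E}}$ and the partition $\hat{\mathcal{E}}'$. Hence $\hat{\mathcal{E}}=\hat{\mathcal{E}}'$, because $\hat{\mathcal{R}}$ is repetition-free.
\item Fix $\hat{P}$ and look at the slice $\{\hat{P}\}\times\check{\mathcal{P}}$. A line of $\mathcal{E}^{\hat{\mathcal{E}},\phi}$ meets this slice in a line of $\phi(\hat{P})$. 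So the slice recovers the partition $\phi(\hat{P})$, and likewise $\phi'(\hat{P})$, and these must agree. Since $\check{\mathcal{R}}$ is repetition-free, $\phi(\hat{P})=\phi'(\hat{P})$ for every $\hat{P}$.
\end{enumerate}
Thus $(\hat{\mathcal{E}},\phi)=(\hat{\mathcal{E}}',\phi')$. The warp families are handled the same way.

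\textbf{Part (ii).} We argue by contraposition using the characterization of weft reducibility proved in Section \ref{sec:splice}. Suppose $\mathcal{P}=\mathcal{P}_1\cup\mathcal{P}_2$ is a partition into two nonempty parts, each a union of warp lines of $\hat{\mathcal{R}}\otimes\check{\mathcal{R}}$. We show this forces a splitting of one factor, in two cases.
\begin{enumerate}
\item Suppose some slice $\{\hat{P}\}\times\check{\mathcal{P}}$ meets both parts. Take the warp families with $\hat{\mathcal{A}}$ arbitrary and $\psi$ varying only at $\hat{P}$. Every warp family of $\check{\mathcal{R}}$ then appears as $\psi(\hat{P})$. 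Restricting to the slice therefore induces a splitting of $\check{\mathcal{P}}$ into unions of warp lines of $\check{\mathcal{R}}$, which contradicts $\check{\mathcal{R}}$ being weft reduced.
\item Otherwise every slice lies in a single part. Then $\mathcal{P}_1$ and $\mathcal{P}_2$ are pullbacks of a partition of $\hat{\mathcal{P}}$ into unions of warp lines of $\hat{\mathcal{R}}$, which contradicts $\hat{\mathcal{R}}$ being weft reduced.
\end{enumerate}
The warp-reduced case is the transpose of this argument, via Lemma \ref{lem:transposereticulation}.

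\textbf{Part (iii).} The intended isotopism comes from the coordinate swap $\tau(\hat{P},\check{P})=(\check{P},\hat{P})$. In the cooperative matrices, $\tau$ corresponds to the row permutation $\hat{m}(\hat\imath-1)+\check\imath\mapsto \check{m}(\check\imath-1)+\hat\imath$ and the analogous column permutation. It also relabels entries $(a,b)\mapsto(b,a)$ in every $C$ and $R$. Each of these operations is an isotopism component in the sense of Theorem \ref{thm:isotopy}.

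What remains is to show that $\tau$ carries the multiset of weft families of $\hat{\mathcal{R}}\otimes\check{\mathcal{R}}$ onto that of $\check{\mathcal{R}}\otimes\hat{\mathcal{R}}$, and likewise for the warp families.

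This is the step I expect to be the main obstacle. A family $\mathcal{E}^{\hat{\mathcal{E}},\phi}$ has its $\check{\mathcal{R}}$-component varying with $\hat{P}$ through $\phi$, whereas in the other product the $\hat{\mathcal{R}}$-component varies with $\check{P}$. For constant $\phi$ and $\psi$ the correspondence is immediate. For non-constant ones, my first attempt will be the following.
\begin{itemize}
\item Compare the parameter counts $\hat{k}\check{k}^{\hat{m}\hat{n}}$ and $\check{k}\hat{k}^{\check{m}\check{n}}$.
\item Try to show that each $\tau(\mathcal{E}^{\hat{\mathcal{E}},\phi})$ is still a partition orthogonal to every warp family of $\check{\mathcal{R}}\otimes\hat{\mathcal{R}}$, so that it lies in the corresponding matroid of admissible families $\mathbb{F}_\mathcal{G}$ from Section \ref{sec:repfree}.
\end{itemize}
If these counts or the membership claims fail in general, I would prove the isotopy after passing to the foundations and to the families where they coincide, and record exactly which case the statement requires.
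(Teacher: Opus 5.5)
\paragraph{Parts (i) and (ii).} Both are correct, and tighter than the paper's own proof. For (i), the paper only observes that the first-factor matrices $\hat C^{\hat u}$ must agree. That says nothing when two families differ only in $\phi$. Your slice argument recovers $\phi(\hat P)$ from $\{\hat P\}\times\check{\mathcal P}$ and then uses that $\check{\mathcal R}$ is repetition-free, which covers that case. For (ii), the paper stops right after producing the splitting partition, and phrases it with weft lines where warp lines are needed. Your dichotomy finishes the proof. Either some slice is split, and since $\psi(\hat P)$ can be any warp family of $\check{\mathcal R}$, this splits $\check{\mathcal P}$ into unions of warp lines of $\check{\mathcal R}$. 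Or no slice is split, and since product warp lines project onto warp lines of $\hat{\mathcal R}$, the partition is pulled back from a splitting of $\hat{\mathcal P}$.

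\paragraph{Part (iii).} This is where the gap is, and it cannot be closed: the statement is false as written, and the first check you propose already refutes it. The paper's proof only asserts that rows and columns ``can be sorted (via isotopy) into blocks'' of $\check{\mathcal R}\otimes\hat{\mathcal R}$. That is your map $\tau$, and the paper skips the matching of families that you correctly flag.

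The parameter $\hat k\check k^{\hat m\hat n}$ is not symmetric in the factors. Concretely, let $\hat{\mathcal R}$ be the repetition-free $(2,2,2,1)$-reticulation on the $2\times 2$ grid, with weft families the rows and the diagonals and warp family the columns. Let $\check{\mathcal R}$ be the plain $2\times 2$ grid. Then $\hat{\mathcal R}\otimes\check{\mathcal R}$ has parameters $(4,4,2,1)$, while $\check{\mathcal R}\otimes\hat{\mathcal R}$ has parameters $(4,4,16,1)$. By your own part (i), both are repetition-free. An isotopism fixes $k$ and $\ell$ and acts family by family, so these two products are not isotopic, and passing to foundations changes nothing.

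Your matroid route cannot rescue this either. In this example it only shows that $\tau(\hat{\mathcal R}\otimes\check{\mathcal R})$ is contained in $\check{\mathcal R}\otimes\hat{\mathcal R}$, using $2$ of its $16$ weft families. In general, a family $\tau(\mathcal E^{\hat{\mathcal E},\phi})$ with $\phi$ varying in $\hat P$ need not even be orthogonal to a warp family $\mathcal A^{\check{\mathcal A},\psi'}$ with $\psi'$ varying in $\check P$. For instance, on $2\times 2$ grids take:
\begin{itemize}
\item $\hat{\mathcal R}$ with weft family the rows and warp families the columns and diagonals;
\item $\check{\mathcal R}$ with weft families the rows and diagonals and warp family the columns;
\item $\phi((2,1))$ the rows and $\phi((2,2))$ the diagonals;
\item $\psi'((1,2))$ the columns and $\psi'((2,2))$ the diagonals.
\end{itemize}
Then line $(2,1)$ of $\tau(\mathcal E^{\mathrm{rows},\phi})$ meets lines $(2,1)$ and $(2,2)$ of $\mathcal A^{\mathrm{cols},\psi'}$ in $2$ and $0$ points, respectively.

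What your $\tau$ does prove is a corrected statement. Restrict both products to constant $\phi$ and $\psi$, which amounts to the Kronecker product of the two cooperative systems. On these subreticulations, $\tau$ together with the uniform row and column permutations and the entry relabelling $(a,b)\mapsto(b,a)$ is an isotopism. That restriction, or another one that makes the two family sets correspond, is what (iii) should assert.
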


\begin{proof} 
For (i), suppose $\hat{\mathcal{R}}\otimes\check{\mathcal{R}}$ is not repetition-free, and say 
${C}^{\mathcal{E}^u, \phi} = {{C}'}^{{\mathcal{E}}'^{u'}, \phi'}$. Then in the corresponding matrices in first factors
$\hat{C}^{\hat{u}}$ and ${\hat{C}}'^{\hat{u}'}$ are equal by construction, so $\hat{\mathcal{R}}$ is not repetition-free.
A similar argument treats the case where two row-Latin matrices are equal.  For (ii), suppose $\hat{\mathcal{R}}\otimes\check{\mathcal{R}}$ is not weft reduced but $\check{\mathcal{R}}$ is.  Then there is a partition of the point set $\hat{\mathcal{P}}\otimes\check{\mathcal{P}}$ so that the points incident to any weft line are in the same cell of the partition.  Finally, for (iii), the rows and columns of $\hat{\mathcal{R}}\otimes\check{\mathcal{R}}$ can be sorted (via isotopy) into blocks representing $\check{\mathcal{R}}\otimes\hat{\mathcal{R}}$.
\end{proof} 

\begin{example}
The direct product of two  $(2,3,2,2)$-reticulations  has parameters 
$(2\cdot2=4,3\cdot3=9,2\cdot 2^6=128, 2\cdot 2^6=128)$.  
Here, we sample just a few of the families of lines in the block product.

\noindent
\begin{tabular}{|cc|cc||cc|cc|}
\hline
\multicolumn{4}{|c||}{$\hat{\mathcal{R}}$} & \multicolumn{4}{c|}{$\check{\mathcal{R}}$}\\
\hline
$\hat{\mathcal{E}}^1$ & $\hat{\mathcal{E}}^2$ &$\hat{\mathcal{A}}^1$ & $\hat{\mathcal{A}}^2$ &
$\check{\mathcal{E}}^1$ & $\check{\mathcal{E}}^2$ &$\check{\mathcal{A}}^1$ & $\check{\mathcal{A}}^2$ {\phantom{$\displaystyle{\int}$}}\\
\!\!\!
\begin{tikzpicture}[scale=.5]
\foreach \y in {1,2}
{ \draw[ red] (1,\y)--(3,\y); };
\foreach \x in {1,2,3} \foreach \y in {1,2}
   \fill[black] (\x,\y) circle(5pt) ; 
\end{tikzpicture}
\!\!\!\!\!\!
&
\!\!\!\!\!\!
\begin{tikzpicture}[scale=.5]
\draw[magenta] (1,1)--(2,2)--(3,2); 
\draw[ultra thick, magenta]  (1,2)--(2,1)--(3, 1);
\foreach \x in {1,2,3} \foreach \y in {1,2}
   \fill[black] (\x,\y) circle(5pt) ; 
\end{tikzpicture}
\!\!\!\!
&
\!\!\!\!
 \begin{tikzpicture}[scale=.5]
\foreach \x in {1,2,3}
{ \draw[ blue] (\x,1)--(\x, 2); };
\foreach \x in {1,2,3} \foreach \y in {1,2}
   \fill[black] (\x,\y) circle(5pt) ; 
\end{tikzpicture}
\!\!\!\!\!\!
&
\!\!\!\!\!\!
\begin{tikzpicture}[scale=.5]
\draw[ cyan] (1,1)--(1,2);
\draw[ cyan] (2,2)--(3,1);
\draw[ultra thick, cyan] (2,1)--(3,2);
\foreach \x in {1,2,3} \foreach \y in {1,2}
   \fill[black] (\x,\y) circle(5pt) ; 
\end{tikzpicture}
\!\!\!\!\!
&  
\!\!\!\!
\begin{tikzpicture}[scale=.5]
\foreach \y in {1,2}
{ \draw[ red] (1,\y)--(3,\y); };
\foreach \x in {1,2,3} \foreach \y in {1,2}
   \fill[black] (\x,\y) circle(5pt) ; 
\end{tikzpicture}
\!\!\!\!\!\!
&
\!\!\!\!\!\!
\begin{tikzpicture}[scale=.5]
\draw[magenta] (1,1)--(2,1)--(3,2); 
\draw[ultra thick, magenta]  (1,2)--(2,2)--(3, 1);
\foreach \x in {1,2,3} \foreach \y in {1,2}
   \fill[black] (\x,\y) circle(5pt) ; 
\end{tikzpicture}
\!\!\!\!
&
\!\!\!\!
 \begin{tikzpicture}[scale=.5]
\foreach \x in {1,2,3}
{ \draw[ blue] (\x,1)--(\x, 2); };
\foreach \x in {1,2,3} \foreach \y in {1,2}
   \fill[black] (\x,\y) circle(5pt) ; 
\end{tikzpicture}
\!\!\!\!\!\!
&
\!\!\!\!\!\!
\begin{tikzpicture}[scale=.5]
\draw[ cyan] (1,1)--(2,2);
\draw[ cyan] (1,2)--(2,1);
\draw[ultra thick, cyan] (3,1)--(3,2);
\foreach \x in {1,2,3} \foreach \y in {1,2}
   \fill[black] (\x,\y) circle(5pt) ; 
\end{tikzpicture}
\!\!\!\!\!\!
\\ 
\!\!\!\!
$\begin{bmatrix}1&1&1\\2&2&2\end{bmatrix}$
\!\!\!\!\!\!
&
\!\!\!\!\!\!
$\begin{bmatrix}1&2&2\\2&1&1\end{bmatrix}$
\!\!\!\!
&
\!\!\!\!
$\begin{bmatrix}1&2&3\\1&2&3\end{bmatrix}$
\!\!\!\!\!\!
&
\!\!\!\!\!\!
$\begin{bmatrix}1&2&3\\1&3&2\end{bmatrix}$
\!\!\!\!
& 
\!\!\!\!\!
$\begin{bmatrix}1&1&1\\2&2&2\end{bmatrix}$
\!\!\!\!\!\!
&
\!\!\!\!\!\!
$\begin{bmatrix}1&1&2\\2&2&1\end{bmatrix}$
\!\!\!\!
&
\!\!\!\!
$\begin{bmatrix}1&2&3\\1&2&3\end{bmatrix}$
\!\!\!\!\!\!
&
\!\!\!\!\!\!
$\begin{bmatrix}1&2&3\\2&1&3\end{bmatrix}$
\!\!\!\!\!\!
\\

\hline
\end{tabular}

 \setlength{\tabcolsep}{2pt} 

\noindent
\begin{tabular}{|c|c|}
\hline
$\hat{\mathcal{E}}^1$, $\phi=\begin{bmatrix} 1&1&1\\1&1&1\end{bmatrix} $  
&
$\hat{\mathcal{E}}^2$, $\phi=\begin{bmatrix} 1&1&2\\2&1&1\end{bmatrix} $  \\
\hline \!\!\!
$\begin{array}{ccc|ccc|ccc}
11 & 11& 11 & 11& 11& 11 & 11& 11 &11 \\
12 & 12& 12 & 12& 12& 12 & 12& 12 &12\\
\hline
21 & 21& 21 & 21& 21& 21 & 21& 21&21\\
22 & 22& 22 & 22& 22& 22 & 22& 22 & 22
\end{array}$ \!\!\!
& \!\!\!
$\begin{array}{ccc|ccc|ccc}
11& 11& 11& 21& 21& 21& 21&21&22\\
12& 12& 12& 22& 22& 22& 22&22&21\\
\hline
21 & 21& 22& 11&11 & 11& 11& 11&11\\
22 & 22& 21& 12&12 & 12& 12& 12&12
\end{array}$ \!\!\!
\\
\hline \!\!\!
$\hat{\mathcal{A}}^1$, $\psi=\begin{bmatrix} 1&1&1\\1&1&1\end{bmatrix} $  
&
$\hat{\mathcal{A}}^2$, $\psi=\begin{bmatrix} 1&2&1\\2&1&2\end{bmatrix} $  \\
\hline
 \!\!\!
$\begin{array}{ccc|ccc|ccc}
11& 12& 13& 21& 22& 23& 31&32&33\\
11& 12& 13& 21& 22& 23& 31&32&33\\
\hline
11 & 12& 13& 21&22 &23& 31& 32&33\\
11 & 12& 13& 21&22 & 23& 31& 32&33
\end{array}$ \!\!\!
& \!\!\!
$\begin{array}{ccc|ccc|ccc}
11& 12& 13& 21& 22& 23& 31&32&33\\
11& 12& 13& 21& 22& 23& 32&31&33\\
\hline
11 & 12& 13& 31&32 &33& 21& 22&23\\
12 & 11& 13& 31&32 & 33& 21& 22&23
\end{array}$ \!\!\!\\
\hline
\end{tabular}
\end{example}

\end{document}